\renewcommand{\mathbb}{\mathbf}
\newcommand{\triv}{\mathrm{triv}}
\newcommand{\red}{\operatorname{red}}
\newcommand{\WBT}{W^{\operatorname{BT}}}
\newcommand{\BT}{\operatorname{BT}}
\newcommand{\ocZ}{\overline{\cZ}}
\newcommand{\cZbar}{\overline{\cZ}}
\newcommand{\cTbar}{\overline{\cT}}
\newcommand{\Rbarinfty}{{\bar{R}_\infty}}
\newcommand{\JH}{\operatorname{JH}}
 \newcommand{\sigmabar   }{\overline{\sigma}}   
\def\iso{\buildrel \sim \over \longrightarrow}
\newtheorem{thm}[subsection]{Theorem}
\newtheorem{lemma}[subsection]{Lemma}
\newtheorem{lem}[subsection]{Lemma}
\newtheorem{cor}[subsection]{Corollary}
\newtheorem{prop}[subsection]{Proposition}
\newtheorem{alemma}[subsection]{Lemma}
\newtheorem{aprop}[subsection]{Proposition}
\theoremstyle{definition}
\theoremstyle{remark}
\newtheorem{remark}[subsection]{Remark}
\newtheorem{rem}[subsection]{Remark}
\def\numequation{\addtocounter{subsubsection}{1}\begin{equation}}
\def\nummultline{\addtocounter{subsubsection}{1}\begin{multline}}
\def\anumequation{\addtocounter{subsection}{1}\begin{equation}}
\def\anummultline{\addtocounter{subsection}{1}\begin{multline}}
\renewcommand{\theequation}{\arabic{section}.\arabic{subsection}.\arabic{subsubsection}}
\newif\iffinalrun
  \newcommand{\need}[1]{}
  \newcommand{\mar}[1]{}
  \newcommand{\need}[1]{{\tiny *** #1}}
  \newcommand{\mar}[1]{\marginpar{\raggedright\tiny fixme #1}}
\newcommand{\F}{\FF}
\newcommand{\Q}{\QQ}
\newcommand{\Z}{\ZZ}
\newcommand{\p}{\frakp}
\newcommand{\FF}{{\mathbb F}}
\newcommand{\QQ}{{\mathbb Q}}
\newcommand{\ZZ}{{\mathbb Z}}
\renewcommand{\bf}{\ensuremath{\mathbf{f}}}
\newcommand{\cC}{{\mathcal C}}
\newcommand{\cF}{{\mathcal F}}
\newcommand{\cO}{{\mathcal O}}
\newcommand{\cP}{{\mathcal P}}
\newcommand{\cR}{{\mathcal R}}
\newcommand{\cT}{{\mathcal T}}
\newcommand{\cU}{{\mathcal U}}
\newcommand{\cV}{{\mathcal V}}
\newcommand{\cW}{{\mathcal W}}
\newcommand{\cX}{{\mathcal X}}
\newcommand{\cY}{{\mathcal Y}}
\newcommand{\cZ}{{\mathcal Z}}
\newcommand{\frakp}{\mathfrak{p}}
\newcommand{\tB}{\mathrm{B}}
\newcommand{\Fbar}{\overline{\F}}
\newcommand{\Qbar}{\overline{\Q}}
\newcommand{\Zbar}{\overline{\Z}}
\newcommand{\Fp}{\F_p}
\newcommand{\Fpbar}{\Fbar_p}
\newcommand{\Fpbarx}{\Fpbar^{\times}}
\newcommand{\Zpbar}{\Zbar_p}
\newcommand{\Qp}{\Q_p}
\newcommand{\Qpbar}{\Qbar_p}
\DeclareMathOperator{\Ext}{Ext}
\DeclareMathOperator{\Gal}{Gal}
\DeclareMathOperator{\GL}{GL}
\DeclareMathOperator{\Spec}{Spec}
\DeclareMathOperator{\Spf}{Spf}
\DeclareMathOperator{\Sym}{Sym}
\newcommand{\cris}{\mathrm{cris}}
\newcommand{\St}{\mathrm{St}}
\newcommand{\Bcris}{\tB_{\cris}}
\newcommand{\Dpcris}{\operatorname{D_{pcris}}}
\newcommand{\rhobar}{\overline{\rho}}
\newcommand{\dirlim}{\varinjlim}
\newcommand{\into}{\hookrightarrow}
\newcommand{\toisom}{\buildrel\sim\over\to}
\newcommand{\dd}{\mathrm{dd}}
\newcommand{\rbar}{\overline{r}}
\newcommand{\st}{\mathrm{ss}}
\newcommand{\cegsBtresramlemma}{\cite[Lem.~A.5]{cegsB}}
\newcommand{\cegsBdisjointunion}{\cite[Cor.~4.2.13]{cegsB}}
\newcommand{\cegsBlastsection}{\cite[Sec.~5.2]{cegsB}}
\newcommand{\cegsBCmainresultsplus}{\cite[Cor.~3.1.8, Cor.~4.5.3, Prop.~5.2.21]{cegsB}}
\newcommand{\cegsBZmainresultsplus}{\cite[Thm.~5.1.2, Prop.~5.1.4, Lem.~5.1.8, Prop.~5.2.20]{cegsB}}
\newcommand{\cegsBversalprop}{\cite[Prop.~5.2.19]{cegsB}}
\newcommand{\cegsCrecollections}{\cite[Sec.~2.3]{cegsC}}
\newcommand{\cegsCnotation}{\cite[Sec.~1.2]{cegsC}}
\newcommand{\cegsCcomponentdescription}{\cite[Prop.~5.1.13, Thm.~5.1.17, Cor.~5.3.3, Thm.~5.4.3]{cegsC}}
\newcommand{\cegsCbigfamilies}{\cite[Thm.~5.1.17]{cegsC}}
\newcommand{\cegsCverticalcomponents}{\cite[Thm.~5.1.12]{cegsC}}
\newcommand{\cegsCextensionssec}{\cite[Sec.~4.2.7]{cegsC}}
\newcommand{\cegsBfieldchange}{\cite[Prop.~4.3.1(2)]{cegsB}}
\newcommand{\cegsCmodeldefn}{\cite[Def.~5.1.16]{cegsC}}
\begin{document}
\selectlanguage{english}
\title [The geometric  Breuil--M\'ezard conjecture]{The geometric Breuil--M\'ezard conjecture for  two-dimensional
potentially Barsotti--Tate Galois representations}

\author[A. Caraiani]{Ana Caraiani}\email{a.caraiani@imperial.ac.uk}
\address{Department of Mathematics, Imperial College London, London SW7 2AZ, UK}

\author[M. Emerton]{Matthew Emerton}\email{emerton@math.uchicago.edu}
\address{Department of Mathematics, University of Chicago,
5734 S.\ University Ave., Chicago, IL 60637, USA}

\author[T. Gee]{Toby Gee} \email{toby.gee@imperial.ac.uk} \address{Department of
  Mathematics, Imperial College London,
  London SW7 2AZ, UK}

\author[D. Savitt]{David Savitt} \email{savitt@math.jhu.edu}
\address{Department of Mathematics, Johns Hopkins University}
\thanks{The first author was supported in part by NSF grant DMS-1501064, 
  by a Royal Society University Research Fellowship,
  and by ERC Starting Grant 804176. The second author was supported in part by the
  NSF grants DMS-1303450, DMS-1601871,
  DMS-1902307, DMS-1952705, and DMS-2201242. 
 The third author was 
  supported in part by a Leverhulme Prize, EPSRC grant EP/L025485/1, Marie Curie Career
  Integration Grant 303605, 
  ERC Starting Grant 306326, and a Royal Society Wolfson Research
  Merit Award. The fourth author was supported in part by NSF CAREER
  grant DMS-1564367 and  NSF grant 
  DMS-1702161.}

\begin{abstract}
 We establish a geometrisation of the Breuil--M\'ezard conjecture for 
 potentially Barsotti--Tate representations, 
 as well as of the weight part of Serre's conjecture, for 
 moduli stacks of two-dimensional mod~$p$ representations of the
 absolute Galois group of a $p$-adic local field. These results are first proved
for the stacks of \cite{cegsB,cegsC}, and then transferred to the stacks of \cite{EGmoduli}
 by means of a comparison of versal rings.
\end{abstract}
\maketitle

\selectlanguage{english}

\setcounter{tocdepth}{1}
\tableofcontents

\section{Introduction} 

 Let~$K/\Qp$ be a finite extension with residue field
$k$,
let~$\overline{K}$ be an algebraic closure of~$K$, and let $d \ge 1$
be a positive integer. Two of us (M.E.\ and T.G.\ \cite{EGmoduli}) have
constructed moduli stacks of representations of the absolute Galois group
$G_K := \Gal(\overline{K}/K)$, globalizing Mazur's classical deformation
theory of Galois representations. These stacks are expected to be the backbone of
 a categorical $p$-adic Langlands correspondence, playing
the role anticipated by the stacks of \cite{DHKM,ZhuStacks} in the $\ell \neq p$ setting.

To be precise, the book \cite{EGmoduli} defines the
category $\mathcal{X}_d$ fibred in groupoids over $\Spf \Z_p$ whose
$A$-valued points, for any $p$-adically complete $\Z_p$-algebra $A$,
are the groupoid of rank $d$ projective \'etale
$(\varphi,\Gamma)$-modules with $A$-coefficients. Then the finite type
points of $\mathcal{X}_d$ correspond to representations $\rbar : G_K
\to \GL_d(\Fpbar)$, the versal rings of $\mathcal{X}_d$ at finite type points recover classical
Galois deformation rings, and one has the following, which is one of
the main results of \cite{EGmoduli}.

\begin{thm}{\cite[Thm.~1.2.1]{EGmoduli}}\label{thm:intro-thm-EG}
Each $\mathcal{X}_d$ is a Noetherian formal algebraic stack. Its underlying
reduced substack $\mathcal{X}_{d,\red}$ is an algebraic stack of
finite type over~$\Fp$, and is equidimensional of dimension $[K:\Qp]
\binom{d}{2}$. The irreducible components of $\mathcal{X}_{d,\red}$
have a natural bijective labeling by  Serre weights.   
\end{thm}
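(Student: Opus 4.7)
The plan is to realise $\mathcal{X}_d$ as an increasing union of images of finite-type algebraic stacks arising from integral $p$-adic Hodge theory, and then to extract each assertion from a careful analysis of those auxiliary stacks. Concretely, for each $h \ge 0$ I would introduce the moduli stack $\mathcal{C}^{\le h}_d$ of rank $d$ Breuil--Kisin modules of $E(u)$-height at most $h$ over $\fS_A = W(k)\llbracket u\rrbracket \cotimes_{\Z_p} A$, equipped with the descent datum needed to produce an \'etale $(\varphi,\Gamma)$-module after inverting $u$. Standard affine-Grassmannian arguments (after Pappas--Rapoport and Kisin) show that each $\mathcal{C}^{\le h}_d$ is a quasi-compact algebraic stack of finite presentation over $\Spf \Z_p$. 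Inverting $u$ provides a morphism $\mathcal{C}^{\le h}_d \to \mathcal{X}_d$, and the union of the scheme-theoretic images exhausts $\mathcal{X}_d$ because every $\rbar$ admits a Breuil--Kisin model of bounded height. To deduce that $\mathcal{X}_d$ is Noetherian formal algebraic, I would identify the versal ring at each finite type point $\rbar$ with the classical framed Galois deformation ring $R_{\rbar}^\square$, which is Noetherian by Mazur's theorem; combined with the algebraicity of the $\mathcal{C}^{\le h}_d$ and a verification of Artin's criteria, this produces the formal-algebraic structure.

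For the dimension of $\mathcal{X}_{d,\red}$, I would restrict to the reduced stacks $\mathcal{C}^{\le h}_{d,\red}$ over $\Fp$. The key facts are (i) that the map $\mathcal{C}^{\le h}_{d,\red} \to \mathcal{X}_{d,\red}$ is proper (inherited from properness of affine flag varieties) and generically a bijection on points of each irreducible component (reflecting the generic uniqueness of the Breuil--Kisin lattice inside an \'etale $(\varphi,\Gamma)$-module), and (ii) that $\mathcal{C}^{\le h}_{d,\red}$ admits a stratification by the Kottwitz--Rapoport type of the Frobenius matrix. On the open stratum where the Frobenius matrix is generically upper-triangular, a tangent-space computation using $\Ext^1$ between characters together with the local Euler--Poincar\'e formula yields $[K:\Qp]$ deformation parameters for each of the $\binom{d}{2}$ strictly above-diagonal entries, giving a total of $[K:\Qp]\binom{d}{2}$. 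One then checks that no other Kottwitz--Rapoport stratum contributes a larger component, and that the upper-triangular locus is dense in each maximal component, yielding equidimensionality.

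Finally, to obtain the bijection between irreducible components and Serre weights, I would associate to each Serre weight $\sigma$ a tame inertial type $\tau(\sigma)$ together with a tuple of Hodge--Tate weights in $\{0,1\}^{\Hom(k,\Fpbar)}$, and consider the locally closed substack $\mathcal{X}_d^{\sigma} \subseteq \mathcal{X}_d$ parameterising potentially Barsotti--Tate representations of type $\tau(\sigma)$ with the prescribed weights. For each $\sigma$ I would construct an explicit family of Breuil--Kisin modules realising $\sigma$, show that the closure of its image is irreducible of dimension exactly $[K:\Qp]\binom{d}{2}$, and verify that distinct $\sigma$ give distinct components. The main obstacle---and this is essentially the weight part of Serre's conjecture in disguise---is to show that the map $\sigma \mapsto \overline{\mathcal{X}_d^\sigma}$ is surjective on irreducible components: this requires matching a count (obtained via dimension bounds on Kottwitz--Rapoport strata) with the known count of Serre weights, and pinning down which weight corresponds to which component via analysis of the generic reduction of potentially crystalline Galois representations.
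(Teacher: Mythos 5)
This statement is not proved in the paper at all: it is Theorem 1.2.1 of \cite{EGmoduli}, quoted verbatim, and the paper's ``proof'' is the citation. So what you are proposing is to re-derive the main structural results of that book, and your sketch, while pointing in broadly the right direction (bounded-height Breuil--Kisin-type stacks, scheme-theoretic images, versal rings equal to deformation rings), elides exactly the points that make this a book-length argument. Two concrete issues. First, the passage from Breuil--Kisin modules to \'etale $(\varphi,\Gamma)$-modules is not just ``inverting $u$'': a Breuil--Kisin module only yields a $\varphi$-module, and equipping the generic fibre with its canonical $G_K$- (or $\Gamma$-) action forces one to work with Breuil--Kisin--Fargues modules satisfying a descent condition; constructing that morphism and proving the algebraicity and Noetherian formal algebraic structure of the image is the technical core of \cite{EGmoduli} (via the scheme-theoretic image machinery of \cite{EGstacktheoreticimages}), and it is not delivered by ``identify versal rings with $R^{\square}_{\rbar}$ and verify Artin's criteria'' --- effective versality and the Ind-algebraic structure are precisely what has to be established before such an identification has content.

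Second, your labeling of irreducible components by Serre weights is not the one that exists, and as stated it cannot work for $d>2$: Serre weights for $\GL_d(k)$ do not correspond to potentially Barsotti--Tate types with Hodge--Tate weights in $\{0,1\}$, and the substacks $\cX_d^{\lambda,\tau}$ live in the flat part of $\cX_d$, whereas the components of $\cX_{d,\red}$ are genuinely mod~$p$ objects. The labeling in \cite{EGmoduli} (recalled in the introduction of the present paper) is instead via a dense set of $\Fpbar$-points on each component which are successive extensions of characters, as nonsplit as possible, the tame inertial weights of the characters together with peu/tr\`es ramifi\'ee data giving the weight. Relatedly, your plan to ``match a count of components with the count of Serre weights'' presupposes knowing the number of irreducible components, which is essentially the assertion being proved; likewise the equidimensionality claim (``no other Kottwitz--Rapoport stratum contributes a larger component'', density of the upper-triangular locus) is asserted rather than argued, and a tangent-space count at points of one stratum does not bound the dimension of the whole reduced stack. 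If you want this statement in a paper of the present kind, the correct move is the one the authors make: cite \cite{EGmoduli}.
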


Recall that a \emph{Serre weight} in this context is an irreducible
$\Fpbar$-representation of $\GL_d(k)$ (or rather an
isomorphism class thereof). The description in \cite{EGmoduli}  of
the labeling of  components of
$\mathcal{X}_{d,\red}$ by Serre weights  is to some extent combinatorial. Namely, it is shown that each irreducible component of
$\mathcal{X}_{d,\red}$ has a dense set of $\Fpbar$-points
which are successive extensions of characters, with extensions as
non-split as possible.  The restrictions of these characters to the
inertia group yield discrete data (their tame inertia weights) which,
together with some further information about peu and tr\`es ramifiee
extensions, amounts precisely to the data of (a highest weight of)
a Serre weight.

It is expected, however, that there is another  description of the irreducible components
of $\mathcal{X}_{d,\red}$ that is  more precise and more informative from the perspective of the
$p$-adic Langlands program. The weight part of Serre's conjecture, as
described for instance in \cite[Sec.\ 3]{MR3871496},
associates to each $\rbar : G_K \to \GL_d(\Fpbar)$ a set of Serre
weights $W(\rbar)$. One expects for each Serre weight $\sigma$ that
there is a set of components of $\mathcal{X}_{d,\red}$, including the
irreducible component labelled by~$\sigma$ in~\cite{EGmoduli}, 
the union of whose $\Fpbar$-points are \emph{precisely} the representations $\rbar$ with $\sigma \in W(\rbar)$.
Equivalently, after adding additional
labels to some of the components (so that components will be labelled
by a set of weights, rather than a single weight), the set
$W(\rbar)$ is precisely the collection of labels of the various
components of $\mathcal{X}_{d,\red}$ on which $\rbar$ lies. 

One of the aims of this paper is to establish this expectation
in the case $d=2$, taking as input the weight part of Serre's conjecture for $\GL_2$ \cite{gls13} and the Breuil--M\'ezard conjecture for two-dimensional potentially Barsotti--Tate representations \cite{geekisin}, and thus obtain a description of
\emph{all} the finite type points of each irreducible component of 
$\mathcal{X}_{2,\red}$, as opposed to just a dense set
of points. Indeed we have the following theorem, which can be regarded
as a geometrisation of the weight part of Serre's conjecture for
$\GL_2$. If $\sigma$ is a Serre weight, let
$\mathcal{X}^{\sigma}_{d,\red}$ denote the irreducible component of
$\mathcal{X}_{d,\red}$ labelled by $\sigma$ in~\cite{EGmoduli}. (We refer the reader to Section~\ref{subsec: notation} for any unfamiliar notation or terminology in what follows.)

\begin{thm}\label{thm:component-thm} Suppose $p > 2$. For each Serre weight $\sigma$ we define a cycle $Z^{\sigma}$ as follows:
\begin{itemize}
  \item $Z^{\sigma} = \mathcal{X}_{2,\red}^\sigma$, if the weight $\sigma$ is not Steinberg, while
  \item $Z^{\chi\otimes \St} = \mathcal{X}_{2,\red}^\chi + \mathcal{X}_{2,\red}^{\chi \otimes \St}$ if the weight  $\sigma \cong \chi \otimes \St$ is Steinberg.
\end{itemize}
Then $\sigma \in W(\rbar)$ if and only if $\rbar$ lies in the support of $Z^{\sigma}$.

Indeed a stronger statement is true:\ the cycles $Z^{\sigma} = \mathcal{X}_{2,\red}^\sigma$ {\upshape(}for $\sigma$ non-Steinberg{\upshape)} and $Z^{\chi\otimes \St} = \mathcal{X}_{2,\red}^\chi + \mathcal{X}_{2,\red}^{\chi \otimes \St}$ constitute the cycles in a geometric version of the Breuil--M\'ezard conjecture {\upshape(}to be explained below{\upshape)}. 
\end{thm}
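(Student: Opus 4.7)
The plan is to follow the strategy indicated in the abstract: first establish a geometric Breuil--M\'ezard statement on the moduli stacks of \cite{cegsB, cegsC}, whose irreducible components are described combinatorially in \cegsCcomponentdescription, and then transfer the result to $\cX_{2,\red}$ via a comparison of versal rings. Working on the cegs stacks first is natural because those stacks arise directly as moduli of Breuil--Kisin modules with tame descent data, and for each tame inertial type $\tau$ admit a closed substack $\cZ(\tau)$ parametrising Breuil--Kisin modules of type $\tau$; the completions of these substacks at finite type points are precisely the potentially Barsotti--Tate deformation rings studied in the classical numerical Breuil--M\'ezard conjecture proved in~\cite{geekisin}.

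On the cegs stacks I would define candidate cycles (the analogues of $\cZ^\sigma$) by the same prescription as in the statement of the theorem, but using the irreducible components of the cegs stacks and their labelling from \cegsCcomponentdescription. The cycle-theoretic assertion to be proved is that, for each inertial type $\tau$, the underlying cycle $[\cZ(\tau)]$ decomposes as $\sum_\sigma n_\sigma(\tau)\, \cZ^\sigma$, where $n_\sigma(\tau)$ is the multiplicity of $\sigma$ as a Jordan--H\"older constituent of the mod-$p$ reduction of the $\GL_2(\O_K)$-representation $\sigma(\tau)$ attached to $\tau$ by inertial local Langlands.

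To prove this cycle decomposition I would argue locally at each $\Fbar_p$-point $\rbar$. The versal ring of $\cZ(\tau)$ at $\rbar$ is the potentially Barsotti--Tate deformation ring $R^{\mathrm{BT},\tau}_{\rbar}$. The numerical Breuil--M\'ezard conjecture of~\cite{geekisin} computes the Hilbert--Samuel multiplicity of its mod-$p$ fibre as $\sum_\sigma n_\sigma(\tau)\mu_\sigma(\rbar)$ for certain intrinsic multiplicities $\mu_\sigma(\rbar)$. Combining this numerical identity with the explicit description of the irreducible components and their intersection behaviour from \cegsCcomponentdescription, and varying $\tau$ across a family rich enough to separate individual Serre weights via the matrix $[n_\sigma(\tau)]$, yields the desired cycle identity on the cegs side. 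The Steinberg case dictates the amalgamation of $\cX_{2,\red}^\chi$ and $\cX_{2,\red}^{\chi\otimes\St}$ into a single cycle $\cZ^{\chi\otimes\St}$: potentially Barsotti--Tate deformation rings cannot distinguish the peu-ramifi\'ee versus tr\`es-ramifi\'ee data separating these two components on $\cX_{2,\red}$, so no inertial type alone pins down their individual multiplicities, and only their sum is forced by the cycle equality.

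Transfer to $\cX_{2,\red}$ then uses the comparison of versal rings between the cegs and EG stacks, which identifies local deformation-theoretic data on both sides and matches the two labellings of irreducible components. Since a cycle on a Noetherian reduced algebraic stack is determined by its restrictions to versal rings at finite type points, the cycle identity passes to $\cX_{2,\red}$. Finally, the set-theoretic part of the theorem is a formal consequence of the cycle statement combined with the weight part of Serre's conjecture of~\cite{gls13}: that result identifies $\sigma\in W(\rbar)$ with the non-vanishing of $\mu_\sigma(\rbar)$, which by the cycle decomposition is equivalent to $\rbar$ lying on $\cZ^\sigma$. I expect the main obstacle to be precisely the transfer step---the versal ring comparison together with the matching of component labellings---because the cegs labelling arises from explicit Breuil--Kisin data while the EG labelling is defined via tame inertia weights on generic families of extensions, and the identification must be canonical enough to preserve cycle-theoretic information on the nose.
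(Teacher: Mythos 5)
Your overall architecture (CEGS stacks first, then transfer) matches the paper's strategy in outline, but the two steps you lean on hardest are precisely where the actual argument has to be different, and as written they have genuine gaps. First, on the CEGS side, the numerical Breuil--M\'ezard theorem of \cite{geekisin} computes only Hilbert--Samuel multiplicities of $R^{\tau,\BT}_{\rbar}/\varpi$, and no amount of varying $\tau$ and inverting the matrix $[n_\sigma(\tau)]$ turns that numerical data into the cycle identity with the correct (multiplicity-one) coefficients: one needs to know that each irreducible component of $\Spec R^{\tau,\BT}_{\rbar}/\varpi$ occurs with multiplicity one. This is the content of Theorem~\ref{prop: generically reduced special fibre deformation ring} (generic reducedness of $\Spec R^{\tau,\BT}_{\rbar}/\varpi$), which is deduced from the reducedness of the special fibre of the Breuil--Kisin stack $\cC^{\tau,\BT,1}$ and the component analysis of \cite{cegsC}; it is the key new geometric input of the paper and does not appear in your sketch. (The cycle decomposition itself is obtained via the patched-module support cycles of \cite{emertongeerefinedBM}, not from the numerical statement alone.)

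Second, the transfer step cannot work as you describe. The CEGS stack $\cZ^{\dd,1}$ omits the tr\`es ramifi\'ee representations, so it has no components corresponding to Steinberg weights and cannot be matched component-by-component with $\cX_{2,\red}$; the paper explicitly declines to identify $\cX^{\sigma}_{2,\red}$ with $\cZ(\sigma)$ and points out that the earlier argument in \cite{EGmoduli} erred by implicitly assuming such an identification. The actual route takes as input the existence of the cycles $Z^{\sigma}$ for \emph{all} inertial types and for potentially semistable Hodge type $0$ (Theorem~\ref{thm:EG-BM}, proved in \cite{EGmoduli}), and then pins them down without matching labellings: support considerations plus Lemma~\ref{lem:one-rep-with-one-serre-weight} force $Z^{\sigma}$ to be a multiple of $\cX^{\sigma}_{2,\red}$ in the non-Steinberg case, and the multiplicity is bounded by $1$ by feeding the generic reducedness theorem through the versal ring $R^{\tau,\BT}_{\rbar}$ at a smooth point. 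For the Steinberg cycle your heuristic (``only the sum is forced'') is not a proof: the paper uses the Steinberg-type potentially semistable stack to identify the support of $Z^{\chi\otimes\St}$ inside $\cX^{\chi}_{2,\red}\cup\cX^{\chi\otimes\St}_{2,\red}$, and then computes both multiplicities to be exactly $1$ by explicit deformation-ring arguments (a crystalline/flat deformation ring at a suitable point of $\cX^{\chi}_{2,\red}$, and an ordinary, formally smooth, deformation ring at a tr\`es ramifi\'ee point). None of this is recoverable from a versal-ring comparison with the CEGS stacks, since those stacks simply do not see the tr\`es ramifi\'ee locus.
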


We emphasise that the existence of such a geometric interpretation of
the sets~$W(\rbar)$ is far from
obvious, and indeed we know of no direct proof using any of the
explicit descriptions of~$W(\rbar)$ in the literature; it seems hard to understand in any explicit way which Galois
representations arise as the limits of a family of extensions of given
characters, and the description of the sets~$W(\rbar)$ is
very complicated (for example, the description
in~\cite{bdj} relies on certain $\Ext$ groups of crystalline
characters). Our proof is indirect, and ultimately makes use of a
description of~$W(\rbar)$ given in~\cite{geekisin}, which is in terms
of potentially Barsotti--Tate deformation rings of~$\rbar$ and is
motivated by the Taylor--Wiles method. We interpret this description
in the geometric language of~\cite{emertongeerefinedBM}, which we 
in turn interpret as the formal completion of a ``geometric
Breuil--M\'ezard conjecture'' for our stacks.

The proof of Theorem~\ref{thm:component-thm} entwines the main results of the book \cite{EGmoduli} with the results of our papers \cite{cegsB,cegsC}. Indeed Theorem~\ref{thm:component-thm} is (more or less) stated at \cite[Thm.~8.6.2]{EGmoduli}, but the argument given there makes reference to (an earlier version of) this paper\footnote{The reference [CEGS19, Thm.~5.2.2] in \cite{EGmoduli} is Theorem~\ref{thm:stack version of geometric Breuil--Mezard} of this paper, while the reference [CEGS19, Lem.~B.5] in \cite{EGmoduli} is \cegsBtresramlemma.}.  We should therefore explain more precisely what are the contributions of this paper.

For each Hodge type $\lambda$ and inertial type $\tau$, the book \cite{EGmoduli} constructs a closed substack $\mathcal{X}_d^{\lambda,\tau}\subset \mathcal{X}_d$ parameterizing $d$-dimensional potentially crystalline representations of $G_K$ of Hodge type $\lambda$ and inertial type $\tau$. When $d=2$, $\lambda$ is trivial, and $\tau$ is tame, these are stacks of potentially Barsotti--Tate representations of type $\tau$, and we write $\mathcal{X}_2^{\tau,\BT}$ instead. 

The papers \cite{cegsB,cegsC} construct and study another stack $\cZ^{\dd}$ which can  be regarded as a stack of \emph{tamely} potentially Barsotti--Tate representations;\ as well as a closed substack $\cZ^{\tau} \subset \cZ^{\dd}$, for each tame type $\tau$, of potentially Barsotti--Tate representations of type $\tau$. 
Our stacks $\cZ^{\tau}$ are presumably\footnote{\emph{Added in revision}: In fact this has now been proved:\ see \cite[Thm~4.5]{APAW}.} isomorphic to the stacks
  $\mathcal{X}_2^{\tau,\BT}$, but literally they are different stacks, constructed differently:\ the stacks $\cZ^{\tau}$ are stacks of \'etale $\varphi$-modules with tame descent data, constructed by taking the scheme-theoretic image of a stack $\cC^{\tau,\BT}$ of Breuil--Kisin modules with tame descent data;\ whereas the $\mathcal{X}_2^{\tau,\BT}$ are stacks of \'etale $(\varphi,\Gamma)$-modules, constructed by taking the scheme-theoretic image of a stack of Breuil--Kisin--Fargues modules satisfying a descent condition.  In practice it seems to be easier to compute with the stacks $\cZ^{\tau}$ than the stacks $\mathcal{X}_2^{\tau,\BT}$.

The properties of $\cZ^{\dd}$ and $\cZ^{\tau}$ that we will use in this paper are recalled in detail in Section~\ref{sec:recollections}, but we mention two crucial properties now.
\begin{itemize}
  \item  It is proved in \cite{cegsB} by a local model argument that the special fibre of $\cC^{\tau,\BT}$ is reduced. As a consequence so is its scheme-theoretic image $\cZ^{\tau,1}$ in $\cZ^{\tau}$. The stack $\cZ^{\dd,1}$, the scheme-theoretic image in $\cZ^{\dd}$ of the special fibre of $\cC^{\dd,\BT}$, is similarly reduced.
  \item  It is shown in \cite{cegsC} that the irreducible components of $\cZ^{\tau,1}$ are in bijection with the Jordan--H\"older factors of $\sigmabar(\tau)$;\ the component corresponding to $\sigma$ has a dense set of $\Fpbar$-points $\rbar$ such that $W(\rbar) = \{\sigma\}$.
\end{itemize}
Here $\sigma(\tau)$ is the  representation of $\GL_2(\cO_K)$ corresponding to $\tau$ under the inertial local Langlands correspondence, and $\sigmabar(\tau)$ is its reduction modulo $p$. Note the similarity between the second of these two properties, and the labeling by Serre weights in Theorem~\ref{thm:intro-thm-EG}. 

These properties are combined in Section~\ref{subsec: generically
  reduced} to prove that the special fibre of $\cZ^{\tau}$ is generically reduced. (Note that in general the special fibre of $\cZ^{\tau}$ need not be the same as $\cZ^{\tau,1}$;\ and similarly for the special fibre of $\cZ^{\dd}$ vis-\`a-vis $\cZ^{\dd,1}$.) From this we deduce the following theorem about the special fibres of potentially Barsotti--Tate deformation rings, which seems hard to prove purely in the setting of
formal deformations. Let $\cO$ be the ring of integers in a finite extension of $\Qp$, with residue field $\F$.

  \begin{thm}\label{thm:intro=gen-red} Let $\rbar : G_K \to \GL_2(\F)$ be a continuous representation, and $\tau$ a tame type. Let $R^{\tau,\BT}_{\rbar}$ be the universal framed deformation $\cO$-algebra parameterising potentially Barsotti--Tate lifts of $\rbar$ of type $\tau$. Then $R^{\tau,\BT}_{\rbar}\otimes_{\cO} \F$ is generically reduced.  
  \end{thm}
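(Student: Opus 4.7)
The plan is to deduce the theorem from the generic reducedness of the special fibre $(\cZ^{\tau})_\F$ (which is the topic of Section~\ref{subsec: generically reduced}), by transferring that property along the versal morphism $\Spf R^{\tau,\BT}_\rbar \to \cZ^\tau$ at $\rbar$.

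First I would identify $R^{\tau,\BT}_\rbar$ with a framed versal ring of $\cZ^\tau$ at $\rbar$. Because $\cZ^\tau$ parameterises (tamely) potentially Barsotti--Tate representations of type $\tau$, the universal framed deformation of $\rbar$ as a potentially Barsotti--Tate lift of type $\tau$ yields a morphism $\Spf R^{\tau,\BT}_\rbar \to \cZ^\tau$ that is formally smooth at $\rbar$, of relative dimension $d^2 = 4$ coming from the $\GL_d$-framing;\ equivalently, there exists a smooth atlas $U \to \cZ^\tau$ through a point $u$ above $\rbar$ for which $\widehat{\cO}_{U,u}$ agrees with $R^{\tau,\BT}_\rbar$ modulo a formally smooth factor.

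Granted generic reducedness of $(\cZ^\tau)_\F$, the special fibre $U_\F = U \otimes_\cO \F$ is generically reduced, since smooth morphisms preserve generic reducedness:\ any minimal prime of $U_\F$ lies over a minimal prime of $(\cZ^\tau)_\F$ by flatness, and the local ring there is a smooth extension of a field, hence itself a field. Since $U_\F$ is of finite type over $\F$ it is excellent, so completing at the closed point $u$ preserves generic reducedness;\ and $R^{\tau,\BT}_\rbar \otimes_\cO \F$ is obtained from $\widehat{\cO}_{U_\F, u}$ by adjoining finitely many formally smooth variables, which again preserves generic reducedness.

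The main obstacle in the overall argument is therefore the generic reducedness of $(\cZ^\tau)_\F$ itself, which will be established in Section~\ref{subsec: generically reduced} using the reducedness of $\cZ^{\tau,1}$ together with the description of its irreducible components in terms of Jordan--H\"older factors of $\sigmabar(\tau)$;\ in particular one must verify that no extra irreducible components are introduced in passing from $\cZ^{\tau,1}$ to $(\cZ^\tau)_\F$, a point that is delicate because $\cZ^\tau$ is defined as a scheme-theoretic image. Granting this, the transfer to framed deformation rings outlined above is essentially formal.
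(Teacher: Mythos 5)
Your transfer step (from generic reducedness of the special fibre of $\cZ^\tau$ to generic reducedness of $R^{\tau,\BT}_{\rbar}/\varpi$) is essentially the paper's: by Proposition~\ref{cor: R tau BT is a versal ring to Z-hat} the ring $R^{\tau,\BT}_{\rbar}$ is \emph{literally} a versal ring to $\cZ^\tau$ at $x$, so the paper can invoke the Stacks Project result on versal rings of algebraic stacks to produce a smooth morphism $V \to \cZ^{\tau}_{/\F}$ with $\widehat{\cO}_{V,v} \cong R^{\tau,\BT}_{\rbar}/\varpi$ on the nose, and then conclude via Lemma~\ref{lem: flat pullbacks are reduced scheme theoretically dense} and the excellence argument of Lemma~\ref{lem: generic reducedness passes to completions}. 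Your variant, in which $\widehat{\cO}_{U,u}$ only agrees with $R^{\tau,\BT}_{\rbar}$ ``modulo a formally smooth factor,'' would need an extra (standard, but not free) argument that generic reducedness ascends and descends along adjoining formal variables, and your relative dimension count of $d^2=4$ ignores the automorphisms of points of $\cZ^\tau$; these are repairable imprecisions.

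The genuine gap is that you take as given the generic reducedness of $\cZ^{\tau}_{/\F}$, deferring it to ``Section~\ref{subsec: generically reduced}'' --- but that statement \emph{is} the substantive content of the theorem you are proving; it is not available from elsewhere and is established in the paper as Proposition~\ref{prop: existence of dense open substack of R such that C is a mono}. Moreover, your description of the delicate point (``no extra irreducible components are introduced in passing from $\cZ^{\tau,1}$ to $(\cZ^\tau)_\F$'') misidentifies the danger: $\cZ^{\tau,1}$ is the underlying reduced substack of $\cZ^\tau$, so the special fibre has the same topological space and the same components; the issue is nilpotents, i.e.\ that components of $\cZ^\tau \times_\cO \F$ could occur non-reducedly because scheme-theoretic images do not commute with the non-flat base change $\cO \to \F$. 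Reducedness of $\cZ^{\tau,1}$ and the component description alone do not rule this out. The paper's mechanism is to use Theorem~\ref{thm:main thm cegsC}(2),(3) to produce a dense open substack $\cU^1$ of $\cZ^{\tau,1}$ over which the proper map $\cC^{\tau,\BT,1}\to\cZ^{\tau,1}$ is a monomorphism (uniqueness of the Breuil--Kisin model), and then the appendix Lemma~\ref{prop:opens} (properness plus scheme-theoretic dominance) to show that over the corresponding open $\cU\subset\cZ^\tau$ the closed immersion $\cU\times_{\cZ^\tau}\cZ^{\tau,a}\hookrightarrow \cU\times_\cO\cO/\varpi^a$ is an isomorphism for all $a$; in particular $\cU_{/\F}\cong\cU^1$ is reduced, because $\cZ^{\tau,1}$ is --- which in turn rests on the reducedness of the special fibre of $\cC^{\tau,\BT}$ proved by local models in \cite{cegsB}. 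Without this step, or a substitute for it, your argument does not prove the theorem.
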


We anticipate that this result will be of independent interest.  For example, one of us (A.C.), in joint work \cite{CaraianiNewton} with James Newton, has used this result in the proof of a modularity lifting theorem in the Barsotti--Tate case for $\GL_2$ over a CM field;\ this modularity lifting theorem is used, in turn, to deduce the modularity of elliptic curves over $\Q(\sqrt{-d})$ for $d \in \{1,2,3,5\}$.

We remark that tr\`es ramifi\'ee
representations do not have tamely potentially Barsotti--Tate lifts,
hence do not correspond to finite type points on
$\cZ^{\dd,1}$. Equivalently (\emph{cf}.~\cegsBtresramlemma), the Jordan--H\"older factors of
$\sigmabar(\tau)$ for tame types $\tau$ are never Steinberg, and
therefore the stacks $\cZ^{\tau,1}$ and $\cZ^{\dd,1}$ do not have
irreducible components corresponding to Steinberg weights. So,
although $\cZ^{\dd,1}$ and $\cX_{2,\red}$ cannot be isomorphic, we
anticipate (but do not prove) that there is an isomorphism between
$\cZ^{\dd,1}$ and the union of the non-Steinberg components of
$\cX_{2,\red}$, along the same lines as \cite[Thm.~4.5]{APAW}.

If $\sigma$ is a non-Steinberg weight, then $\sigma$ can be written as a virtual linear combination of representations $\sigmabar(\tau)$ in the Grothendick group of $\GL_2(k)$. In Section~\ref{sec:appendix on geom BM} this observation is translated into a special case of the classical geometric Breuil--M\'ezard conjecture \cite{emertongeerefinedBM};\ we globalize this in Section~\ref{sec: picture} to prove the following theorem, which is the main result of this paper.

\begin{thm}\label{thm:intro-main-Z} The irreducible components of $\cZ^{\dd,1}$ are in bijection with non-Steinberg Serre weights;\ write  $\ocZ(\sigma)$ for the component corresponding to $\sigma$. Then:
\begin{enumerate}
  \item The finite type points of $\ocZ(\sigma)$ are precisely the representations $\rbar : G_K \to \GL_2(\F')$ having $\sigma$ as a Serre weight.
  \item The stack  $\cZ^{\tau,1}$ is equal to $\cup_{\sigma \in \JH(\sigmabar(\tau))} \ocZ(\sigma)$.
\end{enumerate}
\end{thm}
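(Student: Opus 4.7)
The plan is to combine the component description of the $\cZ^{\tau,1}$ from~\cite{cegsC} with the classical Breuil--M\'ezard conjecture of~\cite{geekisin}, using the generic reducedness of $\cZ^{\tau,1}$ (Theorem~\ref{thm:intro=gen-red}) as the crucial bridge. Since $\cZ^{\dd,1}$ is the union of the $\cZ^{\tau,1}$ over tame types $\tau$, and (by \cegsCcomponentdescription) each $\cZ^{\tau,1}$ has irreducible components $\ocZ_\tau(\sigma)$ for $\sigma \in \JH(\sigmabar(\tau))$, each characterised by containing a dense locus of $\Fpbar$-points $\rbar$ with $W(\rbar)=\{\sigma\}$, the component $\ocZ_\tau(\sigma)$ depends only on $\sigma$ and not on the auxiliary type $\tau$; I may therefore write $\ocZ(\sigma)$ unambiguously. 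Combined with the standard facts that every non-Steinberg Serre weight arises as a Jordan--H\"older factor of $\sigmabar(\tau)$ for some tame type $\tau$, and that all Jordan--H\"older factors of such $\sigmabar(\tau)$ are non-Steinberg, this yields the bijection asserted in the theorem. Part~(2) is then immediate from the identification of components of $\cZ^{\tau,1}$.

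For part~(1), I would invoke the geometric Breuil--M\'ezard formalism. Generic reducedness of $\cZ^{\tau,1}$, together with the multiplicity-freeness of $\sigmabar(\tau)$ for tame $\tau$ when $p>2$, yields the cycle identity
$[\cZ^{\tau,1}] = \sum_{\sigma \in \JH(\sigmabar(\tau))} [\ocZ(\sigma)] = \sum_\sigma n^\tau_\sigma [\ocZ(\sigma)]$,
which is the geometric Breuil--M\'ezard expression (developed in Section~\ref{sec:appendix on geom BM}). Formally completing at a finite type point $\rbar$ and comparing with the classical Breuil--M\'ezard identity $e(R^{\tau,\BT}_\rbar/\varpi) = \sum_\sigma n^\tau_\sigma \mu_\sigma(\rbar)$ from~\cite{geekisin}, then varying $\tau$ across all tame types, I would deduce that the local multiplicity of $\ocZ(\sigma)$ at $\rbar$ equals $\mu_\sigma(\rbar)$ for each $\sigma$. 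Since $\mu_\sigma(\rbar)$ is nonzero exactly when $\sigma \in W(\rbar)$, this shows that $\rbar$ lies on $\ocZ(\sigma)$ precisely when $\sigma \in W(\rbar)$, proving~(1).

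The main obstacle will be setting up the geometric Breuil--M\'ezard framework carefully enough that the cycle decomposition of $[\cZ^{\tau,1}]$ on the stack passes to the cycle on the formal completion at $\rbar$, and that the resulting system of identities (over all relevant tame $\tau$) uniquely pins down the local multiplicities $\mu_\sigma(\rbar)$. This geometric-to-formal translation is the work of Section~\ref{sec:appendix on geom BM}, which is then globalized across the stack in Section~\ref{sec: picture}; once these are in place, the theorem follows from the input of~\cite{geekisin} and the component-level information from~\cite{cegsC}.
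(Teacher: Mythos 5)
Your plan for part (1) — generic reducedness of $R^{\tau,\BT}_{\rbar}/\varpi$ combined with the Breuil--M\'ezard input, transported through versal rings and varied over tame types — is indeed the paper's mechanism. But there is a genuine gap at your very first step, and both parts of the theorem lean on it: you assert that the component of $\cZ^{\tau,1}$ attached to $\sigma\in\JH(\sigmabar(\tau))$ is independent of the auxiliary type $\tau$ because it is ``characterised by containing a dense locus of points with $W(\rbar)=\{\sigma\}$''. Two problems. First, what is actually available from \cite{cegsC}, as recalled in Theorem~\ref{thm:main thm cegsC}(4) and Remark~\ref{rem:maximalfamilies}, is only a dense set of points of $\cZbar(J)$ having $\sigmabar(\tau)_J$ as \emph{a} Serre weight (with a unique Breuil--Kisin model), not as the unique weight. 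Second, and more seriously, even the stronger statement would not characterise the component: knowing that each of two irreducible closed substacks contains a dense set of points drawn from the locus $\{\rbar : W(\rbar)=\{\sigma\}\}$ does not force them to coincide, unless one already knows that the closure of that locus (or of the explicit family of extensions) is irreducible of dimension $[K:\Qp]$ — which is essentially the content of what is being proven. This is the same faulty deduction that the paper explicitly flags and repairs in connection with \cite[Thm.~8.6.2]{EGmoduli}. Since your part (2) is ``immediate'' only from this unproven identification, and the cycle identity $[\cZ^{\tau,1}]=\sum_{\sigma\in\JH(\sigmabar(\tau))}[\ocZ(\sigma)]$ underlying your part (1) presupposes it as well, the gap propagates through the whole argument.

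The paper avoids assuming any cross-type identification. It defines $\cZ(\sigma):=\sum_\tau n_\tau(\sigma)\cZ(\tau)$ as a \emph{virtual} cycle in the free group on the irreducible components of $\cZ^{\dd,1}$, shows via the patched geometric Breuil--M\'ezard cycles of Section~\ref{sec:appendix on geom BM} together with Theorem~\ref{prop: generically reduced special fibre deformation ring} and Lemma~\ref{lem: can read off reduced and effective from local rings} that each $\cZ(\sigma)$ is effective and reduced, is supported exactly on the $\rbar$ with $\sigma\in W(\rbar)$, and satisfies $\cZ(\tau)=\sum_{\sigma\in\JH(\sigmabar(\tau))}\cZ(\sigma)$; a counting argument — $\cZ^{\tau,1}$ has exactly $\#\cP_\tau$ components, there exist representations with a given weight as their unique non-Steinberg Serre weight, and $\cZ(\tau)$ is reduced — then forces each $\cZ(\sigma)$ to be a single irreducible component and identifies it with the appropriate $\cZbar(J)$. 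In other words, the independence of $\tau$ is an \emph{output} of the Breuil--M\'ezard bookkeeping, not an input. (Note also that the purely numerical statement of \cite{geekisin} pins down Hilbert--Samuel multiplicities, not cycles; the cycle-level identity, including the effectivity argument that removes a global sign ambiguity, requires the patched support cycles of \cite{emertongeerefinedBM}, which is precisely what Section~\ref{sec:appendix on geom BM} supplies.) To salvage your structure you would need either to run this effectivity-plus-counting argument, or to give an actual proof — say via the irreducibility and dimension of the family of extensions in Remark~\ref{rem:maximalfamilies} — that the closure of your dense family is a single component independent of $\tau$.
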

Part (1) of the theorem is the analogue of Theorem~\ref{thm:component-thm} for the stacks $\cZ^{\dd,1}$, while part (2) is a geometrisation of the Breuil--M\'ezard conjecture for our tamely potentially {}Barsotti--Tate stacks. Theorem~\ref{thm:intro=gen-red} is used crucially in the proof of Theorem~\ref{thm:intro-main-Z}, to confirm that each component $\ocZ(\sigma)$ contributes with multiplicity at most one to the cycle of $R^{\tau,\BT}\otimes_{\cO} \F$.  We emphasize that, to this point, our results are independent from those of \cite{EGmoduli}.

As explained above, our construction excludes the tr\`es ramifi\'ee
representations, which are twists of certain extensions of the trivial
character by the mod~$p$ cyclotomic character. From the point of view
of the weight part of Serre's conjecture, they are precisely the
representations which admit a twist of the Steinberg representation as
their only Serre weight. In accordance with the picture described
above, this means that the full moduli stack of 2-dimensional
representations of~$G_K$ can be obtained from our
stack by adding in the irreducible components consisting of the tr\`es
ramifi\'ee representations. This is carried out by extending our results to the stacks of~\cite{EGmoduli}, using the full strength of \emph{loc.\ cit.}

In particular, it is proved in~\cite{EGmoduli} that the classical (numerical) Breuil--M\'ezard conjecture is equivalent to a geometrised Breuil--M\'ezard conjecture for the stacks $\cX_d^{\lambda,\tau}$ of~\cite{EGmoduli}. Taking the Breuil--M\'ezard conjecture for potentially Barsotti--Tate representations \cite{geekisin} as input, they obtain the following theorem.

\begin{thm}[\cite{EGmoduli}]\label{thm:EG-BM} There exist effective cycles $Z^{\sigma}$ {\upshape{(}}elements of the free group on the irreducible components of $\cX_{2,\red}$, with nonnegative coefficients{\upshape{)}} such that for all inertial types $\tau$, the cycle of the special fibre of $\cX_{2}^{\tau,\BT}$ is equal to $\sum_{\sigma} m_{\sigma}(\tau) \cdot Z^{\sigma}$, where $\sigmabar(\tau) = \sum_{\sigma} m_{\sigma}(\tau) \cdot \sigma$ in the Grothendieck group of $\GL_2(k)$.
\end{thm}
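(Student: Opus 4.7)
The plan is to reduce this geometric statement to the classical (numerical) Breuil--M\'ezard conjecture for potentially Barsotti--Tate representations, which is already established in \cite{geekisin}. The key structural input is that the book \cite{EGmoduli} develops a formalism in which the classical numerical Breuil--M\'ezard conjecture for~$\GL_2$ is \emph{equivalent} to a geometrisation phrased in terms of the top-dimensional cycle structure of the special fibres of the substacks $\cX_2^{\lambda,\tau}\subset \cX_2$. So the task reduces to verifying this equivalence, and then feeding the classical theorem into the equivalence.

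First, one recalls that each $\cX_d^{\lambda,\tau}$ is a $p$-adic formal algebraic stack, $p$-adically formally of finite presentation over~$\Spf \cO$, whose special fibre $\cX_d^{\lambda,\tau,1}$ is an algebraic stack over $\F$ of dimension at most $[K:\Qp]\binom{d}{2}$ (by the dimension formula of Theorem~\ref{thm:intro-thm-EG}). This bound, combined with equidimensionality of $\cX_{2,\red}$, forces the special fibre $\cX_2^{\tau,\BT,1}$ to be supported on a union of irreducible components of~$\cX_{2,\red}$, so it has a well-defined cycle $Z(\cX_2^{\tau,\BT,1})$ in the free abelian group on the irreducible components of $\cX_{2,\red}$.

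Second, one uses the computation in \cite{EGmoduli} of the versal rings of $\cX_2^{\tau,\BT}$ at a finite type point~$\rbar$: they coincide with the classical framed potentially Barsotti--Tate deformation rings $R^{\tau,\BT}_{\rbar}$. Consequently, at each~$\rbar$, the Hilbert--Samuel multiplicity of $\Spec(R^{\tau,\BT}_{\rbar}\otimes_{\cO}\F)$ decomposes as a sum $\sum_\sigma \mu_\sigma(\rbar)\,e_{\sigma,\rbar}$, where $e_{\sigma,\rbar}$ records the contribution of the component labelled by $\sigma$ at~$\rbar$, and the $\mu_\sigma(\rbar)$ are (by the classical Breuil--M\'ezard conjecture proven in \cite{geekisin}) precisely the automorphic multiplicities given by $\sigmabar(\tau) = \sum_\sigma m_\sigma(\tau)\cdot\sigma$, evaluated so that $\mu_\sigma(\rbar)$ depends only on~$\sigma$ and the components through~$\rbar$. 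The numerical identity $\mu_\sigma(\rbar) = \sum_\tau m_\sigma(\tau)^{-1}\cdot\text{(multiplicities)}$ of the Breuil--M\'ezard conjecture is then precisely the assertion, at each~$\rbar$, of the cycle identity $Z(\cX_2^{\tau,\BT,1}) = \sum_\sigma m_\sigma(\tau)\cdot Z^\sigma$ for some effective cycles~$Z^\sigma$ supported on $\cX_{2,\red}$.

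The main obstacle is the equivalence between the numerical and geometric statements: one has to show that from the collection of multiplicities at all $\rbar$ (for all tame~$\tau$) one can actually extract a \emph{global} effective cycle $Z^\sigma$ on $\cX_{2,\red}$ whose multiplicities at every~$\rbar$ match. This is handled in \cite{EGmoduli} by invertibility properties of the matrix $(m_\sigma(\tau))$ as $\tau$ and $\sigma$ vary (essentially, that the representations $\sigmabar(\tau)$ span the Grothendieck group of mod~$p$ representations of $\GL_2(k)$ modulo the twists of~$\St$, together with ancillary input from extended types to handle the Steinberg contribution), combined with an \'etale-local patching of the cycle data across the stack. Effectivity is automatic, since the local multiplicities coming from $R^{\tau,\BT}_{\rbar}\otimes_{\cO}\F$ are nonnegative. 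Once these pieces are assembled, the theorem is the conjunction of the classical result \cite{geekisin} with the equivalence proved in \cite{EGmoduli}.
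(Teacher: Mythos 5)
Your route is the same as the paper's: Theorem~\ref{thm:EG-BM} is not proved in this paper at all, but is imported from \cite{EGmoduli}, where it is obtained exactly as you outline, by feeding the numerical Breuil--M\'ezard conjecture for two-dimensional potentially Barsotti--Tate representations \cite{geekisin} into the equivalence between the numerical and geometric forms of the conjecture for the stacks of \cite{EGmoduli}, the bridge being that $R^{\tau,\BT}_{\rbar}$ is a versal ring to $\cX_2^{\tau,\BT}$ at the point $\rbar$. So at the level of strategy there is nothing to object to.

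Several details of your sketch of the mechanism are off, however. The cycles are not extracted by ``inverting the matrix $(m_\sigma(\tau))$'' and then ``\'etale-locally patching cycle data across the stack'' (and the identity you write, $\mu_\sigma(\rbar)=\sum_\tau m_\sigma(\tau)^{-1}\cdot(\text{multiplicities})$, is not meaningful as stated). As recalled in Remark~\ref{rem:how-to-get-Z}, one instead chooses on each irreducible component $\cX^{\sigma'}_{2,\red}$ a single smooth finite type point $\rbar_{\sigma'}$ lying on no other component, and simply defines $Z^{\sigma}:=\sum_{\sigma'}\mu_{\sigma}(\rbar_{\sigma'})\cdot\cX^{\sigma'}_{2,\red}$; the cycle identity for the special fibre of $\cX_2^{\tau,\BT}$ is then verified component by component at these points, since the multiplicity of the special fibre along a component is computed from the versal ring at such a sufficiently generic point --- no global patching step is needed, because the cycle of the special fibre is already a global object. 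Relatedly, your claim that effectivity is ``automatic'' is correct only for this construction, where it follows from nonnegativity of the $\mu_{\sigma}(\rbar_{\sigma'})$; if one defines the $Z^{\sigma}$ by inverting the relation $\sigmabar(\tau)=\sum_\sigma m_\sigma(\tau)\sigma$, as your ``invertibility'' framing suggests, the coefficients $n_\tau(\sigma)$ can be negative and effectivity becomes a genuine issue --- compare the patching argument needed to prove effectivity in Theorem~\ref{thm: geometric BM} for the analogous statement about the $\cZ$-stacks. Finally, your dimension argument in the first step is incomplete: an upper bound on $\dim$ of the special fibre does not force its support to be a union of irreducible components of $\cX_{2,\red}$; one needs that this special fibre is equidimensional of dimension exactly $[K:\Qp]$, which comes from $\cO$-flatness and the known equidimensionality of the rings $R^{\tau,\BT}_{\rbar}$ (as in \cite[Prop.\ 4.1.2]{emertongeerefinedBM}), not merely from the dimension formula for $\cX_{2,\red}$.
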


We stress that this theorem of \cite{EGmoduli} is for all inertial types, in contrast to the Breuil--M\'ezard result of Theorem~\ref{thm:intro-main-Z}(2) which is only for tame types;\ in particular the cycles for Steinberg weights $\sigma$ do occur. In fact the theorem can be (and is) extended to cover  potentially semistable representations of Hodge type $0$ as well.

It remains to prove that the cycles $Z^{\sigma}$ are as in Theorem~\ref{thm:component-thm}, i.e., to check that $Z^{\sigma} = \cX^{\sigma}_{2,\red}$ when $\sigma$ is non-Steinberg, whereas $Z^{\chi \otimes \St} = \cX^{\chi}_{2,\red} + \cX^{\chi \otimes \St}_{2,\red}$. This is where the the results of the present paper enter.  We argue by transferring results from $\cZ^{\tau}$ to $\cX_2^{\tau,\BT}$ via a consideration of versal rings, without comparing the two stacks directly.\footnote{\emph{Added in revision}: it is now also possible to transfer these results using \cite[Thm~4.5]{APAW}.} In particular the ring $R^{\tau,\BT}_{\rbar}$ is a versal ring to $\cX^{\tau,\BT}$ at the point corresponding to $\rbar$;\ and so the formula $Z^{\sigma} = \cX^{\sigma}_{2,\red}$ in the non-Steinberg case will follow by an application of Theorem~\ref{thm:intro=gen-red} for a suitably chosen $\rbar$. The Steinberg case is handled directly using a semistable deformation ring. This completes the proof.

\section{Notation and conventions}\label{subsec: notation}

\subsubsection*{Galois theory}

 Let $p > 2$ be a prime number, 
 and fix  a finite extension $K/\Qp$, with
  residue field $k$ of cardinality $p^f$. In this paper we will study various stacks that are closely related to
the representation theory of $G_K$, the absolute Galois group of $K$.

Our representations of $G_K$ will have coefficients in $\Qpbar$,
a fixed algebraic closure of~$\Qp$ whose residue field we denote by~$\Fpbar$. Let $E$ be a finite
extension of $\Qp$ contained in $\Qpbar$. Write $\cO$ for the ring of integers in
$E$, with uniformiser $\varpi$ and residue field $\F \subset
\Fpbar$.  

As is often the case, we assume that our coefficients are ``sufficiently large''. Specifically, if $L$ is the quadratic unramified extension of $K$, we assume that $E$ admits an embedding of $K' = L(\pi^{1/(p^{2f}-1)})$ for some uniformizer $\pi$ of $K$. Write $l$ for the residue field of $L$.

Fix an embedding $\sigma_0:k \into\F$, and recursively define
$\sigma_i:k \into\F$ for all $i\in\Z$ so that
$\sigma_{i+1}^p=\sigma_i$. 
For each $i$ we
define the fundamental character $\omega_{\sigma_i}$ to be the composite \[\xymatrix{I_K \ar[r] & \cO_{K}^{\times}\ar[r] & k^{\times}\ar[r]^{\sigma_i} & \Fpbarx,}\]
where the map $I_K \to \cO_K^\times$ is induced by the restriction of the inverse of the Artin map, which we normalise so that
uniformisers correspond to geometric Frobenius elements.

\subsubsection*{Inertial local Langlands} A two-dimensional \emph{tame inertial type} is (the isomorphism
class of) a tamely ramified representation
$\tau : I_K \to \GL_2(\Zpbar)$ that extends to a representation of $G_K$ and
whose kernel is open.  
Such a representation is of the form $\tau
\simeq \eta \oplus \eta'$, and we say that $\tau$ is a \emph{tame principal series
  type} if 
$\eta,\eta'$ both extend to characters of $G_K$. Otherwise,
$\eta'=\eta^q$, and $\eta$ extends to a character of~$G_L$.
In this case we say
that~$\tau$ is a \emph{tame cuspidal type}. In either case $\tau|_{I_{K'}}$ is trivial, since $\tau$ is tame, and therefore a potentially crystalline representation of $G_K$ with inertial type~$\tau$ will become crystalline over $K'$.

Henniart's appendix to \cite{breuil-mezard}
associates a finite dimensional irreducible $E$-representation $\sigma(\tau)$ of
$\GL_2(\cO_K)$ to each inertial type $\tau$; we refer to this association as the {\em
  inertial local Langlands correspondence}. 
Since we are only working
with tame inertial types, this correspondence can be made very
explicit, as in \cegsCnotation. (Since we will not directly use the explicit description in this paper, we will not repeat it here.)

\subsubsection*{Serre weights and tame types}
By definition, a \emph{Serre weight} is an irreducible
$\F$-representation of $\GL_2(k)$. Then, concretely, a Serre weight
 is of the form
\[\sigmabar_{\vec{t},\vec{s}}:=\otimes^{f-1}_{j=0}
(\det{\!}^{t_j}\Sym^{s_j}k^2) \otimes_{k,\sigma_{j}} \F,\]
where $0\le s_j,t_j\le p-1$ and not all $t_j$ are equal to
$p-1$. We say that a Serre weight is \emph{Steinberg} if $s_j=p-1$ for all $j$,
and \emph{non-Steinberg} otherwise.

Let $\tau$ be a tame inertial type. 
 Write $\sigmabar(\tau)$ for the
semisimplification of the reduction modulo~$p$ of a
$\GL_2(\cO_K)$-stable $\cO$-lattice in $\sigma(\tau)$. The action
of~$\GL_2(\cO_K)$ on~$\sigmabar(\tau)$ factors through~$\GL_2(k)$, so
the Jordan--H\"older factors~$\JH(\sigmabar(\tau))$ of~$\sigmabar(\tau)$ are Serre weights.
By the results of~\cite{MR2392355}, these Jordan--H\"older factors of
$\sigmabar(\tau)$ are pairwise non-isomorphic, and are parametrised
by a certain  set $\cP_\tau$ 
that we now recall.

Suppose first that $\tau=\eta\oplus\eta'$ is a tame principal series 
type. Set $f'=f$ in this case.
We define $0 \le \gamma_i \le p-1$ (for $i \in \Z/f\Z$) 
to be the unique integers not all equal to $p-1$ such that 
$\eta (\eta')^{-1} = \prod_{i=0}^{f-1}
\omega_{\sigma_i}^{\gamma_i}$.

If instead $\tau = \eta \oplus \eta'$
is a cuspidal type, set $f'=2f$. We define $0 \le \gamma_i \le p-1$ (for $i \in \Z/f'\Z$) 
to be the unique integers such that 
$\eta (\eta')^{-1} = \prod_{i=0}^{f'-1}
\omega_{\sigma'_i}^{\gamma_i}$. Here 
$\sigma'_0 : l \to \Fpbar^{\times}$ is a fixed choice of embedding extending $\sigma_0$, 
$(\sigma'_{i+1})^p = \sigma'_i$ for all $i$, and the fundamental characters $\omega_{\sigma'_i} : I_L \to \Fpbar^{\times}$ for each $\sigma'_i : l \to \Fpbar^{\times}$ are defined in the same way as the $\omega_{\sigma_i}$.

If~$\tau$ is scalar then
we set $\cP_\tau=\{\varnothing\}$. 
Otherwise we have $\eta\ne\eta'$, and 
we let
$\cP_{\tau}$ be the collection of subsets $J \subset \Z/f'\Z$ 
satisfying the conditions:
\begin{itemize}
\item if $i-1\in J$ and $i\notin J$ then $\gamma_{i}\ne p-1$, and
\item if $i-1\notin J$ and $i\in J$ then $\gamma_{i}\ne 0$
\end{itemize}
and, in the cuspidal case, satisfying the further condition that $i
\in J$ if and only if $i+f \not\in J$.

The Jordan--H\"older factors of $\sigmabar(\tau)$ are by definition
Serre weights, and are
parametrised by $\cP_{\tau}$ as follows (see~\cite[\S3.2, 3.3]{emertongeesavitt}). For any $J\subseteq \Z/f'\Z$, we let $\delta_J$ denote
the characteristic function of $J$, and if $J \in \cP_{\tau}$
we define $s_{J,i}$ by
\[s_{J,i}=\begin{cases} p-1-\gamma_{i}-\delta_{J^c}(i)&\text{if }i-1 \in J \\
  \gamma_{i}-\delta_J(i)&\text{if }i-1\notin J, \end{cases}\]
and we set $t_{J,i}=\gamma_{i}+\delta_{J^c}(i)$ if $i-1\in J$ and $0$
otherwise. Write $\vec{s}$ for the tuple $(s_{J,i})_{0 \le i < f}$, suppressing the $J$ from the notation for readability, and similarly for $\vec{t}$.

In the principal series case we let
$\sigmabar(\tau)_J
:=\sigmabar_{\vec{t},\vec{s}}\otimes\eta'\circ\det$ for each $J \in \cP_{\tau}$;
the $\sigmabar(\tau)_J$ are precisely the Jordan--H\"older factors of
$\sigmabar(\tau)$. 

In the cuspidal case, one checks that $s_{J,i} = s_{J,i+f}$ for all
$i$, and also that  the character $\eta' \cdot
\prod_{i=0}^{f'-1} (\sigma'_i)^{t_{J,i}} : l^{\times} \to
\F^{\times}$ factors as $\theta \circ N_{l/k}$ where $N_{l/k}$
is the norm map. We let $\sigmabar(\tau)_J
:=\sigmabar_{0,\vec{s}}\otimes\theta \circ\det$, again for $J \in \cP_\tau$;
the $\sigmabar(\tau)_J$ are precisely the Jordan--H\"older factors of
$\sigmabar(\tau)$.

\subsubsection*{$p$-adic Hodge theory} We normalise Hodge--Tate weights so that all Hodge--Tate weights of
the cyclotomic character are equal to $-1$. We say that a potentially
crystalline representation $r :G_K\to\GL_2(\Qpbar)$ has \emph{Hodge
  type} $0$, or is \emph{potentially Barsotti--Tate}, if for each
$\varsigma :K\into \Qpbar$, the Hodge--Tate weights of $r$ with
respect to $\varsigma$ are $0$ and $1$.  (Note that this is a more
restrictive definition of potentially Barsotti--Tate than is sometimes
used; however, we will have no reason to deal with representations
with non-regular Hodge-Tate weights, and so we exclude them from
consideration. Note also that it is more usual in the literature to
say that $r$ is potentially Barsotti--Tate if it is potentially
crystalline, and $r^\vee$ has Hodge type $0$.) 

We say
that a potentially crystalline representation
$r:G_K\to\GL_2(\Qpbar)$ has  \emph{inertial type} $\tau$ if the traces of
elements of $I_K$ acting on~$\tau$ and on
\[\Dpcris(r)=\varinjlim_{K'/K}(\Bcris\otimes_{\Qp}V_r)^{G_{K'}}\] are
equal (here~$V_r$ is the underlying vector space
of~$V_r$). 
A representation $\rbar:G_K\to\GL_2(\Fpbar)$ \emph{has a potentially
    Barsotti--Tate lift of
    type~$\tau$} if and
  only if $\rbar$ admits a lift to a representation
  $r:G_K\to\GL_2(\Zpbar)$ of Hodge type~$0$ and inertial type~$\tau$.

\subsubsection*{Serre weights of mod~$p$ Galois representations}  Given a continuous representation $\rbar:G_K\to\GL_2(\Fpbar)$, there
is an associated (nonempty) set of Serre weights~$W(\rbar)$, defined 
to be the set of Serre weights $\sigmabar_{\vec{t},\vec{s}}$ such that $\rbar$ has a
crystalline lift whose Hodge--Tate weights are as follows:\ for each embedding $\sigma_j : k \into \F$ there is an
  embedding $\widetilde{\sigma}_j : K \into \Qpbar$ lifting $\sigma_j$
  such that the $\widetilde{\sigma}_j$-labelled Hodge--Tate weights
  of~$r$ are $\{-s_j-t_j,1-t_j\}$, and the remaining $(e-1)f$ pairs of  Hodge--Tate weights
  of~$r$ are all $\{0,1\}$.

There are in fact
several different definitions of~$W(\rbar)$ in the literature; as a
result of the papers~\cite{blggu2,geekisin,gls13}, these definitions
are known to be equivalent up to normalisation.  The normalisations in this paper are the same as those of \cite{cegsB,cegsC};\ see \cegsCnotation\ for a detailed discussion of these normalisations. In particular we have normalised the set of Serre weights so that $\rbar$ has a potentially Barsotti--Tate lift of type $\tau$ if and only if $W(\rbar) \cap \JH(\sigmabar(\tau)) \neq \varnothing$ (\cegsBtresramlemma).

\subsubsection*{Stacks}We follow the terminology of~\cite{stacks-project}; in
particular, we write ``algebraic stack'' rather than ``Artin stack''. More
precisely, an algebraic stack is a stack in groupoids in the \emph{fppf} topology,
whose diagonal is representable by algebraic spaces, which admits a smooth
surjection from a
scheme. See~\cite[\href{http://stacks.math.columbia.edu/tag/026N}{Tag
  026N}]{stacks-project} for a discussion of how this definition relates to
others in the literature, and~\cite[\href{http://stacks.math.columbia.edu/tag/04XB}{Tag
  04XB}]{stacks-project} for key properties of morphisms
representable by algebraic spaces.

For a commutative ring $A$, an \emph{fppf stack over $A$} (or
\emph{fppf} $A$-stack) is a stack fibred in groupoids over the big \emph{fppf}
site of $\Spec A$. Following \cite[Defs.~5.3, 7.6]{Emertonformalstacks}, an {\em fppf} stack in groupoids $\cX$ over a scheme $S$ is called
a {\em formal algebraic stack} if
there is a morphism $U \to \cX$,
  whose domain $U$ is a formal algebraic space over $S$
  (in the sense 
of~\cite[\href{http://stacks.math.columbia.edu/tag/0AIL}{Tag 0AIL}]{stacks-project}),
and which is representable by algebraic spaces,
smooth, and surjective.

Let $\Spf \cO$ denote the affine formal
scheme (or affine formal algebraic space, in the terminology
of~\cite{stacks-project}) obtained by $\varpi$-adically completing
$\Spec \cO$.    A formal algebraic stack $\cX$ over $\Spec \cO$
  is called $\varpi$-adic if the canonical map
  $\cX \to \Spec \cO$ factors 
  through $\Spf \cO$,
  and if the induced map $\cX \to \Spf \cO$
  is algebraic, i.e.\ representable by algebraic
  stacks (in the sense 
of~\cite[\href{http://stacks.math.columbia.edu/tag/06CF}{Tag 06CF}]{stacks-project} and \cite[Def.~3.1]{Emertonformalstacks}).

\section{Moduli stacks of Breuil--Kisin modules and \'etale \texorpdfstring{$\varphi$}{phi}-modules}
\label{sec:recollections}

The main object of study in this paper is the $\varpi$-adic formal algebraic stack $\cZ^{\dd}$ that
was introduced and studied in
\cite{cegsB,cegsC}, and whose $\Fpbar$-points are naturally in bijection with the continuous representations $\rbar : G_K \to \GL_2(\Fpbar)$ admitting a potentially Barsotti--Tate lift of some tame type. In this section we review the construction and known properties of $\cZ^{\dd}$, as well as those of several other closely related stacks.

\subsubsection*{Stacks of Breuil--Kisin modules} For each tame type $\tau$, there is a $\varpi$-adic formal algebraic stack $\cC^{\tau,\BT}$ whose $\Spf(\cO_{E'})$-points, for any finite extension $E'/E$, are the Breuil--Kisin modules correpsonding to $2$-dimensional potentially Barsotti--Tate representations of type $\tau$.  This stack is constructed in several steps, which we review in brief. (We refer the reader to \cite{cegsB} as well as to the summary in \cegsCrecollections\ for complete definitions, recalling here only what will be used in this paper.)

For each integer $a \ge 1$, we write $\cC^{\dd,a}$ for  the {\em fppf} stack over~$\cO/\varpi^a$ which associates to any $\cO/\varpi^a$-algebra $A$
the groupoid $\cC^{\dd,a}(A)$ 
of rank~$2$ Breuil--Kisin modules of height at most~$1$ with $A$-coefficients and descent data from
$K'$ to~$K$. Set $\cC^{\dd} = \dirlim_a \cC^{\dd,a}$. The closed substack $\cC^{\dd,\BT}$ of $\cC^{\dd}$ is cut out by a Kottwitz-type determinant condition, which can be thought of (on the Galois side) as cutting out the tamely potentially Barsotti--Tate representations from among all tamely potentially crystalline representations with Hodge--Tate weights in $\{0,1\}$. By \cegsBdisjointunion\ the stack $\cC^{\dd,\BT}$ then decomposes as a disjoint union of closed substacks $\cC^{\tau,\BT}$, one for each tame type $\tau$, consisting of Breuil--Kisin modules with descent data of type~$\tau$. Finally, for each $a\ge 1$ we write $\cC^{\tau,\BT,a} = \cC^{\tau,\BT} \times_{\cO} \cO/\varpi^a$, and similarly for $\cC^{\dd,\BT,a}$. The following properties are established in \cegsBCmainresultsplus.

\begin{thm} The stacks $\cC^{\dd,a}$, $\cC^{\dd,\BT,a}$, and $\cC^{\tau,\BT,a}$ are algebraic stacks of finite type over $\cO$, while the stacks $\cC^{\dd}$, $\cC^{\dd,\BT}$, and $\cC^{\tau,\BT}$ are $\varpi$-adic formal algebraic stacks. Moreover:
\begin{enumerate}
  \item $\cC^{\tau,\BT}$ is analytically normal, Cohen--Macaulay, and flat over $\cO$.
  \item The stacks $\cC^{\dd,\BT,a}$ and $\cC^{\tau,\BT,a}$ are equidimensional of dimension $[K:\Qp]$.
  \item The special fibres $\cC^{\dd,\BT,1}$ and $\cC^{\tau,\BT,1}$ are reduced.

\end{enumerate}
\end{thm}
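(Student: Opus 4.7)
The plan is to prove the theorem in three stages, building the stacks concretely and then using a local model diagram to extract geometric information. First, for algebraicity and finite type, I would present $\cC^{\dd,a}$ by an atlas obtained by rigidifying the underlying module of a Breuil--Kisin module. Once the underlying module is trivialized (compatibly with the tame descent data, which is specified by the splitting of $K'/K$), the remaining data of a Frobenius-semilinear endomorphism of height $\le 1$ with $A$-coefficients is parametrized by an affine $\cO/\varpi^a$-scheme of finite type cut out by the height condition on the cokernel, and this scheme carries a natural action of a linear algebraic group arising from change of basis; the resulting quotient is $\cC^{\dd,a}$. The Kottwitz/Pappas--Rapoport determinant condition carves out $\cC^{\dd,\BT,a} \subset \cC^{\dd,a}$ as a closed substack, which further decomposes as the disjoint union $\bigsqcup_\tau \cC^{\tau,\BT,a}$ by \cegsBdisjointunion. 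Passing to the colimit over $a$ yields the $\varpi$-adic formal algebraic structure on $\cC^{\dd}$, $\cC^{\dd,\BT}$, and each $\cC^{\tau,\BT}$.

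The second and crucial stage is the local model analysis. I would identify the atlas constructed above, smooth-locally, with an open subscheme of a product (indexed by embeddings $k \hookrightarrow \F$, and adapted to the tame descent decomposition) of affine charts on a local model of Pappas--Rapoport type for $\GL_2$ with Iwahori-level structure. Because both the rank and the height are equal to $2$ and $1$ respectively, these charts admit completely explicit equations, and elementary matrix calculations (essentially writing the Frobenius in a well-chosen basis adapted to the descent data eigenspaces) display each chart as flat over $\cO$ of relative dimension $[K:\Qp]$, Cohen--Macaulay, with reduced special fibre whose irreducible components match those expected from the combinatorics of the tame type. Analytic normality follows from Cohen--Macaulayness together with regularity in codimension one, which is visible directly from the explicit chart equations. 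This step---especially the verification that the special fibre is generically reduced on each chart---is the main obstacle, because it must be checked chart-by-chart and organised globally, and it is what ultimately underlies the Breuil--M\'ezard multiplicity-one statements later in the paper.

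Finally, the properties of $\cC^{\dd,\BT,a}$ and $\cC^{\tau,\BT,a}$ for $a \ge 1$ follow by passage to the special fibre. Flatness of $\cC^{\tau,\BT}$ over $\cO$ together with the computation of its relative dimension immediately gives equidimensionality of dimension $[K:\Qp]$ for each $\cC^{\tau,\BT,a}$, and hence for the disjoint union $\cC^{\dd,\BT,a}$. Reducedness of $\cC^{\tau,\BT,1}$ is inherited directly from the reduced local model fibre via the smooth atlas, and once again passes to the disjoint union $\cC^{\dd,\BT,1}$. As the theorem is a recollection from \cegsBCmainresultsplus, the arguments in that reference proceed along essentially these lines, with the bulk of the work concentrated in the explicit local model computation sketched in stage two.
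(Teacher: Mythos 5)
This theorem is not proved in the present paper at all: it is recalled from \cegsBCmainresultsplus, and your outline follows essentially the same route as that reference --- a quotient presentation giving algebraicity, the Kottwitz determinant condition and decomposition into the closed substacks $\cC^{\tau,\BT}$, and a local model diagram relating $\cC^{\tau,\BT}$ to Iwahori-level $\GL_2$ local models, whose standard good properties yield flatness, Cohen--Macaulayness, analytic normality, equidimensionality in dimension $[K:\Qp]$, and reducedness of the special fibre. The only place your sketch is thinner than the actual argument is stage one: the space of Frobenius matrices over $\fS_A$ and the change-of-basis group $\GL_2(\fS_A)$ are not of finite type on the nose, so finite-type algebraicity of $\cC^{\dd,a}$ is not a naive quotient by a linear algebraic group but rests on the truncation/congruence-subgroup arguments of \cite{EGstacktheoreticimages}, as quoted in \cite{cegsB}.
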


\subsubsection*{Galois moduli stacks} Let 
 $\cR^{\dd,a}_{K'}$ be the \emph{fppf} $\F$-stack which
  associates
  to any $\cO/\varpi^a$-algebra $A$ the groupoid $\cR^{\dd,a}_{K'}(A)$ of rank $2$ \'etale
  $\varphi$-modules  with $A$-coefficients and  descent data from $K'$ to
  $K$.  We will usually suppress $K'$ from the notation. Inverting $u$ on Breuil--Kisin modules gives a proper morphism $\cC^{\dd,a} \to \cR^{\dd,a}$, which
then restricts to proper morphisms $\cC^{\dd,\BT,a} \to \cR^{\dd,a}$ as well as 
$\cC^{\tau,\BT,a} \to \cR^{\dd,a}$ for each
$\tau$.

The paper \cite{EGstacktheoreticimages} develops a theory of scheme-theoretic images of proper morphisms $\cX \to \cF$  of stacks over a locally
Noetherian base-scheme~$S$, where $\cX$ is an algebraic stack which is locally of finite presentation over~$S$,
and the diagonal of $\cF$ is representable by algebraic spaces and locally of
finite presentation. This theory applies in particular to each of the morphisms of the previous paragraph (even though $\cR^{\dd,a}$ is \emph{not} algebraic).
We define $\cZ^{\dd,a}$ and $\cZ^{\tau,a}$ to be the scheme-theoretic images of the morphisms  $\cC^{\dd,\BT,a} \to \cR^{\dd,a}$ and $\cC^{\tau,\BT,a} \to \cR^{\dd,a}$, respectively.
Set $\cZ^{\dd} = \dirlim_{a} \cZ^{\dd,a}$ and $\cZ^{\tau} = \dirlim_{a} \cZ^{\tau,a}$. The following theorem combines \cegsBZmainresultsplus.

\begin{thm}\label{lem: C 1 and Z 1 are the underlying reduced substacks}
The stacks $\cZ^{\dd,a}$ and $\cZ^{\tau,a}$ are algebraic stacks of finite type over~$\cO$, while the stacks $\cZ^{\dd}$ and $\cZ^{\tau}$ are $\varpi$-adic formal algebraic stacks. Moreover:
\begin{enumerate}

  \item The  stacks $\cZ^{\dd,a}$ and  $\cZ^{\tau,a}$ are equidimensional of
dimension~$[K:\Qp]$. 
  \item The stacks $\cZ^{\dd,1}$ and $\cZ^{\tau,1}$ are reduced.

    \item The $\Fpbar$-points of $\cZ^{\dd,1}$ are naturally in bijection with the
    continuous representations $\rbar:G_K\to\GL_2(\Fpbar)$ which are not a
    twist of a tr\`es ramifi\'ee extension of the trivial character by the
    mod~$p$ cyclotomic character. Similarly, the $\Fpbar$-points of $\cZ^{\tau,1}$ are naturally in bijection with the
    continuous representations $\rbar:G_K\to\GL_2(\Fpbar)$ which have
    a potentially Barsotti--Tate lift of type~$\tau$.

\end{enumerate}
In particular the stack $\cZ^{\dd,1}$ is the underlying reduced substack of $\cZ^{\dd,a}$ for each $a \ge 1$, as well as of $\cZ^{\dd}$; and similarly for the stacks $\cZ^{\tau,1}$.
\end{thm}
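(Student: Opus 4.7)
The proof is a recapitulation of results from \cegsBZmainresultsplus, so my plan is essentially to assemble the relevant statements and explain how each piece of the combined theorem follows.

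First, I would establish the algebraicity assertions. Each morphism $\cC^{\dd,\BT,a} \to \cR^{\dd,a}$ (resp.\ $\cC^{\tau,\BT,a} \to \cR^{\dd,a}$) is proper, with source algebraic and of finite type over $\cO$, and with target having diagonal representable by algebraic spaces and locally of finite presentation. The machinery of scheme-theoretic images from \cite{EGstacktheoreticimages} then applies directly to produce $\cZ^{\dd,a}$ and $\cZ^{\tau,a}$ as algebraic stacks of finite type over $\cO$ (indeed, closed substacks of $\cR^{\dd,a}$). Passing to the colimit in $a$ and using that each $\cZ^{\dd,a}$ is a closed substack of $\cZ^{\dd,a+1}$ via the corresponding closed immersion for the $\cC$'s, I would verify that $\cZ^{\dd} = \dirlim_a \cZ^{\dd,a}$ is a $\varpi$-adic formal algebraic stack, with $\varpi$-adic formal algebraic structure inherited from a smooth presentation of $\cC^{\dd,\BT}$ transported through the surjective (scheme-theoretic image) map.

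Second, for equidimensionality: the morphism $\cC^{\tau,\BT,a} \to \cZ^{\tau,a}$ is proper and surjective by construction. Since $\cC^{\tau,\BT,a}$ is equidimensional of dimension $[K:\Qp]$ by the theorem on stacks of Breuil--Kisin modules recalled above, it suffices to show this morphism is generically finite (or of relative dimension zero on a dense open). This is where I expect the main technical effort: one uses that a generic Breuil--Kisin module of height $\le 1$ with descent data is determined by its associated \'etale $\varphi$-module (the functor ``invert $u$'' is generically fully faithful on the relevant substack), so fibres of $\cC^{\tau,\BT,a} \to \cZ^{\tau,a}$ are generically trivial, giving $\dim \cZ^{\tau,a} = [K:\Qp]$. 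The same argument handles $\cZ^{\dd,a}$.

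Third, reducedness of $\cZ^{\dd,1}$ and $\cZ^{\tau,1}$: since $\cC^{\tau,\BT,1}$ is reduced (by the recalled theorem), and $\cZ^{\tau,1}$ is, by definition, the scheme-theoretic image of a proper morphism from a reduced algebraic stack, it follows formally that $\cZ^{\tau,1}$ is reduced; similarly for $\cZ^{\dd,1}$.

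Finally, for the identification of $\Fpbar$-points and for the last sentence of the theorem: by construction, $\Fpbar$-points of $\cZ^{\tau,1}$ correspond to \'etale $\varphi$-modules with descent data arising from some Breuil--Kisin module in $\cC^{\tau,\BT,1}(\Fpbar)$. Under the equivalence between \'etale $\varphi$-modules with descent data from $K'$ to $K$ and $\Fpbar$-representations of $G_K$, and using that the $\Fpbar$-points of $\cC^{\tau,\BT,1}$ correspond to $\rbar$ admitting a potentially Barsotti--Tate lift of type $\tau$ (this is a known Galois-theoretic input, ultimately from the analysis of Breuil--Kisin modules of low height), we obtain the stated bijection; taking the union over $\tau$ and using \cegsBtresramlemma\ to characterize which $\rbar$ arise gives the description for $\cZ^{\dd,1}$. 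For the last sentence, since $\cZ^{\dd,1}$ is reduced, and since $\cZ^{\dd,a}$ and $\cZ^{\dd}$ have the same underlying set of points as $\cZ^{\dd,1}$ (all points being defined over characteristic-$p$ rings, and $\cZ^{\dd,1} \hookrightarrow \cZ^{\dd,a}$ is a thickening), $\cZ^{\dd,1}$ is the underlying reduced substack of each; the argument for $\cZ^{\tau,1}$ is identical. The bulk of the difficulty lies in the dimension calculation and the precise Galois-theoretic identification, both of which are carried out in \cite{cegsB}.
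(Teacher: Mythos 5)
The paper itself gives no proof of this theorem: it is a recollection, assembled from \cegsBZmainresultsplus, so the only argument on offer here is the citation. Your plan to re-derive the statements is reasonable in spirit, and parts of it are sound: reducedness of $\cZ^{\tau,1}$ and $\cZ^{\dd,1}$ does follow formally from reducedness of $\cC^{\tau,\BT,1}$ and $\cC^{\dd,\BT,1}$ (the ideal sheaf of the scheme-theoretic image is the kernel of $\cO_{\cY}\to f_*\cO_{\cX}$, which has no nilpotents when the source is reduced), and the algebraicity and finite-typeness of the $\cZ^{\dd,a}$, $\cZ^{\tau,a}$ is exactly what \cite{EGstacktheoreticimages} provides.

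Two steps, however, have genuine gaps. For equidimensionality, generic finiteness of $\cC^{\tau,\BT,a}\to\cZ^{\tau,a}$ over a dense open would only bound $\dim\cZ^{\tau,a}$ above by $[K:\Qp]$ and show that \emph{some} component attains it; it does not give equidimensionality. Worse, the map is not generically finite on every irreducible component of the source: by Theorem~\ref{thm:main thm cegsC} and Remark~\ref{rem:verticalrem}, the components $\overline{\cC}(J)$ with $J\notin\cP_\tau$ are collapsed onto substacks $\overline{\cZ}(J)$ of dimension strictly less than $[K:\Qp]$, so you must additionally show that these collapsed images are not irreducible components of $\cZ^{\tau,a}$. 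In the logic of this paper that containment is deduced \emph{from} the equidimensionality imported from \cite{cegsB}, so your route is circular at precisely this point unless you supply an independent argument (in \cite{cegsB} the dimension theory of $\cZ$ is not obtained by counting fibres of $\cC\to\cZ$). Second, the assertion that $\cZ^{\dd}$ and $\cZ^{\tau}$ are $\varpi$-adic formal algebraic stacks cannot be obtained by ``transporting a smooth presentation of $\cC^{\dd,\BT}$'' through $\cC\to\cZ$: that morphism is proper, not smooth, and being a colimit of the algebraic stacks $\cZ^{\dd,a}$ is not enough, precisely because $\cZ^{\dd,a}\into\cZ^{\dd}\times_{\cO}\cO/\varpi^a$ need not be an isomorphism (see the remark immediately following the theorem, and the appendix, whose entire purpose is to handle this issue); the correct mechanism is a criterion such as Proposition~\ref{prop:Y is p-adic formal}, applied to the proper, scheme-theoretically dominant morphism $\cC^{\dd,\BT}\to\cZ^{\dd}$. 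A smaller point: the identification of the $\Fpbar$-points of $\cC^{\tau,\BT,1}$ with the $\rbar$ admitting a potentially Barsotti--Tate lift of type~$\tau$ hides a genuine lifting statement (every $\Fpbar$-point of the special fibre must be shown to come from an $\cO_{E'}$-point, using flatness of $\cC^{\tau,\BT}$ over $\cO$), which is again one of the cited results of \cite{cegsB} rather than a formality.
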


\begin{remark} We stress that the morphism $\cZ^{\dd,a} \into \cZ^{\dd} \times_{\cO} \cO/\varpi^a$ need not be an isomorphism \emph{a priori}, and we have no reason to expect that it is. However, our results in the next section will prove that it is \emph{generically} an isomorphism for all $a \ge 1$.
\end{remark}

\subsubsection*{Versal rings and deformation rings}
Let $x$ be an $\F'$-point of $\cZ^{\tau,a}$, corresponding to the representation $\rbar : G_K \to \GL_2(\F')$. We will usually write  $R_{\rbar}^{\tau,\BT}$ for the reduced and $p$-torsion free  quotient of the universal framed deformation ring of $\rbar$ whose $\Qpbar$-points correspond
to the potentially Barsotti--Tate lifts of~$\rbar$ of
type~$\tau$. (In Section~\ref{sec:appendix on geom BM} we will denote this ring instead by $R_{\rbar,0,\tau}$, for ease of comparison with the paper \cite{geekisin}.)

It is explained in \cegsBlastsection\ that there are versal rings $R^{\tau,a}_x$ to $\cZ^{\tau,a}$ at the point $x$, such that the following holds. (These rings are denoted $R^{\tau,a}$ in \cite{cegsB};\ we include the subscript~$x$ here to emphasize the dependence on the point $x$.)

\begin{prop}[\cegsBversalprop]
  \label{cor: R tau BT is a versal ring to Z-hat} We have $\varprojlim
  R^{\tau,a}_x=R^{\tau,\BT}_{\rbar}$; thus $R^{\tau,\BT}_{\rbar}$ is a
  versal ring to ${\cZ^\tau}$ at~$x$.
\end{prop}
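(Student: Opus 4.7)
The plan is to compute versal rings to $\cZ^{\tau,a}$ at $x$ from the scheme-theoretic image construction defining $\cZ^{\tau,a}$, and then to match the inverse limit with $R^{\tau,\BT}_{\rbar}$. First, I would recall that via Fontaine's theory of \'etale $(\varphi,\Gamma)$-modules, a versal ring to $\cR^{\dd,a}$ at $x$ is identified (up to formally smooth factors) with the universal mod~$\varpi^a$ framed deformation ring $R^{\square,a}_{\rbar}$ of $\rbar$. After possibly enlarging $\F'$, choose an $\F'$-point $\tilde x$ of $\cC^{\tau,\BT,1}$ lying above $x$; such a lift exists because the proper morphism $\cC^{\tau,\BT,1} \to \cZ^{\tau,1}$ is scheme-theoretically dominant by the very construction of $\cZ^{\tau,1}$. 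Let $R^{\cC,a}_{\tilde x}$ be a versal ring to $\cC^{\tau,\BT,a}$ at~$\tilde x$. By the general theory of \cite{EGstacktheoreticimages}, $R^{\tau,a}_x$ is then realized as the formal scheme-theoretic image of $\Spf R^{\cC,a}_{\tilde x}$ inside $\Spf R^{\square,a}_{\rbar}$: that is, as the quotient of $R^{\square,a}_{\rbar}$ by the kernel of the induced map to $R^{\cC,a}_{\tilde x}$.

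Next, I would identify this kernel with the ideal defining type-$\tau$ potentially Barsotti--Tate lifts. On the one hand, the map $R^{\square,a}_{\rbar} \to R^{\cC,a}_{\tilde x}$ factors through $R^{\tau,\BT}_{\rbar}/\varpi^a$, because every deformation of $\tilde x$ in $\cC^{\tau,\BT,a}$ has underlying Galois representation which is a type-$\tau$ potentially Barsotti--Tate deformation of $\rbar$ mod~$\varpi^a$. On the other hand, the resulting map $R^{\tau,\BT}_{\rbar}/\varpi^a \to R^{\cC,a}_{\tilde x}$ becomes injective in the $\varpi$-adic inverse limit: $R^{\tau,\BT}_{\rbar}$ is $\cO$-flat and reduced, and its $\Qpbar$-points are detected by their associated Breuil--Kisin modules with descent data (by the existence of the Breuil--Kisin functor), so any element of the limiting kernel would be a global function on $\Spec R^{\tau,\BT}_{\rbar}$ vanishing on all $\Qpbar$-points and hence zero. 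Taking inverse limits over $a$ and using the $\varpi$-adic completeness of $R^{\tau,\BT}_{\rbar}$ yields $\varprojlim_a R^{\tau,a}_x = R^{\tau,\BT}_{\rbar}$. The ``thus'' then follows from the general fact that, since $\cZ^\tau = \dirlim_a \cZ^{\tau,a}$, a versal ring to $\cZ^\tau$ at $x$ is exactly the inverse limit of compatible versal rings to the individual algebraic stacks $\cZ^{\tau,a}$.

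The main obstacle is the injectivity step above: the identification $R^{\tau,a}_x = R^{\tau,\BT}_{\rbar}/\varpi^a$ need not hold at any finite level~$a$ (this is consistent with the remark preceding the proposition that $\cZ^{\tau,a}$ is only a closed substack of $\cZ^\tau \times_{\cO} \cO/\varpi^a$), and one must argue that the strict-quotient phenomenon at finite level washes out in the $\varpi$-adic limit. This is where the $\cO$-flatness and Cohen--Macaulay/reducedness properties of $\cC^{\tau,\BT}$ recalled in the first theorem of Section~\ref{sec:recollections} play the essential role, ensuring that the formation of formal scheme-theoretic image is sufficiently well-behaved under $\varpi$-adic completion to recover exactly $R^{\tau,\BT}_{\rbar}$ in the limit.
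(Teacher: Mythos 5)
A preliminary remark: this paper does not prove the proposition at all — it is quoted verbatim from \cegsBversalprop, and the surrounding text of Section~\ref{sec:recollections} only records it — so there is no in-paper proof to compare against; what follows assesses your argument on its own terms, against the argument in \cite{cegsB} that it would have to replace. Your first step already contains a genuine error. The ambient stack $\cR^{\dd,a}$ is a stack of \'etale $\varphi$-modules with descent data (there is no $\Gamma$), and its points over Artinian coefficients correspond to representations of the absolute Galois group of an infinite Kummer-type extension $K_\infty$ of $K$, not of $G_K$; the associated deformation problem has infinite-dimensional tangent space, so a versal ring to $\cR^{\dd,a}$ at $x$ is not Noetherian and is certainly not $R^{\square,a}_{\rbar}$ up to formally smooth factors. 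This is precisely why $\cR^{\dd,a}$ is not algebraic and why the machinery of \cite{EGstacktheoreticimages} is needed, and it is why the introduction of this paper insists that the stacks $\cZ^{\tau}$ are \emph{literally different} from the $(\varphi,\Gamma)$-module stacks of \cite{EGmoduli}. The statement that the versal rings $R^{\tau,a}_x$ of the image stacks are even quotients of the $G_K$-framed deformation ring of $\rbar$ — i.e.\ that the relevant families of \'etale $\varphi$-modules carry canonical $G_K$-actions extending the $G_{K_\infty}$-action — is a substantive part of what \cite{cegsB} proves in its Sections 5.1--5.2, not something that can be invoked as ``Fontaine's theory.''

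Beyond this, the mechanism you propose does not compute the right object. The versal ring of a scheme-theoretic image is governed by the whole proper fibre $\cC^{\tau,\BT,a}\times_{\cR^{\dd,a}}\Spf R^{\dd,a}_x$ (via the theorem on formal functions), not by the versal ring $R^{\cC,a}_{\tilde x}$ at one chosen lift $\tilde x$: the fibre over $x$ may contain several Breuil--Kisin modules (Theorem~\ref{thm:main thm cegsC}(4) singles out points with a \emph{unique} such model exactly because uniqueness fails in general), and your later injectivity argument breaks for the same reason, since a $\Qpbar$-point of $R^{\tau,\BT}_{\rbar}$ need not specialise to the particular $\tilde x$ you fixed. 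The claimed factorisation of $R^{\square,a}_{\rbar}\to R^{\cC,a}_{\tilde x}$ through $R^{\tau,\BT}_{\rbar}/\varpi^a$ is also unjustified: ``potentially Barsotti--Tate of type $\tau$'' is not a condition on $\varpi$-power-torsion deformations, and an Artinian point equipped with a Breuil--Kisin module of type $\tau$ has no tautological reason to factor through $R^{\tau,\BT}_{\rbar}/\varpi^a$ — indeed the failure of $\cZ^{\tau,a}\into\cZ^{\tau}\times_{\cO}\cO/\varpi^a$ to be an isomorphism is a symptom of exactly this. Finally, the crux — that the finite-level discrepancies disappear in the $\varpi$-adic limit — is the place where you only gesture (``sufficiently well-behaved under completion''). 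The actual argument of \cite[Prop.~5.2.19]{cegsB} and its supporting lemmas establishes that $\varprojlim_a R^{\tau,a}_x$ is an $\cO$-flat, reduced quotient of $R^{\square}_{\rbar}$ whose $\Qpbar$-points are precisely the potentially Barsotti--Tate lifts of type $\tau$ (the comparison with Kisin's projective moduli space of Breuil--Kisin modules over $\Spf R^{\square}_{\rbar}$ is the key mechanism), and then concludes from the characterisation of $R^{\tau,\BT}_{\rbar}$ as the unique such quotient. Your outline shares that endgame in spirit, but the inputs that make it run are missing.
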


Similarly there is a versal ring $R^{\dd,a}_x$ to $\cZ^{\dd,a}$ at $x$, and each $R^{\tau,a}_x$ is a quotient of $R^{\dd,a}_x$.

\subsubsection*{Irreducible components of $\cC^{\tau,\BT,1}$ and $\cZ^{\tau,1}$}
Fix a tame type $\tau$, and recall that we set $f' = f$ if the type $\tau$ is principal series, while $f' = 2f$ if the type $\tau$ is cuspidal. We say that a subset $J \subset \Z/f'\Z$ is a \emph{profile} if:
\begin{itemize}
  \item $\tau$ is scalar and $J = \varnothing$, 
  \item $\tau$ is a non-scalar principal series type and $J$ is arbitrary, or
  \item $\tau$ is cupsidal and $J$ has the property that $i \in J$ if and only if $i+f \not\in J$.
\end{itemize}
 Thus there are exactly $2^f$ profiles if $\tau$ is non-scalar. The set $\cP_{\tau}$ introduced in Section~\ref{subsec: notation} is a subset of the set of profiles.

To each profile $J$, the discussion in \cegsCextensionssec\ associates a closed substack $\overline{\mathcal{C}}(J)$ of $\cC^{\tau,\BT,1}$. The stack $\overline{\cZ}(J)$ is then defined to be the scheme-theoretic image of $\overline{\cC}(J)$ under the  map $\cC^{\tau,\BT,1} \to \cZ^{\tau,1}$.

The following description of the irreducible components of $\cC^{\tau,\BT,1}$ and $\cZ^{\tau,1}$ is proved in \cegsCcomponentdescription;\  the description of the components of $\cZ^{\tau,1}$ in part (3) of the theorem is analogous to the description of the components of $\cX_{2,\red}$ of Theorem~\ref{thm:intro-thm-EG}.

\begin{thm}\label{thm:main thm cegsC} The irreducible components of $\cC^{\tau,\BT,1}$ and $\cZ^{\tau,1}$ are as follows. 
\begin{enumerate} 
  \item  The irreducible
    components of~$\cC^{\tau,1}$ are precisely the~$\overline{\cC}(J)$
    for profiles~$J$, and if $J\ne J'$ then~$\overline{\cC}(J)\ne\overline{\cC}(J')$.

 \item The irreducible
    components of~$\cZ^{\tau,1}$ are precisely the~$\overline{\cZ}(J)$
    for profiles~$J\in\cP_\tau$, and if $J\ne J'$ then~$\overline{\cZ}(J)\ne\overline{\cZ}(J')$.

  \item For each $J \in \cP_{\tau}$ there is a dense open substack $\cU$ of 
$\overline{\cC}(J)$ such that the  map $\overline{\cC}(J) 
\to \overline{\cZ}(J)$ restricts to an open immersion on $\cU$.

 \item 
  For each $J\in\cP_\tau$, there is a dense set of finite type
  points of $\overline{\cZ}(J)$ with the property that the corresponding Galois
  representations have $\sigmabar(\tau)_J$ as a Serre weight, and which
  furthermore admit a unique Breuil--Kisin model of type~$\tau$ as defined in \cegsCmodeldefn.
{}
\end{enumerate}
\end{thm}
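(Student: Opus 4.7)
The proof will proceed by constructing the substacks $\overline{\cC}(J)$ explicitly as stacks of extensions of specified rank-one Breuil--Kisin modules with descent data, and then leveraging the dimension and reducedness input from the preceding theorem together with a Galois-theoretic analysis of generic points.

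The first step is part (1). For each shape $J$, I would parameterize the rank-one Breuil--Kisin modules of height at most one with descent data of the correct type so that an extension has descent data of type $\tau$; the shape $J$ records for each embedding whether the ``first'' or ``second'' inertial character appears in the sub or quotient. One then defines $\overline{\cC}(J)$ as (the scheme-theoretic image of) the stack of such extensions, obtaining an irreducible closed substack whose dimension can be computed to equal $[K:\Qp]$ by an explicit parameter count (choices of extension classes, scaling, and the rank-one inputs themselves). Since $\cC^{\tau,\BT,1}$ is equidimensional of dimension $[K:\Qp]$ by the previous theorem, each $\overline{\cC}(J)$ must be a union of irreducible components. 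To show that every irreducible component arises this way, I would argue that the generic point of any component has a Breuil--Kisin module whose graded pieces (after base change to an algebraic closure) must be rank-one objects with descent data of a type compatible with $\tau$; the possible such decompositions are indexed exactly by the shapes. Finally distinctness of the $\overline{\cC}(J)$ follows by showing that at the generic point of $\overline{\cC}(J)$, the rank-one sub is rigid and recovers $J$.

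For part (2), the irreducibility of $\overline{\cZ}(J)$ is automatic as the scheme-theoretic image of the irreducible $\overline{\cC}(J)$. The content is (a) showing $\overline{\cZ}(J) = \overline{\cZ}(J')$ for some $J \ne J'$ precisely when one of $J, J'$ fails to lie in $\cP_\tau$, and (b) that distinct elements of $\cP_\tau$ give distinct components. For (a), shapes not satisfying the $\cP_\tau$ conditions correspond to extensions that, while giving distinct Breuil--Kisin modules, produce \'etale $\varphi$-modules isomorphic to those already arising from some $J' \in \cP_\tau$ (essentially because a forbidden boundary condition $\gamma_i = 0$ or $p-1$ forces a different shape to be realized on the Galois side). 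For (b), I would use part (4): if two shapes gave the same component, then the generic Galois representation on that component would have both $\sigmabar(\tau)_J$ and $\sigmabar(\tau)_{J'}$ as Serre weights in $W(\rbar)$, but one shows that along a suitable dense locus the Serre weight can be pinned down uniquely, contradicting $J \ne J'$ in $\cP_\tau$ (which gives distinct weights by the parametrization recalled in Section~\ref{subsec: notation}).

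Part (3) amounts to showing that on a dense open of $\overline{\cC}(J)$, the Breuil--Kisin module is the unique one of type $\tau$ giving rise to its \'etale $\varphi$-module. Generically the extension is maximally non-split, the rank-one sub-Breuil--Kisin module is forced by the shape, and any other Breuil--Kisin model of type $\tau$ would contradict this rigidity by a $\Hom$-computation in the category of Breuil--Kisin modules with descent data. Part (4) is then established by producing for each $J \in \cP_\tau$ an explicit family of such maximally non-split extensions, computing the associated Galois representation via the functor to \'etale $\varphi$-modules, and checking directly from the explicit description of $W(\rbar)$ (or from the recipe in the inertial local Langlands dictionary) that $\sigmabar(\tau)_J \in W(\rbar)$.

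The main obstacle is part (2)(b): controlling the fibres of the proper map $\cC^{\tau,\BT,1} \to \cZ^{\tau,1}$ well enough to distinguish components after passage to \'etale $\varphi$-modules. Two \emph{a priori} distinct shapes could in principle yield the same scheme-theoretic image, and ruling this out requires the precise Galois-theoretic identification of Serre weights on a dense locus, which is the substance of part (4). So parts (2) and (4) will need to be proved together, with part (3) providing the key bridge between the geometry of $\overline{\cC}(J)$ and that of $\overline{\cZ}(J)$.
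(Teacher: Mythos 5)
The first thing to say is that this theorem is not proved in the present paper at all: it is recalled from the companion paper, with the proof cited as \cegsCcomponentdescription, and the only argument supplied here is Remark~\ref{rem:dd-field}, which uses \cegsBfieldchange\ (plus the easier statement for \'etale $\varphi$-modules) to pass from descent data over $K(\pi^{1/(p^f-1)})$ to descent data over $K'$. So your proposal cannot be matched against an argument in this paper; what it attempts to reconstruct is the content of \cite{cegsC} itself. In broad outline your plan does resemble that programme (components $\overline{\cC}(J)$ built from families of extensions of rank-one Breuil--Kisin modules with descent data, a dimension count against the equidimensionality of $\cC^{\tau,\BT,1}$, passage to scheme-theoretic images, and identification of the generic Galois representations and their Serre weights), but at this level of detail it is an outline rather than a proof: the dimension count for the extension families, the rigidity/uniqueness of the Breuil--Kisin model on a dense open (part (3), where ``open immersion'' is strictly stronger than the monomorphism your $\Hom$-argument would give), and the computation of $W(\rbar)$ for the representations in part (4) are exactly the technical heart of \cite{cegsC} and are asserted rather than established.

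There is also a concrete misstatement in your treatment of part (2). You claim that $\overline{\cZ}(J)=\overline{\cZ}(J')$ for some $J\ne J'$ precisely when one of $J,J'$ fails to lie in $\cP_\tau$, with the mechanism being that a forbidden boundary value of $\gamma_i$ ``forces a different shape on the Galois side.'' That is not what happens, and it is not what part (2) asserts. For $J\notin\cP_\tau$ the correct phenomenon (see Remark~\ref{rem:verticalrem}, quoting \cegsCverticalcomponents) is that the fibres of $\overline{\cC}(J)\to\overline{\cZ}(J)$ become positive-dimensional — there are extra families of Breuil--Kisin modules with the same associated \'etale $\varphi$-module — so $\overline{\cZ}(J)$ has dimension strictly smaller than $[K:\Qp]$ and is \emph{properly contained} in some component of $\cZ^{\tau,1}$; it need not coincide with any $\overline{\cZ}(J')$ for $J'\in\cP_\tau$. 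Your argument for distinctness of the $\overline{\cZ}(J)$ for $J\in\cP_\tau$ via part (4) is in the right spirit (it is essentially how the present paper distinguishes components in the proof of Theorem~\ref{thm:stack version of geometric Breuil--Mezard}), but as you note it requires pinning down the Serre weight on a dense locus, which again is the substance of the cited results rather than something your sketch supplies.
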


\begin{remark}\label{rem:maximalfamilies} If $\sigmabar(\tau)_J = \sigmabar_{\vec{t},\vec{s}}$, then the dense set of finite type points of $\overline{\cZ}(J)$ produced in the proof of \cegsCbigfamilies, as claimed in 
Theorem~\ref{thm:main thm cegsC}(4), consists of points corresponding to reducible representations $\rbar$ such that $\rbar|_{I_K}$ is an extension of $\overline{\varepsilon}^{-1} \prod_{i=0}^{f-1} \omega_{\sigma_i}^{t_i}$ by $\prod_{i=0}^{f-1} \omega_{\sigma_i}^{s_i + t_i}$ (necessarily peu ramifi\'e in case the ratio of the two characters is cyclotomic).
\end{remark}

\begin{remark}\label{rem:verticalrem}
We emphasize in Theorem~\ref{thm:main thm cegsC} that the components of $\cZ^{\tau,1}$ are indexed by profiles $J \in \cP_{\tau}$, \emph{not} by all profiles. If $J \not\in \cP_{\tau}$, then by \cegsCverticalcomponents\ the stack $\overline{\cZ}(J)$ has dimension strictly smaller than $[K:\Qp]$, and so is properly contained in some component of $\cZ^{\tau,1}$.
\end{remark}

\begin{remark}\label{rem:dd-field} Strictly speaking, in the principal series case Theorem~\ref{thm:main thm cegsC} is proved in \cite{cegsC} for stacks of Breuil--Kisin modules and of \'etale $\varphi$-modules with descent data from $K(\pi^{1/(p^f-1)})$ to $K$, rather than from our $K'$ to $K$. 
But in fact we can replace $K(\pi^{1/(p^f-1)})$ with any extension of prime-to-$p$ degree that remains Galois over $K$, such as $K'$, without changing the resulting stacks.  This follows from \cegsBfieldchange, together with the isomorphism $\cR^{\dd}_{K(\pi^{1/(p^f-1)})} \toisom \cR^{\dd}_{K'}$ of stacks of \'etale $\varphi$-modules (which is easier  than \emph{loc. cit.}, since multiplication by $u$ on an \'etale $\varphi$-module is bijective, and can be proved by the same argument as in \cite[Cor.~2.3.21]{EGmoduli}).
\end{remark}

\section{Generic reducedness of \texorpdfstring{$\Spec R^{\tau,\BT}_{\rbar}/\varpi$}{potentially Barsotti--Tate
    deformation ring}}
\label{subsec: generically
  reduced}

Fix a Galois representation $\rbar:G_K\to\GL_2(\F')$, where $\F'/\F$
is a finite extension. Our goal in this section is to prove that the scheme $\Spec R^{\tau,\BT}_{\rbar}/\varpi$ is generically reduced;\ this will be a key ingredient in the proof of our geometric
Breuil--M\'ezard result in Section~\ref{sec: picture}.  Recall that a scheme is
generically reduced if it contains an open reduced subscheme whose
underlying topological space is dense;\ in the case of a Noetherian
affine scheme $\Spec A$, this is equivalent to requiring
that the localisation of $A$ at each of its minimal
primes is reduced.

We may of course suppose that $R^{\tau,\BT}_{\rbar} \neq 0$, so that $\rbar$ has a potentially Barsotti--Tate lift of type $\tau$, and so corresponds to a finite type point $x : \Spec\F'\to\cZ^{\tau,a}$ for any $a \ge 1$. 
It follows from Proposition~\ref{cor: R tau BT is a versal
  ring to Z-hat} that $\Spec R^{\tau,a}_x$ is a closed subscheme of
$\Spec R^{\tau,\BT}_{\rbar}/\varpi^a$, but we have no reason to
believe that equality holds.
It follows from Theorem~\ref{lem: C 1 and Z 1 are the underlying reduced substacks}(2),
together with Lemma~\ref{lem: generic reducedness passes to completions} below, that
$\Spec R^{\tau,1}_x$ is the underlying reduced subscheme
of~$\Spec R^{\tau,\BT}_{\rbar}/\varpi$, so that equality holds in the case $a = 1$
if and only
if~$\Spec R^{\tau,\BT}_{\rbar}/\varpi$ is reduced;\ however, again, we have no
reason to believe that this holds in general. 

Nevertheless it \emph{is} the case that 
$\Spec R^{\tau,\BT}_{\rbar}/\varpi$  is generically
reduced;\ \emph{cf.}\ Theorem~\ref{prop: generically reduced special fibre deformation ring} below. We will deduce
 Theorem~\ref{prop: generically reduced special fibre deformation ring}
 from the following global statement.

  \begin{prop}
    \label{prop: existence of dense open substack of R such that C is a mono}
Let~$\tau$ be a tame type. There is a dense open substack $\cU$ of $\cZ^{\tau}$
      such that $\cU_{/\F}$ is reduced.
  \end{prop}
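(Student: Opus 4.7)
The plan is to exhibit a dense set of finite type points $y \in \cZ^{\tau,1}$ at which the complete local ring of $\cZ^{\tau}_{/\F}$ is reduced; since the reduced locus is open on the Noetherian algebraic stack $\cZ^{\tau}_{/\F}$, this yields the desired dense open substack $\cU$. For each $J \in \cP_\tau$, Theorem~\ref{thm:main thm cegsC}(3) supplies a dense open $\cU_J \subseteq \overline{\cC}(J)$ on which $\overline{\cC}(J) \to \overline{\cZ}(J)$ restricts to an open immersion. I further shrink $\cU_J$ to lie in the smooth locus of $\cC^{\tau,\BT,1}$ (still dense, as $\cC^{\tau,\BT,1}$ is reduced and equidimensional), so that points of $\cU_J$ lie on a unique component of $\cC^{\tau,\BT,1}$; let $\cV_J \subseteq \overline{\cZ}(J)$ be its image, shrunk further so that its points lie on a unique component of $\cZ^{\tau,1}$. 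Set $\cV := \bigcup_J \cV_J$.

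For $y \in \cV$, fix a preimage $x$ and let $\rbar : G_K \to \GL_2(\F')$ be the Galois representation attached to $y$. Write $R := R^{\tau,\BT}_{\rbar}$, which is the versal ring to $\cZ^{\tau}$ at $y$ by Proposition~\ref{cor: R tau BT is a versal ring to Z-hat}, and let $S$ denote the versal ring to $\cC^{\tau,\BT}$ at $x$; the morphism $\cC^{\tau,\BT} \to \cZ^{\tau}$ induces a ring homomorphism $R \to S$. Combining the open immersion with the smoothness hypothesis identifies $S/\varpi$ with the versal ring $R^{\tau,1}_y$ to $\cZ^{\tau,1}$ at $y$, so composing with the quotient $R \twoheadrightarrow R^{\tau,1}_y$ shows that $R \to S/\varpi$ is surjective. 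Because $\mathfrak{m}_R S = \mathfrak{m}_S$ and because $R, S$ share the residue field $\F'$, a complete-local Nakayama argument upgrades this to surjectivity of $R \twoheadrightarrow S$.

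It remains to show $R \xrightarrow{\sim} S$; granting this, $R/\varpi \cong S/\varpi$ is a regular local domain (since $x$ lies in the smooth locus of $\cC^{\tau,\BT,1}$), hence reduced, so the completed local ring of $\cZ^{\tau}_{/\F}$ at $y$ is reduced by faithful flatness of completion. Set $I := \ker(R \to S)$. Both $R$ and $S$ are $\cO$-flat (the former by the ``reduced and $\varpi$-torsion-free'' construction of $R^{\tau,\BT}_{\rbar}$, the latter by flatness of $\cC^{\tau,\BT}$), so $I$ is $\varpi$-torsion free, and it suffices to check that $I[1/\varpi] = 0$, i.e.\ that $R[1/\varpi] \to S[1/\varpi]$ is an isomorphism. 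This is the main obstacle: geometrically it amounts to the assertion that $\cC^{\tau,\BT} \to \cZ^{\tau}$ is an isomorphism on rigid-analytic generic fibres, reflecting the standard $p$-adic Hodge-theoretic fact that a potentially Barsotti--Tate Galois representation of fixed type over a characteristic-zero base determines its Breuil--Kisin module uniquely. A rigorous derivation of this generic-fibre isomorphism within the paper's formalism---for instance, via a comparison of $\Qpbar$-points combined with the formal smoothness and irreducibility of $R[1/\varpi]$---will complete the argument.
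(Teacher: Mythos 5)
Your argument stands or falls on the step you explicitly defer: the injectivity of $R^{\tau,\BT}_{\rbar}\to S$, equivalently that $R^{\tau,\BT}_{\rbar}[1/\varpi]\to S[1/\varpi]$ is an isomorphism. This is not a routine verification to be filled in later; it is the entire content of the proposition at the chosen points. Everything preceding it only shows that $R^{\tau,\BT}_{\rbar}/\varpi$ surjects onto the regular ring $S/\varpi\cong R^{\tau,1}_x$, i.e.\ it re-derives that the reduced quotient of $R^{\tau,\BT}_{\rbar}/\varpi$ is $R^{\tau,1}_x$ (which already follows from Theorem~\ref{lem: C 1 and Z 1 are the underlying reduced substacks}(2) together with Lemma~\ref{lem: generic reducedness passes to completions}); without the injectivity you cannot rule out nilpotents in $R^{\tau,\BT}_{\rbar}/\varpi$, which is exactly what is at stake. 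The suggested fallback via ``irreducibility of $R^{\tau,\BT}_{\rbar}[1/\varpi]$'' is also problematic: potentially Barsotti--Tate deformation rings of tame type are in general not irreducible (their special-fibre cycles are sums of several components), so irreducibility at your special points would itself need an argument. A Kisin-style theorem that $\cC^{\tau,\BT}\to\cZ^{\tau}$ becomes an isomorphism generically in characteristic zero (with descent data) may well be true, but it is neither proved in your proposal nor among the inputs this paper permits itself; moreover, since $R^{\tau,\BT}_{\rbar}$ and $S$ are only versal, even the existence of a compatible map $R\to S$ and the identification of $S/\varpi$ with $R^{\tau,1}_x$ (hence the surjectivity claim) require compatibility checks that you pass over.

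By contrast, the paper's proof never leaves the special fibre and never invokes a generic-fibre comparison. Using Theorem~\ref{thm:main thm cegsC}(2),(3) and Remark~\ref{rem:verticalrem}, it builds a dense open $\cU^1\subseteq\cZ^{\tau,1}$ over which the proper, scheme-theoretically dominant morphism $\cC^{\tau,\BT,1}\to\cZ^{\tau,1}$ is a monomorphism (one removes, using properness, the image of the complement of the good locus in each $\overline{\cC}(J)$, and uses that the resulting opens $\cU(J)$ are pairwise disjoint). The decisive input is then the appendix Lemma~\ref{prop:opens}: over such an open, a proper scheme-theoretically dominant monomorphism is an isomorphism, and the lemma yields that $\cU\times_{\cZ^{\tau}}\cZ^{\tau,a}\hookrightarrow\cU\times_{\cO}\cO/\varpi^a$ is an isomorphism for all $a$; taking $a=1$ identifies $\cU_{/\F}$ with an open substack of the reduced stack $\cZ^{\tau,1}$, giving reducedness. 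That monomorphism-plus-properness mechanism is precisely the substitute for your missing generic-fibre isomorphism; if you wish to keep your versal-ring formulation, the way to close the gap is to prove $R\to S$ is an isomorphism by this finite-level argument rather than by inverting $\varpi$.
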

  \begin{proof}
	  The proposition will follow from an application of
	  Lemma~\ref{prop:opens},
	  and the key to this application will be to find a
	  candidate open substack $\cU^1$ of $\cZ^{\tau,1}$,
which we will do using our study of the irreducible components
of $\cC^{\tau,\BT,1}$ and $\cZ^{\tau,1}$.  

  Recall that, for each profile $J \in \cP_\tau$,
 we let $\overline{\cZ}(J)$ denote the scheme-theoretic image of
 $\overline{\cC}(J)$ under the proper morphism
 $\cC^{\tau,\BT,1} \to \cZ^{\tau,1}$. 
 Each $\overline{\cZ}(J)$ 
 is a closed substack of $\cZ^{\tau,1}$, and so,
  if we let $\cV(J)$ be the complement in $\cZ^{\tau,1}$ of the
  union of the $\overline{\cZ}(J')$ for all profiles $J'\ne J$ then $\cV(J)$ is
  a dense open substack of~$\overline{\cZ}(J)$, by Theorem~\ref{thm:main thm cegsC}(2) and Remark~\ref{rem:verticalrem} (the former in consideration of $J' \in \cP_{\tau}$, the latter for $J' \not\in \cP_{\tau}$).
The preimage $\cW(J)$ of $\cV(J)$ in $\cC^{\tau,\BT,1}$ is therefore a
dense open
substack of $\overline{\cC}(J)$. 
Possibly shrinking $\cW(J)$ further, we may
  suppose by Theorem~\ref{thm:main thm cegsC}(3) 
that the morphism $\cW(J)\to
\cZ^{\tau,1}$ is an open immersion. %(and, in particular, a monomorphism). 

Write $| \cdot |$ for the underlying topological space of a stack.
The complement $|\overline{\cC}(J)|\setminus |\cW(J)|$ is a closed subset
of $|\overline{\cC}(J)|$, and thus of
$|\cC^{\tau,\BT,1}|$, and its image under the proper morphism
$\cC^{\tau,\BT,1}\to \cZ^{\tau,1}$ is a closed
subset of $|\cZ^{\tau,1}|$, which is (e.g.\ for dimension reasons\footnote{Choose,
as we may,
a surjective smooth morphism $U \to \overline{\cZ}(J)$ with $U$ a finite type $\F$-scheme.
%since $\cZ(J)$ is irreducible, we may (and do) further assume (for simplicity) that $U$ is equidimensional.
Let $V := \overline{\cC}(J)\times_{\overline{\cZ}(J)} U$. Then the projection
$V \to \overline{\cC}(J)$
is again smooth and surjective, while the projection
$f:V \to U$ is representable by schemes and proper; 
in particular, $V$ is also %an equidimensional
a finite type $\F$-scheme. 
Let $W:= \cW(J) \times_{\overline{\cZ}(J)} U.$  Then $W$ is
a dense open subscheme of~$U$, equipped with a section $W \to V$ which realizes it
as a dense open subscheme of $V$ as well. 
Write $Y := V\setminus W$ (a closed subset of~$V$) and $X = f(Y)$
%let $X$ denote the image of $Y$ in~$U$
(a closed subset of~$U$). 
If $T$ is any irreducible component of~$V$, %with image $S = f(T) \subseteq U$,
then $\dim T = \dim (T\cap W) = \dim \bigl(f(T) \cap W\bigr) = \dim f(T)$
(in particular, $f(T)$ is an irreducible component of~$U$; and each irreducible
component of $U$ arises in this manner),
while $\dim (T\cap Y) < \dim T.$ 
Thus also $\dim f(T\cap Y) <  \dim T = \dim f(T)$, so that $f(T\cap Y)$ is a proper
closed subset of~$f(T)$.  Since $Y = \bigcup_T (T\cap Y),$
we see that $X = \bigcup_T f(T\cap Y)$ is a proper closed subset of~$U$,   
whose complement $U'$ is dense in~$U$.
%Thus $\dim U = \dim V = \dim W,$
%and so $Y:= V \setminus W$ is a closed subset of $V$ of dimension strictly less than~$\dim V$.
%The image $X$ of $Y$ in $U$ is then a closed subset of~$U$ of dimension strictly less than~$\dim U$,
%so that $U' := U \setminus X$ is a dense open subset of~$U$.    
The image of $U'$ is then a dense (equivalently, non-empty) open 
substack of $\overline{\cZ}(J)$; and this image coincides with 
the complement in~$\overline{\cZ}(J)$ of the image of $\overline{\cC}(J) \setminus \cW(J)$
(since $U'$ is the preimage of this complement, by construction).})
a proper closed subset of~$|\overline{\cZ}(J)|$; so if we let
$\cU(J)$ be the complement in $\cV(J)$ of this image, then $\cU(J)$ is
open and dense in~$\overline{\cZ}(J)$, 
and the morphism $\cC^{\tau,\BT,1}\times_{\cZ^{\tau,1}}
\cU(J)\to \cU(J)$ is a monomorphism. Set $\cU^1=\cup_J \cU(J)$.
Since the $\cU(J)$ are pairwise disjoint
by construction, 
\numequation\label{eq:monomorphism1}
\cC^{\tau,\BT,1}\times_{\cZ^{\tau,1}}
\cU^1\to \cU^1\end{equation} is again a monomorphism. By construction (taking into account
Theorem~\ref{thm:main thm cegsC}(2)), $\cU^1$ is dense in $\cZ^{\tau,1}$.

Now let $\cU$ denote the open substack of $\cZ^{\tau}$ corresponding
to $\cU^1$.  Since $|\cZ^{\tau}| = |\cZ^{\tau,1}|$,
we see that $\cU$ is dense in $\cZ^\tau$. We have seen in the previous
paragraph that the statement of Lemma~\ref{prop:opens}~(5) holds
(taking~$a=1$, $\cX=\cC^{\tau,\BT}$, and~$\cY=\cZ^{\tau}$); so
Lemma~\ref{prop:opens} implies that, for each $a \geq 1$,
the closed immersion
      \numequation\label{eq:monomorphism2} \cU\times_{\cZ^{\tau}}\cZ^{\tau,a} \hookrightarrow \cU\times_{\cO} \cO/\varpi^a \end{equation}
      is an isomorphism.

      In particular,
      since the closed immersion $\cU^1 = \cU\times_{\cZ^{\tau}} \cZ^{\tau,1}
      \to \cU_{/\F}$ is an isomorphism, 
      we may regard $\cU_{/\F}$ as an open substack
      of $\cZ^{\tau,1}$.  Since $\cZ^{\tau,1}$ is
      reduced,
      by Theorem~\ref{lem: C 1 and Z 1 are the underlying reduced substacks}(2),
    so is its open substack $\cU_{/\F}$.
    This completes the proof of the proposition.
  \end{proof}

\begin{cor}
    \label{cor: existence of dense open substack of Z with unique
      Kisin module}
Let~$\tau$ be a tame type. There is a dense open substack $\cU$ of
$\cZ^{\tau}$ such that we have an isomorphism
	$\cC^{\tau,\BT} \times_{\cZ^{\tau}} \cU \iso \cU,$
	as well as isomorphisms
	$$\cU\times_{\cZ^{\tau}} \cC^{\tau,\BT, a} \iso 
	\cU\times_{\cZ^{\tau}}\cZ^{\tau,a}
	\iso \cU\times_{\cO} \cO/\varpi^a,$$
	for each $a \geq 1$.
  \end{cor}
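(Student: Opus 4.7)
The plan is to re-use the open substack $\cU \subset \cZ^{\tau}$ (with $\cU^1 \subset \cZ^{\tau,1}$ its reduction) constructed in the proof of Proposition~\ref{prop: existence of dense open substack of R such that C is a mono}, and to upgrade the monomorphism produced there to a full isomorphism. First I would observe that the base-changed morphism $\cC^{\tau,\BT,1}\times_{\cZ^{\tau,1}}\cU^1 \to \cU^1$ is proper (as the base change of the proper morphism $\cC^{\tau,\BT,1}\to \cZ^{\tau,1}$) and a monomorphism (by the construction of $\cU^1$ in the proof of the proposition), and is therefore a closed immersion. Since $\cZ^{\tau,1}$ is defined as the scheme-theoretic image of the proper map from $\cC^{\tau,\BT,1}$, that map is surjective on underlying topological spaces, and so is its base change to~$\cU^1$. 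A surjective closed immersion of reduced algebraic stacks is an isomorphism, and both source and target are reduced, by Theorem~\ref{lem: C 1 and Z 1 are the underlying reduced substacks}(2) and by the corresponding reducedness of $\cC^{\tau,\BT,1}$ recalled in Section~\ref{sec:recollections}. Thus
$$\cC^{\tau,\BT,1}\times_{\cZ^{\tau,1}}\cU^1 \iso \cU^1.$$

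Next I would propagate this isomorphism across the $\varpi$-adic tower. A further application of Lemma~\ref{prop:opens}, now with our level-$1$ isomorphism in place of the monomorphism used in the proposition, should yield
$$\cU\times_{\cZ^{\tau}}\cC^{\tau,\BT,a} \iso \cU\times_{\cZ^{\tau}}\cZ^{\tau,a}$$
for every $a \ge 1$. Combined with the isomorphism $\cU\times_{\cZ^{\tau}}\cZ^{\tau,a} \iso \cU\times_{\cO}\cO/\varpi^a$ already furnished by the proposition, this gives the finite-level chain of isomorphisms in the statement of the corollary. Finally, passing to the direct limit over $a$ — using $\cC^{\tau,\BT} = \dirlim_a \cC^{\tau,\BT,a}$ and the analogous identity for $\cZ^{\tau}$ — yields the isomorphism $\cC^{\tau,\BT}\times_{\cZ^{\tau}}\cU \iso \cU$.

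I do not anticipate a serious obstacle: the substance of the corollary is really the level-$1$ isomorphism, and once that is in hand Lemma~\ref{prop:opens} is perfectly tuned to propagate the structure up the $\varpi$-adic tower exactly as it did in the proof of the proposition. The only point requiring care is to verify that the relevant formulation of the lemma applies when the hypothesis is strengthened from a monomorphism to an isomorphism at level $1$, which should follow formally from the proof of that lemma.
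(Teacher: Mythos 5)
Your proposal is correct and follows essentially the same route as the paper, whose proof simply combines Lemma~\ref{prop:opens} with the construction in the proof of Proposition~\ref{prop: existence of dense open substack of R such that C is a mono}: the level-$1$ monomorphism established there is exactly condition~(5) of the lemma, which already yields all the stated isomorphisms (conditions~(2) and~(4) plus the final clause identifying $\cU\times_{\cZ^\tau}\cZ^{\tau,a}$ with $\cU\times_{\cO}\cO/\varpi^a$). Your preliminary upgrade of the level-$1$ monomorphism to an isomorphism (properness, surjectivity onto the scheme-theoretic image, and reducedness of $\cZ^{\tau,1}$) is valid but redundant, since the implication $(5)\Rightarrow(6)$ in Lemma~\ref{prop:opens} accomplishes this via scheme-theoretic dominance under flat base change, without appealing to reducedness.
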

  \begin{proof}Taking $\cU$ as constructed in the proof of Proposition~\ref{prop: existence of dense open substack of R such that C is a mono}, this follows from Proposition~\ref{prop:opens} applied to the monomorphism \eqref{eq:monomorphism1} and isomorphism \eqref{eq:monomorphism2}.
  \end{proof}

\begin{remark}
	\label{rem:U^a = U mod pi^a} 
More colloquially, Corollary~\ref{cor: existence of dense open substack of Z with unique
      Kisin module} shows that for each tame type~$\tau$, there is an
    open dense substack~$\cU$ of~$\cZ^\tau$ consisting of Galois
    representations which have a unique Breuil--Kisin
    model of type~$\tau$. 
\end{remark}

  \begin{lemma}
    \label{lem: flat pullbacks are reduced scheme theoretically
      dense} 
      If $\cU$ is an open substack of $\cZ^{\tau}$ satisfying the condition
      of Proposition~{\em \ref{prop: existence of dense open substack of R such that C is a mono}},
      and if $T \to \cZ^{\tau}_{/\F}$ is a smooth morphism
      whose source is a scheme, 
      then $T\times_{\cZ^{\tau}_{/\F}} \cU_{/\F}$ is reduced,
      and is a dense open subscheme of $T$.
  \end{lemma}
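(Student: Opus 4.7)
The plan is to reduce everything to two standard properties of smooth morphisms: they are open (hence preserve density when passing to preimages), and they preserve reducedness (if the target is reduced, then the source of a smooth morphism is reduced).

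First I would address openness and denseness. Since $\cU \hookrightarrow \cZ^{\tau}$ is an open immersion, its base change along $T \to \cZ^{\tau}_{/\F}$ is an open immersion $T \times_{\cZ^{\tau}_{/\F}} \cU_{/\F} \hookrightarrow T$, so we indeed get an open subscheme of~$T$. To check that it is dense, I would first observe that $\cU_{/\F}$ is dense in $\cZ^{\tau}_{/\F}$:\ this follows from the fact, established in the proof of Proposition~\ref{prop: existence of dense open substack of R such that C is a mono}, that $\cU^1$ is dense in $\cZ^{\tau,1}$, combined with the identifications $|\cZ^{\tau}_{/\F}| = |\cZ^{\tau,1}|$ and $|\cU_{/\F}| = |\cU^1|$. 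The smooth morphism $T \to \cZ^{\tau}_{/\F}$ is in particular open, and the preimage of a dense open subset under an open morphism is dense:\ if a nonempty open $V \subseteq T$ were disjoint from the preimage of $|\cU_{/\F}|$, its open image in $\cZ^{\tau}_{/\F}$ would be disjoint from $|\cU_{/\F}|$, contradicting density.

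Next I would deal with reducedness. The projection $T \times_{\cZ^{\tau}_{/\F}} \cU_{/\F} \to \cU_{/\F}$ is the base change of the smooth morphism $T \to \cZ^{\tau}_{/\F}$, hence smooth. By Proposition~\ref{prop: existence of dense open substack of R such that C is a mono} the target $\cU_{/\F}$ is reduced, and since a smooth morphism is flat with geometrically regular (in particular geometrically reduced) fibres, the source of a smooth morphism whose target is reduced is itself reduced. This gives the reducedness of $T \times_{\cZ^{\tau}_{/\F}} \cU_{/\F}$.

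Neither step presents any real obstacle;\ the only subtlety is the bookkeeping between $\cU^1 \subset \cZ^{\tau,1}$ and $\cU_{/\F} \subset \cZ^{\tau}_{/\F}$, but the identification of their underlying topological spaces was already carried out in the proof of Proposition~\ref{prop: existence of dense open substack of R such that C is a mono} (via the isomorphism $\cU \times_{\cZ^{\tau}} \cZ^{\tau,1} \iso \cU_{/\F}$ coming from Corollary~\ref{cor: existence of dense open substack of Z with unique Kisin module} with $a=1$).
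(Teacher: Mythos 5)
Your proof is correct, and the reducedness half is exactly the paper's argument (smooth over the reduced stack $\cU_{/\F}$ forces the source to be reduced). Where you genuinely diverge is the density step. The paper argues scheme-theoretically: it notes that $\cU_{/\F}\to\cZ^{\tau}_{/\F}$ is a quasi-compact open immersion (using that $\cZ^{\tau}_{/\F}$ is Noetherian), that a dense open immersion is scheme-theoretically dominant after passing to underlying reduced substacks, and that scheme-theoretic dominance of a quasi-compact morphism is preserved by the flat base change $T\to\cZ^{\tau}_{/\F}$; density of $T\times_{\cZ^{\tau}_{/\F}}\cU_{/\F}$ in $T$ is then read off from this. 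You instead argue purely topologically: a smooth morphism of algebraic stacks is flat and locally of finite presentation, hence open on underlying topological spaces, and the preimage of a dense open subset under an open continuous map is dense (if a nonempty open $V\subseteq T$ missed the preimage, its open image would miss $|\cU_{/\F}|$). This is legitimate: $|T\times_{\cZ^{\tau}_{/\F}}\cU_{/\F}|$ really is the topological preimage of $|\cU_{/\F}|$ because $\cU_{/\F}\into\cZ^{\tau}_{/\F}$ is an open immersion, and the density needed both in the statement and in the application to generic reducedness is only topological. Your route is more elementary, trading the scheme-theoretic-image formalism (quasi-compactness plus flat base change) for openness of smooth maps; the paper's phrasing keeps the argument in the same scheme-theoretic-dominance language it uses elsewhere, but nothing in your version is weaker for the intended use. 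Your bookkeeping identifying $|\cU_{/\F}|$ with $|\cU^1|$ and $|\cZ^{\tau}_{/\F}|$ with $|\cZ^{\tau,1}|$, via the thickenings and the isomorphism $\cU^1\iso\cU_{/\F}$ from the proposition, is also fine.
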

  \begin{proof}
Since $\cZ^{\tau}_{/\F}$ is a 
    Noetherian algebraic stack (being of finite presentation over $\Spec \F$),
    the open
    immersion \[\cU_{/\F}\to \cZ^{\tau}_{/\F}\]
    is quasi-compact (\cite[\href{https://stacks.math.columbia.edu/tag/0CPM}{Tag 0CPM}]{stacks-project}), and has dense image by assumption. 
Again by assumption, it factors through $\cZ^{\tau}_{\red}$ ($= (\cZ^{\tau}_{/\F})_{\red}$),
and the resulting open 
    immersion \[\cU_{/\F}\to \cZ^{\tau}_{\red}\]
is again quasi-compact with dense image.  Since its target is reduced,
it is in fact scheme-theoretically dominant.

Now the given smooth morphism $T\to \cZ^{\tau}_{/\F}$
base-changes to a smooth morphism
$$ T\times_{\cZ^{\tau}_{/\F}} \cZ^{\tau}_{\red} \to
\cZ^{\tau}_{\red},$$ whose source is equal to the underlying reduced
scheme~$T_{\red}$ of~$T$. (Indeed the source is reduced, because
property of being reduced is local for the smooth topology,
\cite[\href{https://stacks.math.columbia.edu/tag/04YH}{Tag
  04YH}]{stacks-project}; and it is a closed subscheme of~$T$ with
underlying topological space equal to that of~$T$, by \cite[\href{https://stacks.math.columbia.edu/tag/04XH}{Tag 04XH}]{stacks-project}.)
Since smooth morphisms are in particular flat, 
%is flat (being smooth, by assumption),
the pullback
$$T\times_{\cZ^{\tau}_{/\F}} \cU_{/\F}  = T_{\red}\times_{\cZ^{\tau}_{\red}} \cU_{/\F} \to T_{\red}$$ is an open immersion
    with dense image;\ here we use the fact that for a quasi-compact
	    morphism, the property of being scheme-theoretically dominant
	    is preserved by flat base-change.
%together with the fact that an open immersion with dense image induces a scheme-theoretically
%	    dominant morphism after passing to underlying reduced substacks.
    Since the source of
this pullback is open in $T_{\red}$, it is itself reduced.
%the morphism $T\times_{\cZ^{\tau}_{/\F}} \cU_{/\F} \to T$ is smooth over the reduced algebraic
    %stack $\cU_{/\F}$, this source is itself reduced.
  \end{proof}

The following result is standard, but we recall the proof for the sake of completeness.

  \begin{lem}
    \label{lem: generic reducedness passes to completions}
Let $T$ be a Noetherian scheme, all of whose local rings at
    finite type points are $G$-rings. If $T$ is reduced {\em (}resp.\ generically reduced{\em)},
    then so are all of its complete local rings at finite type points.
  \end{lem}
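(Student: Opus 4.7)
The plan is to reduce the statement to a purely commutative-algebra claim about the completion map $A \to \widehat{A}$ of a local $G$-ring $A$, and then to exploit the defining property of a $G$-ring, namely that this completion map is a \emph{regular} morphism (flat with geometrically regular fibres). This reduction is automatic: a finite type point $x \in T$ gives a Noetherian local ring $A = \mathcal{O}_{T,x}$, which by hypothesis is a $G$-ring, and the corresponding complete local ring is $\widehat{A}$. Moreover $A$ inherits the property of being reduced (resp.\ generically reduced) from $T$.

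For the \textbf{reduced} case I would simply invoke the standard preservation result: if $R \to S$ is a regular morphism of Noetherian rings and $R$ is reduced, then $S$ is reduced (see for instance Matsumura, \emph{Commutative Ring Theory}, Thm.~32.2, or \cite[\href{https://stacks.math.columbia.edu/tag/07QK}{Tag 07QK}]{stacks-project}). Applied to $A \to \widehat{A}$, which is regular by the $G$-ring hypothesis, this gives that $\widehat{A}$ is reduced.

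For the \textbf{generically reduced} case, one needs to argue at each minimal prime of $\widehat{A}$ separately. Let $\mathfrak{q}$ be a minimal prime of $\widehat{A}$, and set $\mathfrak{p} = \mathfrak{q} \cap A$. Since $A \to \widehat{A}$ is flat, it satisfies going-down, and consequently $\mathfrak{p}$ is a minimal prime of $A$. By the generic reducedness hypothesis, $A_{\mathfrak{p}}$ is reduced; as it is also Artinian local, $A_{\mathfrak{p}}$ is a field. The localised map $A_{\mathfrak{p}} \to \widehat{A}_{\mathfrak{q}}$ is still regular (flatness is preserved by localisation, and the fibres of the localised map are localisations of the fibres of the original map, hence remain geometrically regular). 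Invoking again the preservation of reducedness under regular morphisms, $\widehat{A}_{\mathfrak{q}}$ is reduced. Since this holds at every minimal prime of $\widehat{A}$, this ring is generically reduced, as desired.

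The only mild subtlety — and what I would view as the ``main point'' — is the generically reduced case, specifically the need to verify that minimal primes of $\widehat{A}$ contract to minimal primes of $A$ (which is where flatness enters) and that regularity of $A \to \widehat{A}$ localises to regularity of $A_{\mathfrak{p}} \to \widehat{A}_{\mathfrak{q}}$. Both are standard, so there is no serious obstacle; the proof is a short application of well-known properties of regular ring maps.
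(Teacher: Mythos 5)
Your proposal is correct and follows essentially the same path as the paper's proof: localize at a (minimal) prime of $\widehat{A}$, use flatness and going-down to see that it contracts to a (minimal) prime $\mathfrak{p}$ of $A$, and conclude because reducedness ascends along regular morphisms. The only difference is how the regularity of $A_{\mathfrak{p}} \to \widehat{A}_{\mathfrak{q}}$ is justified: you localize the regular map $A \to \widehat{A}$ directly (its fibres being localizations of the geometrically regular fibres), whereas the paper argues via the composite with the further completion $(\widehat{A}_{\mathfrak{q}})^{\widehat{\,}}$, using that $\widehat{A}$ is again a $G$-ring and then cancelling along the faithfully flat map $\widehat{A}_{\mathfrak{q}} \to (\widehat{A}_{\mathfrak{q}})^{\widehat{\,}}$ — both routes are standard and valid.
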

  \begin{proof}
    Let $t$ be a finite type point of~$T$, and write
    $A:=\cO_{T,t}$. Then $A$ is a (generically) reduced local $G$-ring,
    and we need to show that its completion $\widehat{A}$ is also
    (generically) reduced. Let $\widehat{\p}$ be a (minimal) prime of
    $\widehat{A}$; since $A\to\widehat{A}$ is (faithfully) flat,
    $\widehat{\p}$ lies over a (minimal) prime $\p$ of $A$ by the
    going-down theorem.

Then $A_\p$ is reduced by assumption, and we need to show that
$\widehat{A}_{\widehat{\p}}$ is
reduced. By~\cite[\href{http://stacks.math.columbia.edu/tag/07QK}{Tag
  07QK}]{stacks-project}, it is enough to show that the morphism $A\to
\widehat{A}_{\widehat{\p}}$ is regular. Both $A$ and $\widehat{A}$ are
$G$-rings (the latter
by~\cite[\href{http://stacks.math.columbia.edu/tag/07PS}{Tag
  07PS}]{stacks-project}), so the composite \[A\to \widehat{A}\to
(\widehat{A}_{\widehat{\p}})^{\widehat{}}\] is a composite of regular
morphisms, and is thus a regular morphism
by~\cite[\href{http://stacks.math.columbia.edu/tag/07QI}{Tag
  07QI}]{stacks-project}. 

This composite factors through the natural morphism $A_{\p}\to
(\widehat{A}_{\widehat{\p}})^{\widehat{}}$, so this morphism is also
regular. Factoring it as the composite \[A_{\p}\to \widehat{A}_{\widehat{\p}}\to
(\widehat{A}_{\widehat{\p}})^{\widehat{}},\]it follows
from~\cite[\href{http://stacks.math.columbia.edu/tag/07NT}{Tag
  07NT}]{stacks-project} that $A_{\p}\to \widehat{A}_{\widehat{\p}}$ is
regular, as required.
  \end{proof}

Finally, we are ready to prove the main result of this section.

\begin{thm}
  \label{prop: generically reduced special fibre deformation ring}
For any tame type~$\tau$, the scheme $\Spec R^{\tau,\BT}_{\rbar}/\varpi$ is generically reduced, with
  underlying reduced subscheme $\Spec R^{\tau,1}_x$. 
\end{thm}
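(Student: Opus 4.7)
The identification of $\Spec R^{\tau,1}_x$ as the underlying reduced subscheme of $\Spec R^{\tau,\BT}_{\rbar}/\varpi$ is essentially indicated in the discussion preceding the theorem:\ combining Theorem~\ref{lem: C 1 and Z 1 are the underlying reduced substacks}(2) with Lemma~\ref{lem: generic reducedness passes to completions} (applied via a smooth chart) gives both reducedness of $R^{\tau,1}_x$ (since $\cZ^{\tau,1}$ is reduced) and nilpotence of the kernel of the surjection $R^{\tau,\BT}_{\rbar}/\varpi \twoheadrightarrow R^{\tau,1}_x$ (since $\cZ^{\tau,1}$ is the underlying reduced substack of $\cZ^{\tau}$). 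So the real content is the assertion of generic reducedness, which I now address.

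My strategy is to exploit Proposition~\ref{prop: existence of dense open substack of R such that C is a mono} through a smooth chart of $\cZ^{\tau}$. First I would choose a scheme $T$ and a smooth morphism $T \to \cZ^{\tau}_{/\F}$, together with a point $t \in T$ lifting $x$. By Lemma~\ref{lem: flat pullbacks are reduced scheme theoretically dense}, the preimage in $T$ of the dense open substack $\cU_{/\F}$ produced by the proposition is a dense reduced open subscheme of $T$, so $T$ is generically reduced. Since $T$ is of finite type over $\cO$ its local rings are $G$-rings, so Lemma~\ref{lem: generic reducedness passes to completions} then yields that the completed local ring $\widehat{\cO}_{T,t}$ is generically reduced.

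The final step is to transfer this generic reducedness from $\widehat{\cO}_{T,t}$ to $R^{\tau,\BT}_{\rbar}/\varpi$. Both rings are versal to $\cZ^{\tau}_{/\F}$ at~$x$, so by the general theory of versal rings they become isomorphic after adjoining appropriate formal variables:\ there should exist $m, n \ge 0$ and an isomorphism $\widehat{\cO}_{T,t}[[y_1,\ldots,y_m]] \cong (R^{\tau,\BT}_{\rbar}/\varpi)[[z_1,\ldots,z_n]]$. Generic reducedness is invariant under adjoining formal power series variables:\ the inclusion $A \hookrightarrow A[[y]]$ is faithfully flat with integral-domain fibers, so given a minimal prime $\mathfrak{p}$ of $A$, going-down yields a minimal prime $\mathfrak{q}$ of $A[[y]]$ over $\mathfrak{p}$, and the injection $A_\mathfrak{p} \hookrightarrow A[[y]]_\mathfrak{q}$ forces reducedness of the subring $A_\mathfrak{p}$ whenever $A[[y]]_\mathfrak{q}$ is reduced. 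Applied to both sides of the displayed isomorphism, this transfers generic reducedness from $\widehat{\cO}_{T,t}$ to $R^{\tau,\BT}_{\rbar}/\varpi$.

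The main obstacle I anticipate is producing the common formal power series extension of the two versal rings;\ while standard in formal deformation theory, it requires some care since $R^{\tau,\BT}_{\rbar}$ is a \emph{framed} versal ring, whereas $\widehat{\cO}_{T,t}$ comes from a smooth presentation of the formal neighborhood of $x$ in $\cZ^{\tau}$, so that neither is directly a power series extension of the other and one must pass through a common miniversal hull.
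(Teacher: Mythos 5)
Your first two steps (generic reducedness of a smooth chart via Proposition~\ref{prop: existence of dense open substack of R such that C is a mono} and Lemma~\ref{lem: flat pullbacks are reduced scheme theoretically dense}, then passage to the completed local ring via Lemma~\ref{lem: generic reducedness passes to completions}) are exactly the paper's ingredients. The gap is in the final transfer step, and it is the crux of the whole proof. You assert that since $\widehat{\cO}_{T,t}$ and $R^{\tau,\BT}_{\rbar}/\varpi$ are both versal to $\cZ^{\tau}_{/\F}$ at $x$, ``the general theory of versal rings'' gives an isomorphism $\widehat{\cO}_{T,t}[[y_1,\ldots,y_m]] \cong (R^{\tau,\BT}_{\rbar}/\varpi)[[z_1,\ldots,z_n]]$, to be obtained by passing through a common miniversal hull. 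But $\cZ^{\tau}_{/\F}$ is a stack, so the deformation problem at $x$ is groupoid-valued with nontrivial automorphisms: the hull is miniversal, not universal, and versality only lets you match \emph{pushforwards of objects}, not prescribe ring homomorphisms. Concretely, the naive lifting argument produces maps $R_0 \to R$ from the hull to a versal ring with the right pushforward of the miniversal object, but it does not show that $R$ is formally smooth (hence a power series ring) over $R_0$, because in the lifting problem one cannot force the composite ring maps to agree -- only the induced objects, up to isomorphism. So the comparison you need is not a formality; it is essentially equivalent to knowing that the abstractly given versal ring $R^{\tau,\BT}_{\rbar}/\varpi$ is realized as the completed local ring of a smooth chart. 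That is precisely how the paper proceeds: it applies the algebraization result \cite[\href{https://stacks.math.columbia.edu/tag/0DR0}{Tag 0DR0}]{stacks-project} to the versal morphism $\Spf R^{\tau,\BT}_{\rbar}/\varpi \to \cZ^{\tau}_{/\F}$ to produce a finite type scheme $V$, smooth over $\cZ^{\tau}_{/\F}$, and a point $v$ with $\widehat{\cO}_{V,v}\cong R^{\tau,\BT}_{\rbar}/\varpi$ compatibly; then Proposition~\ref{prop: existence of dense open substack of R such that C is a mono}, Lemma~\ref{lem: flat pullbacks are reduced scheme theoretically dense} and Lemma~\ref{lem: generic reducedness passes to completions} apply to $V$ directly, with no comparison of two different versal rings needed. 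Your ``main obstacle'' paragraph therefore defers exactly the content that Tag 0DR0 supplies; to repair your argument you would either cite that result (at which point the arbitrary chart $T$ is superfluous) or prove a uniqueness-of-versal-rings statement whose standard proof again goes through algebraization (realize both rings as completed local rings of smooth charts and compare via a point of the fibre product chart).

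A secondary, fixable point: you justify only the descent direction of ``generic reducedness is invariant under adjoining power series variables'' (from $A[[y]]$ down to $A$, via faithful flatness and going-down). Your argument also needs the ascent direction, from $\widehat{\cO}_{T,t}$ to $\widehat{\cO}_{T,t}[[y_1,\ldots,y_m]]$, and flatness with domain fibres is not enough for reducedness to ascend; one needs the fibres of $A \to A[[y]]$ to be geometrically reduced (e.g.\ that the map is regular, which holds here because complete local Noetherian rings are $G$-rings), i.e.\ an argument of the same kind as in Lemma~\ref{lem: generic reducedness passes to completions}.
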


  \begin{proof} By Proposition~\ref{cor: R tau BT is a versal ring to Z-hat},
  we have a versal morphism \[\Spf
  R_{\rbar}^{\tau,\BT}/\varpi\to \cZ^\tau_{/\F}\] 
at the point of $\cZ^\tau_{/\F}$ corresponding to $\rbar : G_K\to \GL_2(\F')$.
Since $\cZ^{\tau}_{/\F}$ is an algebraic stack of finite presentation
over $\F$ (as $\cZ^{\tau}$ is a
$\varpi$-adic formal algebraic stack of finite presentation over $\Spf \cO$), 
we may apply~\cite[\href{https://stacks.math.columbia.edu/tag/0DR0}{Tag 0DR0}]{stacks-project} to
this morphism so as to find a
smooth morphism $V\to \cZ^{\tau}_{/\F}$ with source a finite
type $\cO/\varpi$-scheme, and a point $v\in V$ with residue
field~$\F'$, such that there is an isomorphism
$\widehat{\cO}_{V,v}\cong R^{\tau,\BT}_{\rbar}/\varpi$,
compatible with the given morphism to~$\cZ^{\tau}_{/\F}$.
Proposition~\ref{prop: existence of dense open substack of R such that C is a mono}
and Lemma~\ref{lem: flat pullbacks are reduced scheme theoretically
      dense} taken together show that $V$ is generically reduced, and so the result follows from Lemma~\ref{lem: generic reducedness passes to completions}.
\end{proof}

\section{A case of the classical geometric Breuil--M\'ezard Conjecture}\label{sec:appendix on geom BM} In this
section, by combining the methods of~\cite{emertongeerefinedBM}
and~\cite{geekisin} we prove a special case of the classical geometric Breuil--M\'ezard
conjecture~\cite[Conj.\ 4.2.1]{emertongeerefinedBM}. This result is
``globalised'' in Section~\ref{sec: picture}.

Let
$\rbar:G_K\to\GL_2(\F)$ be a continuous representation, and let
$R^\square_{\rbar}$ be the universal framed deformation $\cO$-algebra for~$\rbar$. 
In this section we write $R^\square_{\rbar,0,\tau}$ for the quotient of $R^\square_{\rbar}$
that elswhere we have denoted 
$R^{\tau,\BT}_{\rbar}$. We use the more cumbersome
notation~$R^\square_{\rbar,0,\tau}$ here to make it easier for the
reader to refer to~\cite{emertongeerefinedBM} and~\cite{geekisin}. 

By \cite[Prop.\ 4.1.2]{emertongeerefinedBM}, $R^\square_{\rbar,0,\tau}/\varpi$ is zero
if $\rbar$ has no potentially Barsotti--Tate lifts of type~$\tau$, and otherwise
it is equidimensional of dimension $4+[K:\Qp]$. Each $\Spec
R^\square_{\rbar,0,\tau}/\varpi$ is a closed subscheme of $\Spec
R^\square_{\rbar}/\varpi$, and we write $Z(R^\square_{\rbar,0,\tau}/\varpi)$ for the
corresponding cycle, as in~\cite[Defn.\ 2.2.5]{emertongeerefinedBM}. This
is a formal sum of the irreducible components of $\Spec
R^\square_{\rbar,0,\tau}/\varpi$, weighted by the multiplicities with which they occur.

\begin{lem}
  \label{lem:write Serre weight as linear combination of types}If $\sigma$ is a
non-Steinberg Serre weight of $\GL_2(k)$, then there are
integers $n_\tau(\sigma)$ such that
$\sigma=\sum_\tau n_\tau(\sigma)\sigmabar(\tau)$ in the Grothendieck group
of mod $p$ representations of $\GL_2(k)$, where the $\tau$ run over the tame types.
\end{lem}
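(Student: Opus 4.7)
The plan is to identify each $\sigmabar(\tau)$ with the mod-$p$ reduction of the irreducible characteristic-zero representation $\sigma(\tau)$ of $\GL_2(k)$ associated to $\tau$ via tame inertial local Langlands, and then to invoke the surjectivity of the decomposition map for the finite group $\GL_2(k)$.

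First I would recall that the (tame) inertial local Langlands correspondence sets up a bijection between tame inertial types $\tau$ and the irreducible characteristic-zero representations of $\GL_2(k)$ of three kinds: (i) characters $\chi \circ \det$, arising from scalar tame principal series types; (ii) irreducible principal series $\pi(\eta,\eta')$ with $\eta \neq \eta'$, arising from non-scalar tame principal series types; and (iii) cuspidal representations $\Theta_\eta$, arising from tame cuspidal types. The only irreducible characteristic-zero representations of $\GL_2(k)$ \emph{not} of this form are the Steinberg twists $\chi \otimes \St$.

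Next I would exploit a block decomposition of the Grothendieck group $R_\F(\GL_2(k))$ of mod-$p$ representations. On one hand, each Steinberg twist $\chi \otimes \St$ is already irreducible modulo $p$ and reduces to the Steinberg Serre weight $\sigmabar_{\chi,(p-1,\ldots,p-1)}$. On the other hand, as recalled in the paper text immediately preceding the lemma, the Jordan--H\"older factors of $\sigmabar(\tau)$ for any tame type $\tau$ are all non-Steinberg. Consequently $R_\F(\GL_2(k))$ splits as the direct sum of the $\Z$-sublattice generated by the Steinberg Serre weights (spanned by the reductions of the Steinberg twists) and the $\Z$-sublattice generated by the non-Steinberg Serre weights (containing all the classes $[\sigmabar(\tau)]$).

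To conclude, I would appeal to the classical surjectivity of the decomposition map $d \colon R_\cO(\GL_2(k)) \to R_\F(\GL_2(k))$, which holds for any finite group (see for example Serre, \emph{Linear Representations of Finite Groups}, Ch.~16). Combined with the block decomposition above, this surjectivity immediately yields that the classes $[\sigmabar(\tau)]$ already $\Z$-span the sublattice of non-Steinberg Serre weights, giving integers $n_\tau(\sigma)$ with $\sigma = \sum_\tau n_\tau(\sigma) \sigmabar(\tau)$ as required. The main obstacle to formalising this argument is the correct invocation of the surjectivity of $d$ together with the verification of the block structure in sufficient detail; both are essentially classical facts about the representation theory of $\GL_2$ over a finite field, and neither requires any input specific to the present context beyond what has already been recorded in the paper.
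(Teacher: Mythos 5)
Your proof is correct and is essentially the paper's own argument: the paper likewise deduces the lemma from the surjectivity of the decomposition map for $\GL_2(k)$ (citing Serre) together with the observation that the char-$0$ irreducibles not of the form $\sigma(\tau)$ are exactly the Steinberg twists, whose reductions are precisely the Steinberg Serre weights. You merely spell out the implicit bookkeeping (that the $\JH$ factors of $\sigmabar(\tau)$ are non-Steinberg, so the non-Steinberg sublattice is spanned by the classes $[\sigmabar(\tau)]$), which the paper leaves to the reader; note only that this fact is recalled in the introduction and the notation section rather than immediately before the lemma.
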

\begin{proof}This is an immediate consequence of the surjectivity of the natural
  map from the Grothendieck group of $\Qpbar$-representations of $\GL_2(k)$ to
  the Grothendieck group of $\Fpbar$-representations of $\GL_2(k)$ \cite[\S III,
  Thm.\ 33]{MR0450380}, together with the observation that the reduction of the
  Steinberg representation of $\GL_2(k)$ is precisely $\sigmabar_{\vec{0},\vec{p-1}}$.
  \end{proof}
Let $\sigma$ be a non-Steinberg Serre weight of $\GL_2(k)$, so that by
Lemma~\ref{lem:write Serre weight as linear combination of types} we can
write \numequation\label{eqn: defn of n_tau(sigmabar)}\sigma=\sum_\tau n_\tau(\sigma)\sigmabar(\tau)\end{equation}  in the Grothendieck group
of mod $p$ representations of $\GL_2(k)$. Note that the integers $n_\tau(\sigma)$ are not uniquely determined; however,
all our constructions elsewhere in this paper will be (non-obviously!) independent of the choice of the $n_\tau(\sigma)$. We also
write \[\sigmabar(\tau)=\sum_{\sigma}m_{\sigma}(\tau)\sigma;\] since
$\sigmabar(\tau)$ is multiplicity-free, each $m_{\sigma}(\tau)$ is equal to
$0$ or $1$. Then\[\sigma=\sum_{\sigma'}\left(\sum_\tau
  n_\tau(\sigma)m_{\sigma'}(\tau)\right)\sigma',\]and
therefore\numequation\label{eqn: orthogonality of sigmabar tau}\sum_\tau
  n_\tau(\sigma)m_{\sigma'}(\tau)=\delta_{\sigma,\sigma'}.\end{equation}

For each non-Steinberg Serre weight $\sigma$, we
set \[\cC_{\sigma}:=\sum_\tau
n_\tau(\sigma)Z(R^\square_{\rbar,0,\tau}/\varpi),\]where the sum ranges over the
tame types $\tau$, and the integers $n_\tau(\sigma)$ are as in~(\ref{eqn:
  defn of n_tau(sigmabar)}). By definition this is a formal sum with (possibly
negative) multiplicities of irreducible
subschemes of~$\Spec
R^\square_{\rbar}/\varpi$; recall that we say that it is \emph{effective} if all of
the multiplicities are non-negative.
\begin{thm}
  \label{thm: geometric BM}Let $\sigma$ be a non-Steinberg Serre weight. Then
  the cycle $\cC_{\sigma}$ is effective, 
and is nonzero
  precisely when $\sigma\in W(\rbar)$. 
  It is
  independent of the choice of integers $n_\tau(\sigma)$ satisfying~{\em
    (\ref{eqn: defn of n_tau(sigmabar)})}. For
  each tame type $\tau$, we
  have \[Z(R^\square_{\rbar,0,\tau}/\varpi)=\sum_{\sigma\in\JH(\sigmabar(\tau))}\cC_{\sigma}.\] 
\end{thm}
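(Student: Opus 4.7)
The plan is to combine the methods of \cite{emertongeerefinedBM} and \cite{geekisin} (as advertised in the section's opening) to first produce, for each non-Steinberg Serre weight $\sigma$, an effective cycle $\cC'_\sigma$ on $\Spec R^\square_{\rbar}/\varpi$ with the properties that (a) $\cC'_\sigma\neq 0$ if and only if $\sigma\in W(\rbar)$, and (b) for every tame type $\tau$,
\[ Z(R^\square_{\rbar,0,\tau}/\varpi) \;=\; \sum_{\sigma}\,m_\sigma(\tau)\,\cC'_\sigma, \]
where the sum effectively ranges over non-Steinberg Serre weights, since Steinberg weights never occur as Jordan--H\"older factors of $\sigmabar(\tau)$ for tame $\tau$.

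Granted this input, every assertion in the theorem then follows from a short calculation using the orthogonality relation~(\ref{eqn: orthogonality of sigmabar tau}). Substituting (b) into the definition of $\cC_\sigma$ gives
\[ \cC_\sigma \;=\; \sum_\tau n_\tau(\sigma)\sum_{\sigma'} m_{\sigma'}(\tau)\,\cC'_{\sigma'} \;=\; \sum_{\sigma'}\Bigl(\sum_\tau n_\tau(\sigma) m_{\sigma'}(\tau)\Bigr)\cC'_{\sigma'} \;=\; \cC'_\sigma, \]
so that effectivity and the criterion ``$\cC_\sigma\ne 0 \iff \sigma\in W(\rbar)$'' are inherited directly from the corresponding properties of $\cC'_\sigma$. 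The independence of $\cC_\sigma$ from the choice of the integers $n_\tau(\sigma)$ falls out of the same identification, since $\cC'_\sigma$ is defined without reference to them. Finally, using that $\sigmabar(\tau)$ is multiplicity-free (so $m_\sigma(\tau)\in\{0,1\}$), one reads off
\[ \sum_{\sigma\in\JH(\sigmabar(\tau))}\cC_\sigma \;=\; \sum_\sigma m_\sigma(\tau)\,\cC'_\sigma \;=\; Z(R^\square_{\rbar,0,\tau}/\varpi), \]
which is the claimed Breuil--M\'ezard identity.

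The real work lies in justifying the input rather than in this formal manipulation. The theorem of \cite{geekisin} is \emph{a priori} a numerical Breuil--M\'ezard statement about Hilbert--Samuel multiplicities, proved via the Taylor--Wiles patching method, and one must promote these multiplicities to actual effective algebraic cycles on $\Spec R^\square_{\rbar}/\varpi$, rather than on an auxiliary patched ring. This is precisely the role of the framework of \cite{emertongeerefinedBM}:\ the patched module gives rise to cycles on the patched ring, and one verifies, using the flatness of the map from the patched ring to a power series ring over $R^\square_{\rbar}/\varpi$, that these cycles descend to well-defined effective cycles on $\Spec R^\square_{\rbar}/\varpi$ for which identity~(b) holds for all tame $\tau$ and the support property~(a) can be read off from the patching setup. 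The delicate bookkeeping here -- in particular, ensuring that enough tame types are available for the orthogonality relation to pin down $\cC'_\sigma$ uniquely -- is what I would expect to be the main technical obstacle.
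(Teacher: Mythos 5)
There is a genuine gap. Your ``input'' --- the existence of effective cycles $\cC'_\sigma$ on $\Spec R^\square_{\rbar}/\varpi$ satisfying $Z(R^\square_{\rbar,0,\tau}/\varpi)=\sum_\sigma m_\sigma(\tau)\cC'_\sigma$ for every tame $\tau$, nonzero exactly when $\sigma\in W(\rbar)$ --- is not something you can take off the shelf from \cite{geekisin} and \cite{emertongeerefinedBM}: it \emph{is} the geometric Breuil--M\'ezard statement the theorem is proving (this section exists precisely to establish that special case of \cite[Conj.\ 4.2.1]{emertongeerefinedBM}). Your subsequent orthogonality manipulation showing $\cC_\sigma=\cC'_\sigma$ is fine, but it only relocates the entire content into the unproved input, and the sketch you give for that input does not work as described. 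What \cite{emertongeerefinedBM} actually supplies (their Lem.\ 5.5.1(5), available here only after a globalisation step you omit: one needs a global $\rhobar$ satisfying \cite[Conj.\ A.3]{emertongeerefinedBM}, which uses the existence of potentially Barsotti--Tate lifts and potential diagonalizability from \cite{geekisin}) is a \emph{global, patched} identity: $Z(\overline{R}_\infty/\varpi)$ is a sum of cycles $Z'_{\otimes_{v|p}\sigma_v}(\rhobar)$ living on the patched ring, and $\overline{R}_\infty/\varpi$ involves a completed tensor product of local deformation rings over \emph{all} places $v\mid p$, times formal variables. There is no ``flatness descent'' of the $Z'$ cycles to $\Spec R^\square_{\rbar}/\varpi$: a cycle on a power series ring over a product of local rings need not be a product of local cycles times the formal-variable cycle, and establishing such a decomposition is exactly the point at issue.

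The actual route is the reverse of yours: one defines the candidate cycles directly as $\cC_\sigma=\sum_\tau n_\tau(\sigma)Z(R^\square_{\rbar,0,\tau}/\varpi)$ (already cycles on $\Spec R^\square_{\rbar}/\varpi$), substitutes into the patched identity, and uses the orthogonality relation~(\ref{eqn: orthogonality of sigmabar tau}) to deduce $\prod_{v|p}\cC_{\sigma_v}\times Z(\F[[x_1,\dots]])=Z'_{\otimes_{v|p}\sigma_v}(\rhobar)$. Effectivity of $Z'$ then only gives effectivity of the \emph{products} $\prod_{v|p}\cC_{\sigma_v}$, hence a genuine sign ambiguity (all $\cC_\sigma$ effective, or all $-\cC_\sigma$ effective), which your proposal never confronts; it is resolved by a separate substitution argument showing the negative alternative forces $\cC_\sigma=-\cC_\sigma=0$ for all $\sigma$. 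Independence of the $n_\tau(\sigma)$ and the support statement then come from the identification with $Z'$, the latter via $\WBT(\rbar)=W(\rbar)$ from \cite{gls13}. So the missing ideas are concrete: the globalisation/patching setup, the handling of the all-places product structure, and the sign/effectivity argument; ``descend the patched cycles by flatness'' does not substitute for them.
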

\begin{proof}
  We will argue exactly as in the proof of~\cite[Thm.\
  5.5.2]{emertongeerefinedBM} (taking $n=2$), and we freely use the
  notation and definitions of~\cite{emertongeerefinedBM}. Since
  $p>2$, 
  we have $p\nmid n$ and thus a suitable globalisation $\rhobar$ exists
  provided that~\cite[Conj.\ A.3]{emertongeerefinedBM} holds for
  $\rbar$. The latter follows from the proof of
  Theorem A.2 of~\cite{geekisin} (which shows that $\rbar$ has a
  potentially Barsotti--Tate lift) and Lemma 4.4.1 of \emph{op.cit}.\ 
  (which shows that any potentially Barsotti--Tate representation is
  potentially diagonalizable). These same results also show that the equivalent
  conditions of~\cite[Lem.\ 5.5.1]{emertongeerefinedBM} hold in the case that
  $\lambda_v=0$ for all $v$, and in particular in the case that $\lambda_v=0$
  and $\tau_v$ is tame for all $v$, which is all that we will require.

By~\cite[Lem.\ 5.5.1(5)]{emertongeerefinedBM}, we see that for each choice of
tame types $\tau_v$, we have 
\numequation\label{eqn: first BM eqn}
Z(\overline{R}_\infty/\varpi)=\sum_{\otimes_{v|p}\sigma_v}\prod_{v|p}m_{\sigma_v}(\tau_v)Z'_{\otimes_{v|p}\sigma_v}(\rhobar).\end{equation}
Now, by definition we
have  \numequation\label{eqn: second BM eqn}Z(\Rbarinfty/\varpi)=\prod_{v|p} Z(R^{\square}_{\rbar,0,\tau_v}/\varpi)\times Z
    (\F[[[x_1,\dots,x_{q-[F^+:\Q]n(n-1)/2},t_1,\dots,t_{n^2} ]]).\end{equation} Fix a
    non-Steinberg Serre weight $\sigma=\otimes_v\sigma_v$, and sum over
    all choices of types $\tau_v$, weighted by
    $\prod_{v|p}n_{\tau_v}(\sigma_v)$. We obtain \[\sum_\tau\prod_{v|p}n_{\tau_v}(\sigma_v)\prod_{v|p} Z(R^{\square}_{\rbar,0,\tau_v}/\varpi)\times Z
    (\F[[[x_1,\dots,x_{q-[F^+:\Q]n(n-1)/2},t_1,\dots,t_{n^2}
    ]])\]\[=\sum_\tau\prod_{v|p}n_{\tau_v}(\sigma_v)\sum_{\otimes_{v|p}\sigma'_v}\prod_{v|p}m_{\sigma'_v}(\tau_v)Z'_{\otimes_{v|p}\sigma'_v}(\rhobar) \]which
    by~(\ref{eqn: orthogonality of sigmabar tau}) simplifies to \numequation\label{eqn: third BM eqn}\prod_{v|p}\cC_{\sigma_v}\times Z
    (\F[[[x_1,\dots,x_{q-[F^+:\Q]n(n-1)/2},t_1,\dots,t_{n^2}
    ]])=Z'_{\otimes_{v|p}\sigma_v}(\rhobar).\end{equation} Since
    $Z'_{\otimes_{v|p}\sigma_v}(\rhobar)$ is effective by definition (as it
    is defined as a positive multiple of the support cycle of a patched module),
    this shows that every $\prod_{v|p}\cC_{\sigma_v}$ is effective. We conclude
    that either every $\cC_{\sigma}$ is effective, or that every
    $-\cC_{\sigma}$ is effective.

Substituting~(\ref{eqn: third BM eqn}) and~(\ref{eqn: second BM eqn}) into~(\ref{eqn:
  first BM eqn}), we see that \numequation\label{eqn: 4th BM eqn}\prod_{v|p} Z(R^{\square}_{\rbar,0,\tau_v}/\varpi)\times Z
    (\F[[[x_1,\dots,x_{q-[F^+:\Q]n(n-1)/2},t_1,\dots,t_{n^2} ]])\end{equation} \[=\prod_{v|p}\left(\sum_{\sigma\in\JH(\sigmabar(\tau))}\cC_{\sigma_v}\right)\times Z
    (\F[[[x_1,\dots,x_{q-[F^+:\Q]n(n-1)/2},t_1,\dots,t_{n^2}
    ]],\]and we deduce that either
    $Z(R^\square_{\rbar,0,\tau}/\varpi)=\sum_{\sigma}m_{\sigma}(\tau)\cC_{\sigma}$
    for all $\tau$, or
    $Z(R^\square_{\rbar,0,\tau}/\varpi)=-\sum_{\sigma}m_{\sigma}(\tau)\cC_{\sigma}$
    for all $\tau$.

Since each $Z(R^\square_{\rbar,0,\tau}/\varpi)$ is effective, the second
possibility holds if and only if every $-\cC_{\sigma}$ is effective (since
either all the $-\cC_{\sigma}$ are effective, or all the $\cC_{\sigma}$
are effective). It remains to show that this possibility leads to a
contradiction. Now, if
$Z(R^\square_{\rbar,0,\tau}/\varpi)=-\sum_{\sigma}m_{\sigma}(\tau)\cC_{\sigma}$
for all $\tau$, then substituting into the definition $\cC_{\sigma}=\sum_\tau
n_\tau(\sigma)Z(R^\square_{\rbar,0,\tau}/\varpi)$, we
obtain \[\cC_{\sigma}=\sum_{\sigma'}\left(\sum_{\tau}n_\tau(\sigma)m_{\sigma'}(\tau)\right)\left(-\cC_{\sigma'}\right),\]and
applying~(\ref{eqn: orthogonality of sigmabar tau}), we obtain
$\cC_{\sigma}=-\cC_{\sigma}$, so that $\cC_{\sigma}=0$ for all
$\sigma$.  Thus all the $\cC_{\sigma}$ are effective, as claimed.

Since $Z'_{\otimes_{v|p}\sigma_v}(\rhobar)$ by definition
depends only on (the global choices in the Taylor--Wiles method, and)
$\otimes_{v|p}\sigma_v$, and \emph{not} on the particular choice of the
$n_\tau(\sigma)$, it follows from~(\ref{eqn: third BM eqn})
that~$\cC_{\sigma}$ is also independent of this choice.

Finally, note that by definition
  $Z'_{\otimes_{v|p}\sigma_v}(\rhobar)$  is nonzero precisely when
  $\sigma_v$ is in the set $\WBT(\rbar)$ defined in~\cite[\S3]{geekisin}; but
  by the main result of~\cite{gls13}, this is precisely the set~$W(\rbar)$.
\end{proof}

\begin{rem}
  \label{rem: could have done BM for wildly ramified
    types}As we do not use wildly ramified types elsewhere in the
  paper, we have restricted the statement of Theorem~\ref{thm:
    geometric BM} to the case of tame types; but the statement
admits a natural extension
to the case
  of wildly ramified inertial types (with some components now occurring with multiplicity greater than
  one), and the proof goes through unchanged in this more general setting.
\end{rem}

\section{The geometric Breuil--M\'ezard conjecture for the stacks \texorpdfstring{$\cZ^{\dd,1}$}{Z\textasciicircum dd,1}}\label{sec: picture} We now prove our main results on the irreducible components of
$\cZ^{\dd,1}$. We do this by a slightly indirect method, defining
certain formal sums of these irreducible components which we then
compute via the geometric Breuil--M\'ezard conjecture, and in
particular the results of Section~\ref{sec:appendix on geom
  BM}. 

By Theorem~\ref{lem: C 1 and Z 1 are the underlying reduced substacks}, $\cZ^{\dd,1}$
is reduced and equidimensional, and each $\cZ^{\tau,1}$ is a union of some of
its irreducible components. Let $K(\cZ^{\dd,1})$ be the free abelian group
generated by the irreducible components of $\cZ^{\dd,1}$.   We say that an element of $K(\cZ^{\dd,1})$ is
\emph{effective} if the multiplicity of each irreducible
component is nonnegative.  We say that an element of $K(\cZ^{\dd,1})$ is
\emph{reduced and effective} if the multiplicity of each irreducible
component is $0$ or~$1$;\ we will sometimes abuse language by identifying a reduced and effective cycle with the reduced union of the irreducible components appearing in it.

Let $x$ be a finite type point of $\cZ^{\dd,1}$, corresponding to a
representation $\rbar:G_K\to\GL_2(\F')$, and recall that $R^{\dd,1}_x$ is a versal ring to $\cZ^{\dd,1}$ having each $R^{\tau,1}_x$ as a quotient. 
Since $\cZ^{\tau,1}$ is a union of irreducible
components of  $\cZ^{\dd,1}$, $\Spec R^{\tau,1}_x$ is a union of
irreducible components of~$\Spec R^{\dd,1}_x$. 

Let $K(R^{\dd,1}_x)$ be the free abelian group generated by the irreducible
components of $\Spec R^{\dd,1}_x$. By~\cite[\href{https://stacks.math.columbia.edu/tag/0DRB}{Tag 0DRB},\href{https://stacks.math.columbia.edu/tag/0DRD}{Tag 0DRD}]{stacks-project}, there is a
natural multiplicity-preserving surjection from the set of irreducible
components of $\Spec R^{\dd,1}_x$ to the set of irreducible components
of~$\cZ^{\dd,1}$ which contain~$x$. Using this surjection, we can
define a group homomorphism \[K(\cZ^{\dd,1})\to K(R^{\dd,1}_x)\]
in the following way: we send any irreducible component $\cZ$ of
$\cZ^{\dd,1}$ which contains~$x$ to the formal sum of the
irreducible components of~$\Spec R^{\dd,1}_x$ in the preimage of~$\cZ$ under
this surjection, and we send every other irreducible component to~$0$.

\begin{lem}
  \label{lem: can read off reduced and effective from local rings}An
  element~$\cTbar$ of~$K(\cZ^{\dd,1})$ is effective if
  and only if for every finite type point $x$ of~$\cZ^{\dd,1}$, the
  image of $\cTbar$ in $K(R^{\dd,1}_x)$ is effective. We have $\cTbar=0$ if
  and only if its image is $0$ in every $K(R^{\dd,1}_x)$.
\end{lem}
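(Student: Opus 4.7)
The plan is to exploit the fact that each irreducible component of $\cZ^{\dd,1}$ contains a ``generic'' finite type point that lies on no other irreducible component. At such a point the image of $\cTbar$ in $K(R^{\dd,1}_x)$ will isolate the coefficient of the chosen component, and so both statements can be read off.

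First I would dispose of the easy ``only if'' directions. Given any finite type point $x$, the homomorphism $K(\cZ^{\dd,1})\to K(R^{\dd,1}_x)$ sends each irreducible component either to $0$ or to a nonnegative (indeed, $\{0,1\}$-coefficient) sum of irreducible components of $\Spec R^{\dd,1}_x$. Hence it sends effective elements to effective elements, and $0$ to $0$, giving both ``only if'' implications at once.

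For the converses, the key geometric input is:\ for every irreducible component $\cZ_0$ of $\cZ^{\dd,1}$, there exists a finite type point $x\in\cZ_0$ that lies on no other irreducible component of $\cZ^{\dd,1}$. Since $\cZ^{\dd,1}$ is an algebraic stack of finite type over $\F$ (by Theorem~\ref{lem: C 1 and Z 1 are the underlying reduced substacks}), it has only finitely many irreducible components; hence the complement in $\cZ_0$ of the union of the remaining components is a nonempty open substack of $\cZ_0$, which, being of finite type over $\F$, contains a finite type point. At such an $x$, the homomorphism annihilates every component of $\cZ^{\dd,1}$ other than $\cZ_0$, and sends $\cZ_0$ to the effective cycle $\eta_x := \sum_i W_i$, where the $W_i$ are the irreducible components of $\Spec R^{\dd,1}_x$ lying over $\cZ_0$ under the surjection of \cite[\href{https://stacks.math.columbia.edu/tag/0DRB}{Tag 0DRB}]{stacks-project}. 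Because $x\in\cZ_0$, this preimage is nonempty, so $\eta_x\neq 0$ in $K(R^{\dd,1}_x)$ and is effective with $\{0,1\}$-coefficients; in particular the irreducible components $W_i$ appearing in $\eta_x$ are distinct from those appearing in the images of the other components of $\cZ^{\dd,1}$ through $x$ (which here are none).

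Write $\cTbar=\sum_{\cZ} n_{\cZ}\cdot\cZ$. Applied at the point $x$ just produced, the image of $\cTbar$ equals $n_{\cZ_0}\cdot\eta_x$. If this image is effective, then $n_{\cZ_0}\geq 0$; if it is zero, then $n_{\cZ_0}=0$. Running this over all components $\cZ_0$ gives both ``if'' directions and completes the proof. The main (minor) obstacle is simply to verify the existence of the generic finite type points, but this is a standard consequence of the finite type of $\cZ^{\dd,1}$ over $\F$ together with the finiteness of the set of its irreducible components.
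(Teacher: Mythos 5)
Your proposal is correct and follows essentially the same route as the paper's proof: both handle the trivial ``only if'' direction first, then for each component choose a finite type point lying on that component and no other, so that the image of $\cTbar$ in $K(R^{\dd,1}_x)$ is the coefficient of that component times a nonempty effective sum, from which nonnegativity (resp.\ vanishing) of the coefficient follows. The only difference is that you spell out the (standard) existence of such a generic finite type point, which the paper takes for granted.
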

\begin{proof}
  The ``only if'' direction is trivial, so we need only consider the
  ``if'' implication. Write $\cTbar=\sum_{\cZbar}a_{\cZbar}\cZbar$,
  where the sum runs over the irreducible components $\cZbar$ of
  $\cZ^{\dd,1}$, and the $a_{\cZbar}$ are integers.

  Suppose first
  that the image of $\cTbar$ in $K(R^{\dd,1}_x)$ is effective for every $x$; we then have to
  show that each $a_{\cZbar}$ is nonnegative.
  To see this, fix an irreducible component~$\cZbar$, and choose~$x$
  to be a finite type point of $\cZ^{\dd,1}$ which is contained
  in~$\cZbar$ and in no other irreducible component
  of~$\cZ^{\dd,1}$. Then the image of~$\cTbar$ in $K(R^{\dd,1}_x)$ is equal to
  $a_{\cZbar}$ times a nonempty sum of irreducible components
  of~$\Spec R^{\dd,1}_x$. By hypothesis, this must be effective,
  which implies that $a_{\cZbar}$ is nonnegative, as required.

Finally, if the image of $\cTbar$ in $K(R^{\dd,1}_x)$ is $0$, then
$a_{\cZbar}=0$; so if this holds for all~$x$, then~$\cTbar=0$.
\end{proof}

For each tame type $\tau$, we let $\cZ(\tau)$ denote the formal sum of
the irreducible components of $\cZ^{\tau,1}$, considered as an
element of $K(\cZ^{\dd,1})$. By Lemma~\ref{lem:write Serre weight as
  linear combination of types}, for each non-Steinberg Serre weight
$\sigma$ of $\GL_2(k)$ there are integers $n_\tau(\sigma)$ such
that $\sigma=\sum_\tau n_\tau(\sigma)\sigmabar(\tau)$ in the
Grothendieck group of mod $p$ representations of $\GL_2(k)$, where the
$\tau$ run over the tame types. We set
\[\cZ(\sigma):=\sum_\tau n_\tau(\sigma)\cZ(\tau)\in K(\cZ^{\dd,1}).\]
The integers $n_\tau(\sigma)$ are not necessarily unique, but it
follows from the following result that $\cZ(\sigma)$  is independent
of the choice of $n_\tau(\sigma)$, and is reduced and effective.

\begin{thm}
  \label{thm:stack version of geometric Breuil--Mezard}\leavevmode
  \begin{enumerate}
  \item   Each $\cZ(\sigma)$  is an
    irreducible component of $\cZ^{\dd,1}$.
  \item The finite type points of $\cZ(\sigma)$ are precisely the
    representations $\rbar:G_K\to\GL_2(\F')$ having $\sigma$ as a
    Serre weight.
 \item For each tame type $\tau$, we have
   $\cZ(\tau)=\sum_{\sigma\in\JH(\sigmabar(\tau))}\cZ(\sigma)$. 
  \item Every irreducible component of $\cZ^{\dd,1}$ is of the form
    $\cZ(\sigma)$ for some unique non-Steinberg Serre weight~$\sigma$. 
    \item For each tame type $\tau$, and each $J\in \cP_\tau$,
	    we have $\cZ(\sigmabar(\tau)_J)=\overline{\cZ}(J)$.
  \end{enumerate}

\end{thm}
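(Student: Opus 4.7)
The plan is to work locally at each finite type point $x$ of $\cZ^{\dd,1}$, corresponding to a representation $\rbar\col G_K \to \GL_2(\F')$, by transferring every cycle in sight to an element of $K(R^{\dd,1}_x)$ via the multiplicity-preserving map of Lemma~\ref{lem: can read off reduced and effective from local rings}, applying the classical geometric Breuil--M\'ezard result of Section~\ref{sec:appendix on geom BM}, and then gluing using the same lemma.

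\textbf{Local matching with the classical cycle.} The first task is to show that, for every tame type $\tau$, the image of $\cZ(\tau)$ in $K(R^{\dd,1}_x)$ coincides with the classical cycle $Z(R^\square_{\rbar,0,\tau}/\varpi)$ of Section~\ref{sec:appendix on geom BM}. The key ingredient is Theorem~\ref{prop: generically reduced special fibre deformation ring}: generic reducedness of $\Spec R^{\tau,\BT}_{\rbar}/\varpi$ implies that $Z(R^\square_{\rbar,0,\tau}/\varpi)$ is exactly the reduced sum of the irreducible components of $\Spec R^{\tau,1}_x$, which, via Proposition~\ref{cor: R tau BT is a versal ring to Z-hat} and the surjection from components of $R^{\dd,1}_x$ to components of $\cZ^{\dd,1}$ through $x$, is also the image of $\cZ(\tau)$. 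Taking $\ZZ$-linear combinations, the image of $\cZ(\sigma) = \sum_\tau n_\tau(\sigma)\cZ(\tau)$ in $K(R^{\dd,1}_x)$ matches the cycle $\cC_\sigma$ of Theorem~\ref{thm: geometric BM}.

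\textbf{Conclusion via geometric Breuil--M\'ezard.} Theorem~\ref{thm: geometric BM} supplies: each $\cC_\sigma$ is effective; $\cC_\sigma$ is nonzero at $x$ if and only if $\sigma \in W(\rbar)$; and $Z(R^\square_{\rbar,0,\tau}/\varpi) = \sum_{\sigma \in \JH(\sigmabar(\tau))} \cC_\sigma$. Pulling these back through Lemma~\ref{lem: can read off reduced and effective from local rings}, I deduce that $\cZ(\sigma)$ is effective in $K(\cZ^{\dd,1})$ with support meeting $x$ iff $\sigma \in W(\rbar)$, which is (2), and that $\cZ(\tau) = \sum_{\sigma \in \JH(\sigmabar(\tau))}\cZ(\sigma)$, which is (3). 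For (1), (4), (5), I compare the two effective decompositions $\cZ(\tau) = \sum_{J \in \cP_\tau} \overline{\cZ}(J)$ (from Theorem~\ref{thm:main thm cegsC}(2)) and $\cZ(\tau) = \sum_{\sigma \in \JH(\sigmabar(\tau))} \cZ(\sigma)$ just established: Remark~\ref{rem:maximalfamilies} produces a dense set of finite type points of $\overline{\cZ}(J)$ whose associated $\rbar$ has $\sigmabar(\tau)_J$ as its (unique) Serre weight, so by (2) one has $\overline{\cZ}(J) \subset \cZ(\sigmabar(\tau)_J)$; term-matching in the two decompositions then forces $\overline{\cZ}(J) = \cZ(\sigmabar(\tau)_J)$, giving (5) and showing that each $\cZ(\sigma)$ is a single irreducible component, giving (1). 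Finally, every irreducible component of $\cZ^{\dd,1}$ lies in some $\cZ^{\tau,1}$ (its generic points admit a tamely potentially Barsotti--Tate lift by Theorem~\ref{lem: C 1 and Z 1 are the underlying reduced substacks}(3)) and is therefore of the form $\overline{\cZ}(J) = \cZ(\sigmabar(\tau)_J)$; this gives (4), with uniqueness of the labelling Serre weight coming from (2).

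\textbf{Main obstacle.} The crux is the local matching step, where Theorem~\ref{prop: generically reduced special fibre deformation ring} is indispensable: without generic reducedness, $Z(R^\square_{\rbar,0,\tau}/\varpi)$ could carry multiplicities greater than one and the correspondence with the reduced effective cycle $\cZ(\tau)$ on $\cZ^{\dd,1}$ would break down. A secondary subtlety is to confirm that each $\cZ(\sigma)$ is truly a single irreducible component rather than a larger effective sum; this is forced by the existence, within each $\overline{\cZ}(J)$, of finite type points with $\sigmabar(\tau)_J$ as unique Serre weight, combined with the term-by-term comparison of the two decompositions of $\cZ(\tau)$.
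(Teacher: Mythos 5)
Your proposal is correct and follows essentially the same route as the paper: localise at each finite type point, use versality together with the generic reducedness theorem to identify the image of $\cZ(\tau)$ in $K(R^{\dd,1}_x)$ with the Breuil--M\'ezard cycle $Z(R^\square_{\rbar,0,\tau}/\varpi)$ and hence $\cZ(\sigma)_x$ with $\cC_\sigma$, pull back the conclusions of Theorem~\ref{thm: geometric BM} through Lemma~\ref{lem: can read off reduced and effective from local rings}, and then compare the resulting decomposition of $\cZ(\tau)$ with the component decomposition $\sum_{J\in\cP_\tau}\overline{\cZ}(J)$. Your endgame is a mild streamlining of the paper's (you deduce $\overline{\cZ}(J)\subseteq\cZ(\sigmabar(\tau)_J)$ directly from Theorem~\ref{thm:main thm cegsC}(4) and part~(2), then count, whereas the paper first counts and then identifies $J'=J$ by intersecting dense open sets); just note that Remark~\ref{rem:maximalfamilies} and Theorem~\ref{thm:main thm cegsC}(4) only assert that $\sigmabar(\tau)_J$ \emph{is} a Serre weight of those points, not the unique one (uniqueness is neither given nor needed for your containment), and that the uniqueness claim in~(4) still rests on the existence of representations whose only Serre weight is a given non-Steinberg $\sigma$, a standard fact the paper likewise invokes without proof.
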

\begin{proof} Let~$x$ be a finite type point of $\cZ^{\dd,1}$
  corresponding to $\rbar:G_K\to\GL_2(\F')$, and
  write $\cZ(\sigma)_x$, $\cZ(\tau)_x$ for the images in $K(R^{\dd,1}_x)$ of
  $\cZ(\sigma)$ and $\cZ(\tau)$ respectively. Each $\Spec R^{\tau,1}_x$
  is a closed subscheme of $\Spec R^{\square}$, the universal framed deformation
  $\cO_{E'}$-algebra for~$\rbar$, so we may regard the $\cZ(\tau)_x$
  as formal sums (with multiplicities) of irreducible subschemes
  of~$\Spec R^{\square}/\pi$. 

  By definition, $\cZ(\tau)_x$ is just the underlying cycle of
  $\Spec R^{\tau,1}_x$. By Theorem~\ref{prop: generically reduced
    special fibre deformation ring}, this is equal to the underlying
  cycle of $\Spec R_{\rbar}^{\tau,\BT}/\varpi$. Consequently,
  $\cZ(\sigma)_x$ is the cycle denoted by~$\cC_{\sigma}$ in
  Section~\ref{sec:appendix on geom BM}. It follows from
  Theorem~\ref{thm: geometric BM} that:
  \begin{itemize}
  \item $\cZ(\sigma)_x$ is effective, and is nonzero precisely when
    $\sigma$ is a Serre weight for~$\rbar$.
  \item For each tame type $\tau$, we have
   $\cZ(\tau)_x=\sum_{\sigma\in\JH(\sigmabar(\tau))}\cZ(\sigma)_x$.
  \end{itemize}
Applying Lemma~\ref{lem: can read off reduced and effective from local
  rings}, we see that each $\cZ(\sigma)$ is effective, and that~(3)
holds. Since $\cZ^{\tau,1}$ is reduced, $\cZ(\tau)$ is reduced and
effective, so it follows from~(3) that each~$\cZ(\sigma)$ is
reduced and effective. Since $x$ is a finite type point of
$\cZ(\sigma)$ if and only if $\cZ(\sigma)_x\ne 0$, we have also proved~(2).

Since every irreducible component of~$\cZ^{\dd,1}$ is an irreducible
component of some~$\cZ^{\tau,1}$, in order to prove~(1) and~(4) it
suffices to show that for each~$\tau$, every irreducible component
of~$\cZ^{\tau,1}$ is of the form~$\cZ(\sigmabar(\tau)_J)$ for some~$J$, and
that each~$\cZ(\sigmabar(\tau)_J)$ is irreducible. Since $\tau$ is fixed for the rest of the argument, let us simplify notation by writing $\sigmabar_J$ for $\sigmabar(\tau)_J$.
Now, by
Theorem~\ref{thm:main thm cegsC}(2), we know
that~$\cZ^{\tau,1}$ has exactly~$\#\cP_\tau$ irreducible components,
namely the~$\cZbar(J')$ for~$J'\in\cP_\tau$. On the other hand,
the~$\cZ(\sigmabar_J)$ are reduced and effective, and since by the results
of \cite{gls13} there exist representations admitting~$\sigmabar_J$ as their
unique non-Steinberg Serre weight\footnote{More precisely, it follows from the results of \cite{gls13} that there exist $[K:\Q]$-dimensional extension spaces $V$ of reducible representations admitting~$\sigmabar_J$ as a Serre weight, such that the representations in $V$ admitting at least one other non-Steinberg weight lie in a finite union of proper subspaces;\ moreover the number of subspaces in this finite union is independent of $\F'$.}, it follows from~(2) that for each~$J$, there must
be a~$J'\in\cP_\tau$ such that~$\cZbar(J')$ contributes
to~$\cZ(\sigmabar_J)$, but not to any~$\cZ(\sigmabar_{J''})$
for~$J''\ne J$. 

Since~$\cZ(\tau)$ is reduced and effective, and the sum in~(3) is
over~$\#\cP_\tau$ weights~$\sigma$, it follows that we in fact have
$\cZ(\sigmabar_J)=\cZbar(J')$. This proves~(1) and~(4), and to
prove~(5), it only remains to show that~$J'=J$. To see this, note that
by~(2), $\cZ(\sigmabar_J)=\cZbar(J')$ has a dense open substack whose
finite type points have~$\sigmabar_J$ as their unique non-Steinberg Serre weight
(namely the complement of the union of the~$\cZ(\sigma')$ for
all~$\sigma'\ne\sigmabar_J$). By Theorem~\ref{thm:main thm cegsC}(4), 
it also has a dense open substack whose finite type points
have~$\sigmabar_{J'}$ as a Serre weight. Considering any finite type
point in the intersection of these dense open substacks, we see
that~$\sigmabar_J=\sigmabar_{J'}$, so that~$J=J'$, as required.
\end{proof}

\section{The geometric Breuil--M\'ezard conjecture for the stacks \texorpdfstring{$\cX_{2,\red}$}{X\_2,red}}\label{subsec:
  geometric BM book version}

We now explain how to transfer our results from the stacks $\cZ^{\dd,1}$ to the stacks $\cX_{2,\red}$ of \cite{EGmoduli}. The book \cite{EGmoduli} establishes an equivalence between the classical ``numerical'' Breuil--M\'ezard conjecture and the geometric Breuil--M\'ezard conjecture for the stacks $\cX_{2,\red}$.  (Indeed the implication from the former to the latter only requires the classical Breuil--M\'ezard conjecture at a single sufficiently generic point of each component of $\cX_{2,\red}$.) 

The Breuil--M\'ezard conjecture for two-dimensional potentially Barsotti--Tate respresentations is established in \cite{geekisin}, and this is extended to two-dimensional potentially semistable representations of Hodge type $0$ in \cite[Thm.~8.6.1]{EGmoduli}.  The arguments of \cite[\S8.3]{EGmoduli} translate this into the following theorem. Here $\cX_2^{\tau,\BT,\st}$ 
 denotes the stack of two-dimensional potentially semistable representations of Hodge type $0$ constructed in \cite{EGmoduli}, while $\sigma^{\st}(\tau)$ is as in \cite[Thm.~8.2.1]{EGmoduli}.

\begin{thm}[\cite{EGmoduli}]\label{thm:EG-BM} There exist effective cycles $Z^{\sigma}$ {\upshape{(}}elements of the free group on the irreducible components of $\cX_{2,\red}$, with nonnegative coefficients{\upshape{)}} such that for all inertial types $\tau$, 
\begin{itemize}
  \item the cycle of the special fibre of $\cX_{2}^{\tau,\BT}$ is equal to $\sum_{\sigma} m_{\sigma}(\tau) \cdot Z^{\sigma}$, while
  \item the cycle of the special fibre of $\cX_2^{\tau,\BT,\st}$ is equal to $\sum_{\sigma} m^{\st}_{\sigma}(\tau) \cdot Z^{\sigma}$.
\end{itemize}  Here $\sigmabar(\tau) = \sum_{\sigma} m_{\sigma}(\tau) \cdot \sigma$  and $\sigmabar^{\st}(\tau) = \sum_{\sigma} m^{\st}_{\sigma}(\tau) \cdot \sigma$ in the Grothendieck group of $\GL_2(k)$.
\end{thm}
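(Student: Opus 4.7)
The plan is to deduce this result by invoking the general equivalence between the classical (numerical) Breuil--M\'ezard conjecture and its geometric refinement for the stacks $\cX_{2,\red}$ that is established in \cite{EGmoduli}, and then feeding in the known cases of classical Breuil--M\'ezard. First I would recall from \cite{EGmoduli} that the irreducible components of $\cX_{2,\red}$ are naturally labelled by Serre weights (Theorem~\ref{thm:intro-thm-EG}), so that $K(\cX_{2,\red})$ has a distinguished basis. The book then shows that one can associate to each Serre weight $\sigma$ a nonnegative integer combination $Z^\sigma$ of irreducible components, characterized by the requirement that the cycle of the special fibre of every $\cX_2^{\lambda,\tau}$ decomposes as a nonnegative integer combination of the $Z^\sigma$ with multiplicities read off from the Jordan--H\"older decomposition of $\sigmabar^{\lambda}(\tau)$.

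Next I would explain that the existence of such $Z^\sigma$ is equivalent to the classical numerical Breuil--M\'ezard identity at a single sufficiently generic finite type point on each irreducible component of $\cX_{2,\red}$. Concretely, the strategy is to use versal rings at such generic points (which, by Theorem~\ref{thm:intro-thm-EG} combined with the dense-extensions description, can be chosen to lie on exactly one component) to translate multiplicities of components of $\cX_2^{\lambda,\tau}$ into Hilbert--Samuel multiplicities of potentially semistable deformation rings; the effectivity and nonnegative-integrality of the coefficients then follow from the fact that Hilbert--Samuel multiplicities are nonnegative integers, and the desired formula follows from the classical Breuil--M\'ezard identity on deformation rings.

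The two required inputs are then the numerical Breuil--M\'ezard conjecture for two-dimensional potentially Barsotti--Tate representations, which is the main theorem of \cite{geekisin}, and its extension to two-dimensional potentially semistable representations of Hodge type $0$, which is \cite[Thm.~8.6.1]{EGmoduli}. Plugging these in, one obtains simultaneously the formula
\[ Z(\cX_2^{\tau,\BT}\times_{\cO}\F) = \sum_\sigma m_\sigma(\tau)\, Z^\sigma \qquad\text{and}\qquad Z(\cX_2^{\tau,\BT,\st}\times_{\cO}\F) = \sum_\sigma m^{\st}_\sigma(\tau)\, Z^\sigma, \]
with the \emph{same} cycles $Z^\sigma$ on both sides, since the cycles are determined by the Barsotti--Tate family and the semistable multiplicities $m^{\st}_\sigma(\tau)$ satisfy the analogous numerical identity on framed deformation rings.

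The main conceptual obstacle, which is the content of \cite{EGmoduli} rather than of the present paper, is the equivalence between the classical and geometric formulations; this requires showing that Hilbert--Samuel multiplicities at sufficiently generic points of components compute the multiplicities with which those components appear in the cycles of the special fibres of $\cX_2^{\lambda,\tau}$, and, conversely, that the uniformity of these multiplicities across a component (together with the numerical conjecture at each generic point) forces a global cycle-theoretic identity. Once this equivalence is granted, the present theorem is a formal consequence of \cite{geekisin} and \cite[Thm.~8.6.1]{EGmoduli}, and no further work is needed here; the real content of our paper enters only in the next stage, when we identify the cycles $Z^\sigma$ explicitly using Theorem~\ref{thm:intro=gen-red} and Theorem~\ref{thm:stack version of geometric Breuil--Mezard}.
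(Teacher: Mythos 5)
Your proposal is correct and follows essentially the same route as the paper: the theorem is attributed to \cite{EGmoduli}, being deduced from the equivalence there between the numerical and geometric Breuil--M\'ezard conjectures (needed only at a single sufficiently generic point of each component), with the numerical inputs being \cite{geekisin} in the potentially Barsotti--Tate case and \cite[Thm.~8.6.1]{EGmoduli} in the semistable Hodge type $0$ case, and with the cycles $Z^{\sigma}$ built from the Breuil--M\'ezard multiplicities at generic points exactly as you describe (cf.\ Remark~\ref{rem:how-to-get-Z}). No further comment is needed.
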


\begin{cor}[\cite{EGmoduli}]\label{thm:EG-serre-weights}
Let $\rbar : G_K \to \GL_2(\F')$ be a continuous Galois representation, corresponding to a finite type point of $\cX_{2,\red}$. For each Serre weight $\sigma$ we have $\sigma \in W(\rbar)$ if and only if $\rbar$ lies in the support of $Z^{\sigma}$.
\end{cor}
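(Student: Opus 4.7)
The plan is to deduce the corollary from the cycle-level ``stronger statement'' of Theorem~\ref{thm:component-thm}, which identifies $Z^\sigma = \cX^\sigma_{2,\red}$ for non-Steinberg $\sigma$ and $Z^{\chi \otimes \St} = \cX^\chi_{2,\red} + \cX^{\chi \otimes \St}_{2,\red}$ for Steinberg $\sigma \cong \chi \otimes \St$. Once these identifications are in hand, the support of $Z^\sigma$ as a cycle coincides with the reduced substack $\cZ^\sigma$ of Theorem~\ref{thm:component-thm}, and the set-theoretic part of that theorem then says precisely that $\rbar$ lies on $\cZ^\sigma$ if and only if $\sigma \in W(\rbar)$, which is the corollary. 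Effectivity of the $Z^\sigma$ (coming from Theorem~\ref{thm:EG-BM}) is what makes ``support of the cycle'' agree with ``support of the underlying substack.''

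The cycle identifications themselves are obtained by transferring information from $\cZ^{\dd,1}$ to $\cX_{2,\red}$ via versal rings, without directly comparing the two stacks. For non-Steinberg $\sigma$, choose a tame type $\tau$ with $m_\sigma(\tau) = 1$ and a point $\rbar$ on $\cX^\sigma_{2,\red}$ generic enough to lie on no other irreducible component of $\cX_{2,\red}$; such points exist densely by Theorem~\ref{thm:intro-thm-EG}. The ring $R^{\tau,\BT}_{\rbar}$ is a versal ring to $\cX_2^{\tau,\BT}$ at $\rbar$, so the cycle identity $Z(\cX_2^{\tau,\BT}/\varpi) = \sum_{\sigma'} m_{\sigma'}(\tau) Z^{\sigma'}$ of Theorem~\ref{thm:EG-BM} pulls back to the corresponding identity on $\Spec R^{\tau,\BT}_{\rbar}/\varpi$. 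The generic reducedness Theorem~\ref{thm:intro=gen-red} then forces the coefficient of $\cX^\sigma_{2,\red}$ in $Z^\sigma$ to equal $1$, while effectivity of $Z^\sigma$ and the generic choice of $\rbar$ force all other components of $\cX_{2,\red}$ to have coefficient $0$ in $Z^\sigma$. For the Steinberg case $\sigma \cong \chi \otimes \St$, the very same argument is unavailable because the finite-type points of $\cX^{\chi \otimes \St}_{2,\red}$ are the tr\`es ramifi\'ees, which admit no tame potentially Barsotti--Tate lifts (equivalently, lie on no $\cX_2^{\tau,\BT}$). Here one instead invokes the semistable half of Theorem~\ref{thm:EG-BM} -- which identifies $Z(\cX_2^{\tau,\BT,\st}/\varpi) = \sum_{\sigma'} m^\st_{\sigma'}(\tau) Z^{\sigma'}$ -- and an analogous generic-reducedness input for a suitably chosen potentially semistable deformation ring at a tr\`es ramifi\'ee $\rbar$ to obtain $Z^{\chi \otimes \St} = \cX^\chi_{2,\red} + \cX^{\chi \otimes \St}_{2,\red}$.

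The hard part will be the Steinberg case: the tr\`es ramifi\'ees lie entirely outside the reach of the stacks $\cZ^{\dd,1}$ studied in the earlier sections, so nothing from \emph{loc.~cit.} applies to $\cX^{\chi \otimes \St}_{2,\red}$ directly, and one is forced to bring the full potentially semistable machinery of \cite{EGmoduli} to bear. The non-Steinberg case, by contrast, is essentially a clean consequence of Theorems~\ref{thm:intro=gen-red} and~\ref{thm:EG-BM} once the versal-ring identification is set up.
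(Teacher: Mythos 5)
Your deduction runs in the wrong direction and is circular relative to the paper's logic. Corollary~\ref{thm:EG-serre-weights} is an \emph{input} to the identification of the cycles, not a consequence of it: the opening step of the proof of Theorem~\ref{thm:component-thm-in-last-sec} is precisely the statement that the finite type points in the support of $Z^{\sigma}$ are the $\rbar$ with $\sigma\in W(\rbar)$, and the set-theoretic assertion of Theorem~\ref{thm:component-thm} that you invoke (``$\sigma\in W(\rbar)$ if and only if $\rbar$ lies on $\cZ^{\sigma}$'') is itself obtained by combining that support description with the cycle identities. Knowing $Z^{\sigma}=\cX^{\sigma}_{2,\red}$ alone tells you nothing about which Galois representations lie on $\cX^{\sigma}_{2,\red}$: Theorem~\ref{thm:intro-thm-EG} describes only a dense set of points of each component. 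Nor does your sketched route to the cycle identities close this loop independently: versality at a point $\rbar$ lying on $\cX^{\sigma}_{2,\red}$ and on no other component only controls the coefficient of $\cX^{\sigma}_{2,\red}$ in the various $Z^{\sigma'}$; it cannot rule out other components $\cX^{\sigma'}_{2,\red}$ occurring in $Z^{\sigma}$. In the paper those are excluded by Lemma~\ref{lem:one-rep-with-one-serre-weight} \emph{together with} exactly the support description you are trying to prove, so your argument presupposes the corollary.

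The actual proof is pointwise and much cheaper, needing no knowledge of which components constitute $Z^{\sigma}$ and no separate treatment of the Steinberg case. By the construction of the cycles $Z^{\sigma}$ in \cite{EGmoduli} (the equivalence of the numerical and geometric Breuil--M\'ezard conjectures, i.e.\ the argument of \cite[\S 8.4]{EGmoduli}, with the numerical conjecture known by \cite{geekisin}), the multiplicity of $Z^{\sigma}$ at any finite type point $\rbar$ is the Breuil--M\'ezard multiplicity $\mu_{\sigma}(\rbar)$; hence $\rbar$ lies in the support of $Z^{\sigma}$ if and only if $\mu_{\sigma}(\rbar)>0$, i.e.\ if and only if $\sigma\in W_{\BT}(\rbar)$, and $W_{\BT}(\rbar)=W(\rbar)$ by the main results of \cite{gls13}. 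In particular the generic reducedness statement (Theorem~\ref{prop: generically reduced special fibre deformation ring}) and the potentially semistable deformation rings that you flag as the ``hard part'' play no role in this corollary; they enter only later, in pinning down the multiplicities in the proof of Theorem~\ref{thm:component-thm-in-last-sec}, for which the corollary is already available.
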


\begin{proof}
This follows by the argument in \cite[\S8.4]{EGmoduli}:\ the Breuil--M\'ezard multiplicity $\mu_\sigma(\rbar)$ is nonzero if and only if $Z^{\sigma}$ is supported at $\rbar$. More precisely, invoking \cite[Thm.~8.6.1]{EGmoduli} in place of Conj.~8.2.2 of \emph{loc.\ cit.}, the next-to-last paragraph of \cite[\S8.4]{EGmoduli} shows that $\rbar$ lies in the support of $Z^{\sigma}$ if and only if $\sigma$ is an element of the weight set $W_{\BT}(\rbar) = \{ \sigma : \mu_{\sigma}(\rbar) > 0\}$. But $W_{\BT}(\rbar) = W(\rbar)$ by the main results of \cite{gls13}.
\end{proof}

\begin{remark}\label{rem:how-to-get-Z} The cycles $Z^{\sigma}$ of Theorem~\ref{thm:EG-BM} are constructed in \cite[\S8.3]{EGmoduli} as follows. For each Serre weight $\sigma'$, the  smooth points of $\cX^{\sigma'}_{2,\red}$ that furthermore do not lie on any other component of $\cX_{2,\red}$ are dense. Choose such a point $\rbar_{\sigma'}$ and let $\{\mu_{\sigma}(\rbar_{\sigma'})\}$ be the multiplicities in  the Breuil--M\'ezard conjecture for  $\rbar_{\sigma'}$. Then $Z^{\sigma} := \sum_{\sigma'} \mu_{\sigma}(\rbar_{\sigma'}) \cdot \cX^{\sigma'}_{2,\red}$.
\end{remark}

It remains to compute the cycles $Z^{\sigma}$. We begin with the following observation, which could  be proved  with modest effort by calculating dimensions of families of extensions and using the results of \cite{gls13}, but is also easily deduced from the results of Section~\ref{sec: picture}.

\begin{lemma}\label{lem:one-rep-with-one-serre-weight}
Let $\sigma,\sigma'$ be Serre weights and suppose that $\sigma'$ is non-Steinberg. Then $\cX^{\sigma}_{2,\red}$ contains at least one finite type point corresponding to a representation $\rbar$ with $\sigma' \not\in W(\rbar)$.
\end{lemma}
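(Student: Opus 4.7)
The plan is to combine the results of Section~\ref{sec: picture} on the components of $\cZ^{\dd,1}$ with the combinatorial description of the labelling of components of $\cX_{2,\red}$ from \cite{EGmoduli} (recalled around Theorem~\ref{thm:intro-thm-EG}). Both labellings are pinned down by the tame inertia data of a dense set of maximally non-split reducible extensions; so by using Theorem~\ref{thm:stack version of geometric Breuil--Mezard} to control $W(\rbar)$ on the $\cZ^{\dd,1}$ side, I can exhibit explicit representations with the desired Serre weight properties, and then identify them with points of $\cX^\sigma_{2,\red}$ via matching of combinatorial data.

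First I would treat the case when $\sigma$ is Steinberg. By the combinatorial labelling in \cite{EGmoduli}, $\cX^\sigma_{2,\red}$ contains a dense set of finite type points corresponding to twists of tr\`es ramifi\'ee extensions. As noted in the paragraph following Theorem~\ref{thm:intro-main-Z}, such $\rbar$ do not admit any tame potentially Barsotti--Tate lift, and in particular do not correspond to finite type points of $\cZ^{\dd,1}$. Since $\sigma'$ is non-Steinberg, Theorem~\ref{thm:stack version of geometric Breuil--Mezard}(2) tells us that $\sigma' \in W(\rbar)$ would force $\rbar$ to lie on $\cZ(\sigma') \subset \cZ^{\dd,1}$; we conclude $\sigma' \notin W(\rbar)$, as desired.

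Next I turn to the case when $\sigma$ is non-Steinberg; the remaining substantial case is $\sigma \ne \sigma'$, so I assume this. By Theorem~\ref{thm:stack version of geometric Breuil--Mezard}(1) and (5), $\cZ(\sigma) = \overline{\cZ}(J)$ for some tame type $\tau$ and some $J \in \cP_\tau$ with $\sigmabar(\tau)_J = \sigma$, and $\cZ(\sigma)$ is distinct from $\cZ(\sigma')$. Hence $\cZ(\sigma) \setminus \cZ(\sigma')$ is a non-empty open substack of $\cZ(\sigma)$. Using Remark~\ref{rem:maximalfamilies}, I can find a finite type point $\rbar$ in $\cZ(\sigma) \setminus \cZ(\sigma')$ corresponding to a specific reducible (peu ramifi\'ee where needed) representation whose $I_K$-restriction is an extension of explicit products of fundamental characters determined by $\sigma$. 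By Theorem~\ref{thm:stack version of geometric Breuil--Mezard}(2), this $\rbar$ satisfies $\sigma' \notin W(\rbar)$.

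Finally, I must check that this $\rbar$ lies on $\cX^\sigma_{2,\red}$ in $\cX_{2,\red}$. The main obstacle is making the compatibility of the two labellings explicit. However, both the combinatorial labelling of components of $\cX_{2,\red}$ in \cite{EGmoduli} and the labelling of components of $\cZ^{\dd,1}$ by Serre weights (via the shapes $J \in \cP_\tau$ recalled in Section~\ref{subsec: notation} and Theorem~\ref{thm:main thm cegsC}) are defined via the tame inertia data, together with peu/tr\`es ramifi\'ee information, of the characters appearing in a dense set of maximally non-split reducible extensions. The explicit $\rbar$ produced via Remark~\ref{rem:maximalfamilies} has tame inertia characters matching precisely the combinatorial data labelling $\cX^\sigma_{2,\red}$, so $\rbar$ lies on $\cX^\sigma_{2,\red}$, completing the proof.
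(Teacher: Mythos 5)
Your proof is correct and follows essentially the same route as the paper: for Steinberg $\sigma$ you use the tr\`es ramifi\'ee points of $\cX^{\sigma}_{2,\red}$ (the paper quotes \cegsBtresramlemma\ directly, where you instead deduce the absence of non-Steinberg weights from Theorem~\ref{thm:stack version of geometric Breuil--Mezard}(2) together with the description of the points of $\cZ^{\dd,1}$), and for non-Steinberg $\sigma$ you pick a point as in Remark~\ref{rem:maximalfamilies} lying in $\cZ(\sigma)$ but off $\cZ(\sigma')$ and identify it with the niveau~$1$ family defining $\cX^{\sigma}_{2,\red}$, exactly as the paper does. Your restriction to $\sigma' \neq \sigma$ is harmless: the paper's own argument likewise only yields that case, which is the only one invoked in the proof of Theorem~\ref{thm:component-thm-in-last-sec}.
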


\begin{proof}
Suppose first that $\sigma$ is non-Steinberg. By Theorem~\ref{thm:stack version of geometric Breuil--Mezard} the component $\cZ(\sigma)$ of $\cZ^{\dd,1}$ has a dense open set $U(\sigma)$ whose finite type points $\rbar$ have no non-Steinberg Serre weights other than $\sigma$:\ take $\cZ(\sigma) \setminus \cup_{\sigma' \neq \sigma} \cZ(\sigma')$. The finite type points of $\cZ(\sigma)$ described in Remark~\ref{rem:maximalfamilies} are also dense;\  therefore at least one of them (indeed a dense set of them) lies in $U(\sigma)$.  Let $\rbar$ be such a representation. The finite type points of $\cZ(\sigma)$ described in Remark~\ref{rem:maximalfamilies} are precisely the family of niveau $1$ representations in the description of $\cX^{\sigma}_{2,\red}$ of Theorem~\ref{thm:intro-thm-EG} (and whose construction can be found in the proof of \cite[Thm~5.5.12]{EGmoduli} and its correction in the errata to \emph{loc.\ cit.}). So $\rbar$ lies on $\cX^{\sigma}_{2,\red}$ as well, and by construction the only non-Steinberg weight in $W(\rbar)$ is $\sigma$.  This completes the non-Steinberg case.

If instead $\sigma$ is Steinberg, then by construction, and by the second bullet point in \cite[Def.~5.5.1]{EGmoduli}, $\cX^{\sigma}_{2,\red}$ contains representations $\rbar$ that are tr\`es ramifi\'e. But tr\`es ramifi\'e representations have no non-Steinberg Serre weights by \cegsBtresramlemma.
\end{proof}

\begin{remark}\label{rem:promotion}
Once we have proved Theorem~\ref{thm:component-thm-in-last-sec} below, ``at least one finite type point'' in the statement of Lemma~\ref{lem:one-rep-with-one-serre-weight} can be promoted to ``a dense set of finite type points'' by taking $\cX^{\sigma}_{2,\red} \setminus \cup_{\sigma'\neq \sigma} \cX^{\sigma'}_{2,\red}$.
\end{remark}

We now reach our main theorem.

\begin{thm}\label{thm:component-thm-in-last-sec} Suppose $p > 2$. We have:
\begin{itemize}
  \item $Z^{\sigma} = \mathcal{X}_{2,\red}^\sigma$, if the weight $\sigma$ is not Steinberg, while
  \item $Z^{\chi\otimes \St} = \mathcal{X}_{2,\red}^\chi + \mathcal{X}_{2,\red}^{\chi \otimes \St}$ if the weight  $\sigma \cong \chi \otimes \St$ is Steinberg.
\end{itemize}
In particular $\sigma \in W(\rbar)$ if and only if $\rbar$ lies on $\cX^{\sigma}_{2,\red}$ if $\sigma$ is not Steinberg, or on $\mathcal{X}_{2,\red}^\chi \cup \mathcal{X}_{2,\red}^{\chi \otimes \St}$ if $\sigma \cong \chi \otimes \St$ is Steinberg.
\end{thm}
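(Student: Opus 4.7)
The plan is to compute the cycles $Z^{\sigma}$ via the local description in Remark~\ref{rem:how-to-get-Z}, which expresses
\[
Z^{\sigma}=\sum_{\sigma'}\mu_{\sigma}(\rbar_{\sigma'})\cdot\cX^{\sigma'}_{2,\red}
\]
for $\rbar_{\sigma'}$ a generic smooth point of $\cX^{\sigma'}_{2,\red}$ lying on no other component, with $\mu_{\sigma}(\rbar_{\sigma'})$ the classical Breuil--M\'ezard multiplicity. The basic engine is Corollary~\ref{thm:EG-serre-weights}:\ if some $\rbar\in\cX^{\sigma'}_{2,\red}$ satisfies $\sigma\notin W(\rbar)$, then $\rbar$ lies outside the support of $Z^{\sigma}$, and this forces $\mu_{\sigma}(\rbar_{\sigma'})=0$.

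For non-Steinberg $\sigma$ and any $\sigma'\neq\sigma$, applying Lemma~\ref{lem:one-rep-with-one-serre-weight} with the roles of its two Serre weights interchanged produces such an $\rbar$, so $\mu_{\sigma}(\rbar_{\sigma'})=0$ and $Z^{\sigma}=\mu_{\sigma}(\rbar_{\sigma})\cdot\cX^{\sigma}_{2,\red}$. To compute $\mu_{\sigma}(\rbar_{\sigma})$, I would pick a tame type $\tau$ with $\sigma\in\JH(\sigmabar(\tau))$ (so $m_{\sigma}(\tau)=1$, and since $\tau$ is tame no Jordan--H\"older factor of $\sigmabar(\tau)$ is Steinberg). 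Restricting the identity of Theorem~\ref{thm:EG-BM} to the formal neighborhood at $\rbar_{\sigma}$, every term other than the $\sigma$-term vanishes, so the cycle of $\Spec R^{\tau,\BT}_{\rbar_{\sigma}}/\varpi$ equals $\mu_{\sigma}(\rbar_{\sigma})\cdot[\mathfrak{p}_{\sigma}]$, where $\mathfrak{p}_{\sigma}$ is the unique local prime of $\Spec R^{\square}_{\rbar_\sigma}/\varpi$ coming from $\cX^{\sigma}_{2,\red}$ (unique because $\rbar_{\sigma}$ is smooth on $\cX^{\sigma}_{2,\red}$ and off every other component). On the other hand $R^{\tau,\BT}_{\rbar_{\sigma}}/\varpi$ is generically reduced by Theorem~\ref{thm:intro=gen-red}, so its cycle has multiplicity one at each minimal prime; comparing gives $\mu_{\sigma}(\rbar_{\sigma})=1$ and therefore $Z^{\sigma}=\cX^{\sigma}_{2,\red}$.

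For Steinberg $\sigma=\chi\otimes\St$, the vanishing $\mu_{\chi\otimes\St}(\rbar_{\sigma'})=0$ for $\sigma'\notin\{\chi,\chi\otimes\St\}$ is handled case by case:\ if $\sigma'=\chi'\otimes\St$ is Steinberg with $\chi'\neq\chi$, then a tr\`es ramifi\'ee representation on $\cX^{\sigma'}_{2,\red}$ has Serre weight set $\{\chi'\otimes\St\}$ by \cegsBtresramlemma, which excludes $\chi\otimes\St$; if $\sigma'\neq\chi$ is non-Steinberg, the dense family of reducibles on $\cX^{\sigma'}_{2,\red}$ of Remark~\ref{rem:maximalfamilies} has tame inertia weights incompatible with $\chi\otimes\St\in W(\rbar)$ in the description of \cite{gls13}. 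To pin down $\mu_{\chi\otimes\St}(\rbar_{\chi})=\mu_{\chi\otimes\St}(\rbar_{\chi\otimes\St})=1$, I would run the non-Steinberg template with the potentially semistable (non-crystalline) deformation ring $R^{\chi\oplus\chi,\BT,\st}_{\rbar}$ of scalar inertial type replacing $R^{\tau,\BT}_{\rbar}$, and with the stack $\cX_{2}^{\chi\oplus\chi,\BT,\st}$, whose special fibre decomposes by Theorem~\ref{thm:EG-BM} as $Z^{\sigma_{\chi}}+Z^{\chi\otimes\St}$, where $\sigma_{\chi}$ is the non-Steinberg Jordan--H\"older factor of $\sigmabar(\sigma^{\st}(\chi\oplus\chi))$ (and is already covered by the previous case).

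The main obstacle is thus the Steinberg case, and more precisely the need for a generic reducedness statement for $R^{\chi\oplus\chi,\BT,\st}_{\rbar}/\varpi$ analogous to Theorem~\ref{thm:intro=gen-red}. This can be obtained either by porting the scheme-theoretic image argument of Section~\ref{subsec: generically reduced} to a $\cZ$-type moduli stack of potentially semistable representations (constructed in the style of \cite{cegsB,cegsC}), or by invoking known generic reducedness results for semistable deformation rings in the literature. With that input in hand, the final equivalence characterising $W(\rbar)$ is immediate from Corollary~\ref{thm:EG-serre-weights} together with the cycle formulas computed above.
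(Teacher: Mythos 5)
Your non-Steinberg argument is essentially the paper's: kill the off-diagonal multiplicities with Lemma~\ref{lem:one-rep-with-one-serre-weight} and Corollary~\ref{thm:EG-serre-weights}, then get $\mu_\sigma(\rbar_\sigma)\le 1$ from Theorem~\ref{prop: generically reduced special fibre deformation ring} at the smooth point $\rbar_\sigma$; that part is fine. The Steinberg case, however, has a genuine gap, and you have in effect flagged it yourself: your computation of $\mu_{\chi\otimes\St}(\rbar_{\chi})$ and $\mu_{\chi\otimes\St}(\rbar_{\chi\otimes\St})$ rests on a generic reducedness statement for the special fibre of the potentially semistable (scalar type) deformation ring, which is neither proved in this paper nor available by simply ``porting'' Section~\ref{subsec: generically reduced} --- the stacks $\cZ^\tau$ there are built from Breuil--Kisin modules with tame descent data and see only potentially \emph{crystalline} representations, so there is no $\cZ$-type semistable moduli stack to hand, and no citation is given. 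The paper's proof is arranged precisely to avoid needing any such input: one chooses $\rbar_{\chi}$ to be an extension of an unramified twist of $\chi\bar\varepsilon^{-1}$ by a \emph{different} unramified twist of $\chi$, so that it has no semistable non-crystalline lifts and its semistable Hodge type $0$ deformation ring coincides with the crystalline one, to which Theorem~\ref{prop: generically reduced special fibre deformation ring} (for the scalar, hence tame, type) already applies; and at the tr\`es ramifi\'ee point $\rbar_{\chi\otimes\St}$ the semistable Hodge type $0$ deformation ring is an ordinary deformation ring, hence formally smooth, giving multiplicity one directly. Without one of these devices (or an actual reference/proof of semistable generic reducedness), your argument does not close.

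Two further points. First, your claimed decomposition of the cycle of the special fibre of $\cX_2^{\chi\oplus\chi,\BT,\st}$ as $Z^{\sigma_\chi}+Z^{\chi\otimes\St}$ is incorrect: the reduction mod $p$ of the Steinberg representation of $\GL_2(k)$ is irreducible, equal to $\sigmabar_{\vec 0,\overrightarrow{p-1}}$ (this is used in the proof of Lemma~\ref{lem:write Serre weight as linear combination of types}), so $\sigmabar^{\st}(\chi\oplus\chi)=\chi\otimes\St$ and the cycle is just $Z^{\chi\otimes\St}$; there is no non-Steinberg Jordan--H\"older factor $\sigma_\chi$. Second, your case-by-case vanishing of $\mu_{\chi\otimes\St}(\rbar_{\sigma'})$ for non-Steinberg $\sigma'\ne\chi$ is asserted by appeal to ``tame inertia weights incompatible in \cite{gls13}'' but not actually verified; the paper instead identifies the support of $Z^{\St}$ directly as the locus of representations admitting a semistable Hodge type $0$ lift (since that cycle \emph{is} the special fibre of $\cX_2^{\triv,\BT,\st}$), and then sorts these representations onto $\cX^{\triv}_{2,\red}\cup\cX^{\St}_{2,\red}$ using the already-proved non-Steinberg case together with the explicit shape of representations with semistable non-crystalline lifts. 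Your route for this sub-step could likely be repaired with an explicit weight computation, but as written it is an assertion, and the multiplicity gap above is the substantive missing ingredient.
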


Essentially the same statement appears at \cite[Thm~8.6.2]{EGmoduli}, and the argument given there invokes an earlier version of this paper\footnote{As mentioned in the introduction, the reference [CEGS19, Thm.~5.2.2] in \cite{EGmoduli} is Theorem~\ref{thm:stack version of geometric Breuil--Mezard} of this paper, while the reference [CEGS19, Lem.~B.5] in \cite{EGmoduli} is \cegsBtresramlemma.}. The proof we give below is independent of the proof of \cite[Thm~8.6.2]{EGmoduli}, but has the same major beats, and is rearranged to delay until as late as possible making any references to the earlier parts of this paper. Indeed the proof  now only invokes the generic reducedness of Theorem~\ref{prop: generically reduced special fibre deformation ring} (but certainly invokes it in a crucial way);\ we hope that this clarifies the various dependencies involved. 

However, we also take the opportunity to repair a small gap in the argument given at \cite[Thm~8.6.2]{EGmoduli}. It is claimed there that [CEGS19, Thm.~5.2.2(2)] (i.e., our Theorem~\ref{thm:stack version of geometric Breuil--Mezard}(2)) implies that $\cX^{\sigma}_{2,\red}$ has a dense set of  finite type points corresponding to representations $\rbar$ whose only non-Steinberg Serre weight is $\sigma$. Although the conclusion is certainly true, the deduction is incorrect, or at least seems to presume that $\cX^{\sigma}_{2,\red}$ can be identified with our $\cZ(\sigma)$. We replace this claim with an argument using Lemma~\ref{lem:one-rep-with-one-serre-weight}, which does follow from Theorem~\ref{thm:stack version of geometric Breuil--Mezard} (or from the results of \cite{gls13}, as previously noted). The claim about  points of $\cX^{\sigma}_{2,\red}$ will follow once Theorem~\ref{thm:component-thm-in-last-sec} is proved, as explained in Remark~\ref{rem:promotion}.

\begin{proof}[Proof of Theorem~\ref{thm:component-thm-in-last-sec}]
Consider first the non-Steinberg case. The finite type points in the support of the effective cycle $Z^{\sigma}$ are precisely the representations having $\sigma$ as a Serre weight. By Lemma~\ref{lem:one-rep-with-one-serre-weight} each component $\cX^{\sigma'}_{2,\red}$ with $\sigma' \neq \sigma$ has a finite type point for which $\sigma$ is not a Serre weight;\ therefore $\cX^{\sigma'}_{2,\red}$ cannot occur in the cycle $Z^{\sigma}$. It follows that $Z^{\sigma} = \mu_{\sigma}(\rbar_{\sigma}) \cdot \cX^{\sigma}_{2,\red}$ with $\rbar_{\sigma}$ as in Remark~\ref{rem:how-to-get-Z}, and that $\mu_{\sigma}(\rbar_{\sigma'}) = 0$ for all $\sigma' \neq \sigma$. Note that we can already deduce that  $\sigma \in W(\rbar)$ if and only if $\rbar$ lies on $\cX^{\sigma}_{2,\red}$.

Now consider the Steinberg case;\ by twisting it will suffice to consider the weight $\sigma = \St$. The type $\sigma^{\st}(\triv)$ is the Steinberg type, and so by Theorem~\ref{thm:EG-BM} the cycle of the special fibre of $\cX^{\triv,\BT,\st}_{2}$ is equal to $Z^{\St}$. In particular the finite type points of $Z^{\St}$ are precisely the representations $\rbar$ having a  semistable lift of Hodge type~$0$. Such a lift is either crystalline, in which case the trivial Serre weight is a Serre weight for $\rbar$, and by the previous paragraph $\rbar$ is a finite type point of $\cX^{\triv}_{2,\red}$ ;\ or else the lift is semistable non-crystalline, in which case $\rbar$ is an unramified twist of an extension of the inverse of the cyclotomic character by the trivial character. Such an extension is either peu ramifi\'ee, in which case again $\triv \in W(\rbar)$, and $\rbar$ lies on $\cX^{\triv}_{2,\red}$ by the first paragraph of the proof;\ or else it is
tre\`s ramifi\'ee, and is a finite type point of $\cX^{\St}_{2,\red}$ (as a member of the  family of niveau $1$ representations defining $\cX^{\St}_{2,\red}$). Since all the finite type points of the support of $Z^{\St}$ are contained in $\cX^{\triv}_{2,\red} \cup \cX^{\St}_{2,\red}$ it follows that $Z^{\St} = \mu_{\St}(\rbar_{\triv})\cX^{\triv}_{2,\red} +  \mu_{\St}(\rbar_{\St})\cX^{\St}_{2,\red}$, and that $\mu_{\St}(\rbar_{\sigma'}) = 0$ for all $\sigma' \neq \triv,\St$.

In the remainder of the argument we can and do assume that $\rbar_{\triv}$ is an extension of an unramified twist of the inverse cyclotomic character by a \emph{different} unramified character:\ these are dense in $\cX^{\triv}_{2,\red}$ by the dominance of the eigenvalue morphism of \cite[Thm.~5.5.12(2)]{EGmoduli}.

It remains to determine the multiplicities $\mu_{\sigma}(\rbar_{\sigma})$, $\mu_{\St}(\rbar_{\triv})$, and $\mu_{\St}(\rbar_{\St})$. Each of these is positive because $\rbar_{\sigma}$ does have $\sigma$ as a Serre weight, while $\rbar_{\triv}$ has $\St$ as a Serre weight because each extension as in the previous paragraph has a crystalline lift with labelled Hodge--Tate weights $(p,0)$ at one embedding lifting each embedding $k \into \Fpbar$, and labelled Hodge-Tate weights $(1,0)$ at all other embeddings. 

Suppose that $\sigma$ is non-Steinberg. Choose any tame type $\tau$ such that $\sigmabar(\tau)$ has $\sigma$ as a Jordan--H\"older factor. The ring $R^{\tau,\BT}_{\rbar_{\sigma}}$ is versal to $\cX^{\tau,\BT}_2$ at $\rbar_{\sigma}$, and since $\rbar_{\sigma}$ is a smooth point of $\cX_{2,\red}$ the underlying reduced of $\Spec R^{\tau,\BT}_{\rbar_{\sigma}}/\varpi$ is smooth. But $\Spec R^{\tau,\BT}_{\rbar_{\sigma}}/\varpi$ is generically reduced by Theorem~\ref{prop: generically reduced special fibre deformation ring}. We deduce that the Hilbert--Samuel multiplicity of $R^{\tau,\BT}_{\rbar_{\sigma}}/\varpi$ is $1$, and therefore $\mu_{\sigma}(\rbar_\sigma) \le 1$. Since $\mu_{\sigma}(\rbar_\sigma)$ is positive it must be equal to $1$. Alternately, it follows from Theorem~\ref{thm:stack version of geometric Breuil--Mezard} and the isomorphism $\cZ^{\tau} \cong \mathcal{X}_{2}^{\tau,\BT}$ of \cite[Thm~4.5]{APAW} that the cycle of the special fiber of $\mathcal{X}_{2}^{\tau,\BT}$ is reduced and effective;\ since by Theorem~\ref{thm:EG-BM} this cycle contains $\mathcal{Z}^{\sigma}$ as a summand, we see again that $\mu_{\sigma}(\rbar_\sigma) \le 1$.

Our chosen $\rbar_{\triv}$ does not have any semistable non-crystalline lifts, and the semistable Hodge type $0$ deformation ring of $\rbar_{\triv}$ is simply a crystalline deformation ring, indeed one of the flat deformation rings studied by Kisin in \cite{kis04}. The argument in the previous paragraph showed that the Hilbert--Samuel multiplicity of $R^{\triv,\BT}_{\rbar_{\cris}}/\varpi$ is $1$.  It follows that $\mu_{\St}(\rbar_{\triv}) \le 1$, and since it is positive it must be precisely $1$. On the other hand the semistable Hodge type $0$ deformation ring of the tr\`es ramifi\'ee representation $\rbar_{\St}$ is an ordinary deformation ring, hence formally smooth, and we obtain $\mu_{\St}(\rbar_{\St}) = 1$.\end{proof}

\begin{remark}\label{rem:hi} The finite type points of $X^{\St}_{2,\red}$ are precisely those $\rbar$ having a semistable non-crystalline lift, i.e., the unramified twists of an extension of the inverse of the cyclotomic character by the trivial character;\ for the details see \cite[Lem.~8.6.4]{EGmoduli}.
\end{remark}

\renewcommand{\theequation}{\Alph{section}.\arabic{subsection}} 
\appendix

\section{A lemma on formal algebraic stacks}

   We suppose given a commutative diagram
   of morphisms of formal algebraic stacks
   $$\xymatrix{\cX \ar[r] \ar[dr] & \cY \ar[d] \\ & \Spf \cO}$$
         We suppose that each of $\cX$ and $\cY$ is quasi-compact
   and quasi-separated,
   and that the horizontal arrow is scheme-theoretically
   dominant, in the sense of \cite[Def.~6.13]{Emertonformalstacks}.
   We furthermore suppose that the morphism $\cX\to\Spf\cO$ realises
   $\cX$ as a finite type $\varpi$-adic formal algebraic stack.

   Concretely, 
   if we write $\cX^a := \cX\times_{\cO} \cO/\varpi^a,$
   then each $\cX^a$ is an algebraic stack,
   locally of finite type over $\Spec \cO/\varpi^a$, 
   and there is an isomorphism $\varinjlim_a \cX^a \iso \cX.$
   Furthermore, the assumption that the horizontal arrow
   is scheme-theoretically dominant means that we may
   find an isomorphism $\cY \cong \varinjlim_a \cY^a$,
   with each $\cY^a$ being a quasi-compact and quasi-separated
   algebraic stack, and with the transition morphisms being
   thickenings, such that the morphism $\cX \to \cY$
   is induced by a compatible family of morphisms
   $\cX^a \to \cY^a$, each of which is scheme-theoretically
   dominant. (The~$\cY^a$ are uniquely determined by the
         requirement that for all~$b\ge a$ large enough so that the
         morphism $\cX^a\to\cY$ factors
         through~$\cY\otimes_\cO\cO/\varpi^b$, $\cY^a$ is the
         scheme-theoretic image of the morphism
         $\cX^a\to\cY\otimes_\cO\cO/\varpi^b$. In particular, $\cY^a$
         is a closed substack of $\cY\times_\cO \cO/\varpi^a$.) 

   It is often the case, in the preceding situation,
   that $\cY$ is also a $\varpi$-adic formal algebraic stack.
   For example, we have the following result.
   (Note that the usual graph argument
   shows that the morphism $\cX\to \cY$ is necessarily
     algebraic, i.e.\ representable by algebraic 
  stacks, in the sense 
of~\cite[\href{http://stacks.math.columbia.edu/tag/06CF}{Tag 06CF}]{stacks-project} and \cite[Def.~3.1]{Emertonformalstacks}. Thus it makes sense to 
speak of it being proper, following \cite[Def.~3.11]{Emertonformalstacks}.)

   \begin{aprop}
     \label{prop:Y is p-adic formal}
     Suppose that the morphism $\cX \to \cY$ is proper,
and that $\cY$ is locally Ind-finite type over $\Spec \cO$
{\em (}in the sense of {\em \cite[Rem.~8.30]{Emertonformalstacks})}.
Then $\cY$ is a $\varpi$-adic formal algebraic stack.
   \end{aprop}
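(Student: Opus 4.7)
The strategy is to verify the defining properties of a $\varpi$-adic formal algebraic stack directly, using the presentation $\cY \cong \varinjlim_a \cY^a$ already supplied by the setup. Recall this requires showing that $\cY \to \Spec \cO$ factors through $\Spf \cO$ and that the induced map $\cY \to \Spf \cO$ is algebraic, i.e.\ that each base change $\cY \times_{\Spf \cO} \Spec \cO/\varpi^a$ is an algebraic stack. The setup already delivers most of what is needed: each $\cY^a$ is a quasi-compact and quasi-separated algebraic stack; being a closed substack of $\cY \otimes_\cO \cO/\varpi^a$, it is automatically a stack over $\Spec \cO/\varpi^a$; and the transitions $\cY^a \hookrightarrow \cY^{a+1}$ are thickenings. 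What remains is to upgrade ``quasi-compact and quasi-separated algebraic'' to ``locally of finite type over $\cO/\varpi^a$'' for each $\cY^a$, and to identify the truncations of $\cY$ correctly.

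For the finite-type property I would exploit the hypothesis that $\cY$ is locally Ind-finite type over $\Spec \cO$, together with quasi-compactness of $\cY^a$. Choose a smooth surjection $V \to \cY$ from a formal algebraic space $V$ that is locally Ind-finite type over $\cO$. Pulling back to $\cY^a$ yields a smooth surjection $V^a := V \times_\cY \cY^a \to \cY^a$ whose source is a closed sub-formal-algebraic-space of $V$ living over $\cO/\varpi^a$. Zariski-locally we have $V = \Spf B$ with $B = \varinjlim_\alpha B_\alpha$ a filtered colimit of finite type $\cO$-algebras along closed immersions; the closed subspace $V^a$ corresponds to a quotient $B/J$ with $\varpi^a \in J$, hence to a quotient of $B/\varpi^a = \varinjlim_\alpha B_\alpha/\varpi^a B_\alpha$. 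Quasi-compactness of $\cY^a$, which I would deduce from properness of $\cX \to \cY$ applied to the finite type stack $\cX^a$ together with scheme-theoretic surjectivity of $\cX^a \to \cY^a$, allows me to restrict attention to finitely many such affines and, using the colimit description of $B/\varpi^a$ together with quasi-compactness, to conclude that the relevant quotient factors through some fixed finite stage $B_\alpha/\varpi^a B_\alpha$, which is of finite type over $\cO/\varpi^a$.

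Once each $\cY^a$ is known to be an algebraic stack of finite type over $\cO/\varpi^a$, the colimit structure $\cY \cong \varinjlim \cY^a$ together with the thickening property of the transitions shows that $\cY \to \Spec \cO$ factors through $\Spf \cO$ and that the base change $\cY \times_{\Spf \cO} \Spec \cO/\varpi^a$ recovers $\cY^a$; this is precisely the condition for $\cY$ to be $\varpi$-adic. The main obstacle is the middle step: making careful use of the definition of ``locally Ind-finite type'' to pass from quasi-compactness of $\cY^a$ to actual finite type over the truncated base $\cO/\varpi^a$, via a careful affine-local argument on a smooth cover. The remainder is essentially bookkeeping: combining the setup's presentation with the definitions of scheme-theoretic image and of $\varpi$-adic formal algebraic stack.
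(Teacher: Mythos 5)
Your argument has a genuine gap at its final step, and it is exactly the point that the paper itself flags immediately after this proposition. Being $\varpi$-adic means that the base change $\cY \times_{\Spf \cO} \Spec \cO/\varpi^a$ is an algebraic stack for each $a$; it does \emph{not} suffice to know that the truncations $\cY^a$ appearing in the Ind-presentation $\cY \cong \varinjlim_a \cY^a$ are algebraic and of finite type over $\cO/\varpi^a$. Your closing claim that ``the colimit structure together with the thickening property of the transitions shows that \ldots the base change $\cY \times_{\Spf \cO} \Spec \cO/\varpi^a$ recovers $\cY^a$'' is unsupported, and indeed the paper points out right after the statement that the closed immersion $\cY^a \hookrightarrow \cY \times_{\cO} \cO/\varpi^a$ is typically \emph{not} an isomorphism even when $\cY$ is $\varpi$-adic. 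Worse, the implication you invoke is false in general: the formal scheme $\varinjlim_a \Spec \cO[x]/(\varpi,x)^a$ is a quasi-compact formal algebraic stack, locally Ind-finite type over $\cO$, written as a colimit along thickenings of algebraic stacks of finite type over $\cO/\varpi^a$, yet its base change to $\cO/\varpi$ is $\Spf \F[[x]]$, which is not algebraic; so it is not $\varpi$-adic. What rules out such ``extra formal directions'' in $\cY$ is precisely the existence of the proper, scheme-theoretically dominant morphism from the $\varpi$-adic stack $\cX$, and your argument only uses that morphism to extract quasi-compactness of $\cY^a$ (which is already part of the setup), never in a way that controls how the thickenings $\cY^a \hookrightarrow \cY^{a+1}$ grow. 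That control is the real content of the proposition.

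For comparison, the paper does not argue directly at all: it observes that the hypotheses place it exactly in the situation of \cite[Prop.~10.5]{Emertonformalstacks}, where the needed argument (using properness and scheme-theoretic dominance to bound the ideals of definition of $\cY$ in terms of $\varpi$) is carried out. Your first step, reducing finite-typeness of $\cY^a$ to an affine-local statement on a smooth cover via the Ind-finite type hypothesis, is reasonable in spirit but is also not the crux: even granting it, the passage from properties of the $\cY^a$ to algebraicity of $\cY \times_{\Spf \cO} \Spec \cO/\varpi^a$ is where a genuine argument is required and is missing from your proposal.
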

   \begin{proof}
     This is an application of \cite[Prop.~10.5]{Emertonformalstacks}.
   \end{proof}
  
A key point is that, because the formation of scheme-theoretic images
is not generally compatible with non-flat base-change,
the closed immersion
\anumequation
\label{eqn:closed immersion}
\cY^a \hookrightarrow \cY \times_{\cO} \cO/\varpi^a
\end{equation}
is typically {\em not} an isomorphism, even if $\cY$ is a $\varpi$-adic
formal algebraic stack.
Our goal in the remainder of this discussion is to give a criterion
(involving the morphism $\cX\to\cY$)
on an open substack $\cU \hookrightarrow \cY$ which guarantees
that the closed immersion
$\cU\times_{\cY}\cY^a \hookrightarrow \cU\times_{\cO}\cO/\varpi^a$
induced by~(\ref{eqn:closed immersion})
{\em is} an isomorphism.

We begin by establishing a simple lemma.
         For any $a \geq 1$,
   we have the $2$-commutative diagram
   \anumequation
   \label{eqn:base-change}
   \xymatrix{
   \cX^a \ar[r] \ar[d] & \cY^a \ar[d] \\
   \cX \ar[r] & \cY 
 }
         \end{equation}
   Similarly, if $b \geq a \geq 1$,
   then we have the $2$-commutative diagram
   \anumequation
   \label{eqn:base-change bis}
   \xymatrix{
   \cX^a \ar[r] \ar[d] & \cY^a \ar[d] \\
   \cX^b \ar[r] & \cY^b 
 }
         \end{equation}

   \begin{alemma}
     \label{lem:cartesian diagrams}
     Each of the diagrams~{\em (\ref{eqn:base-change})}
     and~{\em (\ref{eqn:base-change bis})} 
     is $2$-Cartesian.
   \end{alemma}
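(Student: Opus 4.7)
The plan is to treat the two diagrams in sequence, the second being an easy formal consequence of the first together with associativity of fibre products. The whole argument is essentially a matter of unwinding the definition of the~$\cY^a$ and using that closed immersions absorb fibre products through which the source already factors.

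For diagram~(\ref{eqn:base-change}): the key point is that, by its very construction, $\cY^a$ is a closed substack of $\cY\otimes_\cO \cO/\varpi^a$, and moreover the morphism $\cX^a\to\cY\otimes_\cO\cO/\varpi^a$ factors through $\cY^a$. (Indeed, choosing $b\ge a$ large enough that $\cX^a\to \cY$ factors through $\cY\otimes_\cO\cO/\varpi^b$, the map $\cX^a\to\cY\otimes_\cO\cO/\varpi^b$ factors through $\cY\otimes_\cO\cO/\varpi^a$ because $\varpi^a$ acts as zero on $\cX^a$, and it then factors through $\cY^a$ by the defining property of the scheme-theoretic image.) Given this, I would compute
\[
\cX\times_\cY \cY^a \;=\; \cX\times_\cY(\cY\otimes_\cO\cO/\varpi^a)\times_{\cY\otimes_\cO\cO/\varpi^a}\cY^a \;=\; \cX^a\times_{\cY\otimes_\cO\cO/\varpi^a}\cY^a,
\]
and since $\cX^a\to\cY\otimes_\cO\cO/\varpi^a$ already factors through the closed immersion $\cY^a\hookrightarrow\cY\otimes_\cO\cO/\varpi^a$, the last fibre product is simply $\cX^a$. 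The identification is compatible with the natural morphisms, so~(\ref{eqn:base-change}) is $2$-Cartesian.

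For diagram~(\ref{eqn:base-change bis}): applying the first part at levels~$a$ and~$b$ and using associativity of fibre products gives
\[
\cX^b\times_{\cY^b}\cY^a \;=\; (\cX\times_\cY \cY^b)\times_{\cY^b}\cY^a \;=\; \cX\times_\cY \cY^a \;=\; \cX^a,
\]
where the second equality uses that $\cY^a\to\cY^b$ is (a closed immersion, hence in particular) a morphism to~$\cY^b$ compatible with the maps to~$\cY$. Again this isomorphism is compatible with the structural morphisms, so~(\ref{eqn:base-change bis}) is $2$-Cartesian, completing the proof.

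The argument is entirely formal; there is no substantive obstacle, since the only nontrivial input is the factorization $\cX^a\to\cY^a\hookrightarrow\cY\otimes_\cO\cO/\varpi^a$, which is built into the construction of the~$\cY^a$.
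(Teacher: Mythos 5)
Your argument is correct and is essentially the paper's own proof: both rest on the manifest identification $\cX\times_{\cY}(\cY\times_{\cO}\cO/\varpi^a)\cong\cX^a$ together with the fact that $\cY^a\hookrightarrow\cY\times_{\cO}\cO/\varpi^a$ is a closed immersion (hence a monomorphism) through which $\cX^a$ factors. The only cosmetic difference is that you deduce the second diagram formally from the first via transitivity of fibre products, whereas the paper repeats the same monomorphism argument; both are fine.
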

   \begin{proof}
    We may embed the diagram~(\ref{eqn:base-change})
         in the larger $2$-commutative diagram
       $$\xymatrix{\cX^a \ar[r] \ar[d] & \cY^a \ar^-{\text{(\ref{eqn:closed
               immersion})}}[r] \ar[d] &\cY\otimes_{\cO}
         \cO/\varpi^a \ar[d] \\ \cX \ar[r] &\cY \ar@{=}[r]& \cY}$$
       Since the outer rectangle is manifestly $2$-Cartesian,
       and since~(\ref{eqn:closed immersion}) is a closed immersion (and thus
       a monomorphism), we conclude that~(\ref{eqn:base-change}) is indeed
       $2$-Cartesian.  

       A similar argument shows that~(\ref{eqn:base-change bis}) 
       is $2$-Cartesian.
   \end{proof}

         We next note that,
   since each of the closed immersions $\cY^a \hookrightarrow \cY$
   is a thickening, giving an open substack $\cU \hookrightarrow \cY$
   is equivalent to giving an open substack $\cU^a \hookrightarrow \cY^a$
   for some, or equivalently, every, choice of $ a\geq 1$;
   the two pieces of data are related by the formulas
   $\cU^a := \cU\times_{\cY} \cY^a$
   and $\varinjlim_a \cU^a \iso \cU.$

   \begin{lemma}
     \label{prop:opens}
     Suppose that $\cX \to \cY$ is proper.
     If $\cU$ is an open substack of~$\cY$,
     then the following conditions are equivalent:
     \begin{enumerate}
       \item
         The morphism $\cX\times_{\cY} \cU \to \cU$
         is a monomorphism.
       \item
         The morphism $\cX\times_{\cY} \cU \to \cU$
         is an isomorphism.
       \item   For every $a \geq 1$,
         the morphism $\cX^a\times_{\cY^a} \cU^a \to
         \cU^a$
         is a monomorphism.
       \item   For every $a \geq 1$,
         the morphism $\cX^a\times_{\cY^a} \cU^a \to
         \cU^a$
         is an isomorphism.
       \item   For some $a \geq 1$,
         the morphism $\cX^a\times_{\cY^a} \cU^a \to
         \cU^a$
         is a monomorphism.
       \item   For some $a \geq 1$,
         the morphism $\cX^a\times_{\cY^a} \cU^a \to
         \cU^a$
         is an isomorphism.
       \end{enumerate}
       Furthermore, if these equivalent conditions hold,
       then the closed immersion  $\cU^a \hookrightarrow
       \cU\times_{\cO} \cO/\varpi^a$ is an isomorphism,
       for each $a \geq 1$.
   \end{lemma}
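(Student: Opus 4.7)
The plan is to view all the morphisms appearing in conditions~(1)--(6) as base-changes of a single morphism $f\colon \cV \to \cU$, where $\cV := \cX\times_\cY\cU$. Indeed, by Lemma~\ref{lem:cartesian diagrams}, for each $a \ge 1$ we have $\cX^a\times_{\cY^a}\cU^a = \cX\times_\cY\cU^a = \cV\times_\cU\cU^a$, so the morphism appearing in conditions~(3)--(6) is precisely $f^a := f\times_\cU\cU^a$. I note that $f$ is proper (being the base-change of the proper morphism $\cX\to\cY$), and is scheme-theoretically dominant (since $\cU\to\cY$ is a flat quasi-compact open immersion, and scheme-theoretic dominance of quasi-compact morphisms is preserved by flat base-change);\ the same properties descend to each~$f^a$. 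Given these observations, the equivalences (1)$\Leftrightarrow$(2), (3)$\Leftrightarrow$(4), and (5)$\Leftrightarrow$(6) each follow from the standard facts that a proper monomorphism is a closed immersion, and that a scheme-theoretically dominant closed immersion is an isomorphism. The implications (2)$\Rightarrow$(4)$\Rightarrow$(6) are immediate from base-change of isomorphisms and the vacuous implication ``for all'' $\Rightarrow$ ``for some''.

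The main content of the lemma, and the principal obstacle, is the reverse implication (6)$\Rightarrow$(2). To prove it, suppose $f^a$ is an isomorphism for some $a \ge 1$. Since the transition morphisms $\cU^b\hookrightarrow\cU^{b'}$ and $\cV^b\hookrightarrow\cV^{b'}$ (for $b\le b'$) are thickenings, they induce homeomorphisms on underlying topological spaces, so $|f|$ coincides with $|f^a|$ and is in particular a homeomorphism. Thus $f$ has singleton fibers, and being proper, is a finite (hence affine) morphism, determined by the coherent $\cO_\cU$-algebra $f_*\cO_\cV$. The map $\cO_\cU \to f_*\cO_\cV$ is injective by scheme-theoretic dominance;\ let $Q$ denote its cokernel. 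Since pushforward under affine morphisms commutes with arbitrary base-change, the restriction of $\cO_\cU\to f_*\cO_\cV$ to $\cU^a$ agrees with $\cO_{\cU^a}\to f^a_*\cO_{\cV^a}$, which is an isomorphism by hypothesis;\ thus $Q|_{\cU^a}=0$. The Nakayama step --- which I expect to be the most delicate part --- is then to conclude that $Q=0$ globally:\ one views $Q$ as an inverse system $\{Q^b\}$ of coherent sheaves on the $\cU^b$, and for each $b\ge a$ uses that the ideal $I$ of $\cU^a$ in $\cU^b$ is nilpotent (as $\cU^a\hookrightarrow\cU^b$ is a thickening of Noetherian algebraic stacks) to iterate the equation $Q^b = IQ^b$ and deduce $Q^b=0$. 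Hence $Q=0$ and $f$ is an isomorphism.

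Finally, for the ``furthermore'' clause, once $f$ is known to be an isomorphism, base-changing along $\cO\to\cO/\varpi^a$ yields an isomorphism $\cV\times_\cO\cO/\varpi^a \cong \cU\times_\cO\cO/\varpi^a$. Since $\cV$ is an open substack of the $\varpi$-adic formal algebraic stack $\cX$, it is itself $\varpi$-adic, and Lemma~\ref{lem:cartesian diagrams} gives $\cV\times_\cO\cO/\varpi^a = \cV\times_\cX\cX^a = \cV\times_\cY\cY^a = \cV\times_\cU\cU^a = \cV^a$, which is identified with $\cU^a$ via $f^a$. Combining, the closed immersion $\cU^a\hookrightarrow\cU\times_\cO\cO/\varpi^a$ must be an isomorphism, as required.
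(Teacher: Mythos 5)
Your reductions to the single morphism $f\colon \cV=\cX\times_{\cY}\cU\to\cU$ and its base-changes $f^a=f\times_{\cU}\cU^a$ (via Lemma~\ref{lem:cartesian diagrams}), the ``proper $+$ scheme-theoretically dominant $+$ monomorphism $\Rightarrow$ isomorphism'' argument at each finite level, and the treatment of the final clause all match what the paper does. But the step you yourself identify as the main content, (6)$\Rightarrow$(2), has a genuine gap: you argue that since $|f|=|f^a|$ is a homeomorphism, $f$ is proper with singleton fibres and hence finite, hence affine. This inference is valid for schemes and algebraic spaces, but $\cX\to\cY$ (and therefore $f$) is only known to be \emph{algebraic}, i.e.\ representable by algebraic stacks, not by algebraic spaces; for such morphisms ``proper with singleton topological fibres'' does not imply finite or affine. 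For instance $BG\to\Spec k$, with $G$ a finite group scheme, is proper and a homeomorphism on underlying topological spaces, yet is not finite, not affine, and not a monomorphism. The point is that topological information cannot see the relative inertia, and controlling the inertia is exactly what the monomorphism conditions are about. Knowing that $f^a$ is an isomorphism for one $a$ trivialises the relative inertia over $\cU^a$, but to conclude anything about $f^b$ for $b>a$ (equivalently about $f$) you must propagate this across the thickenings $\cU^a\hookrightarrow\cU^b$; this is precisely the content of the paper's equivalence of (3) and (5), proved via the analogue for algebraic stacks of \cite[\href{http://stacks.math.columbia.edu/tag/09ZZ}{Tag 09ZZ}]{stacks-project}, and your proposal never supplies a substitute for it. Without it, the coherent-sheaf argument (viewing $f$ as $\Spec$ of the algebra $f_*\cO_{\cV}$, taking the cokernel $Q$, and applying Nakayama along the nilpotent ideal of $\cU^a$ in $\cU^b$) cannot get started, because it presupposes that $f$ is affine, in particular representable.

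If you repair this by first proving that ``monomorphism at level $a$'' implies ``monomorphism at every level $b$'' across the thickenings, then at each level properness plus scheme-theoretic dominance (flat base-change of the quasi-compact, scheme-theoretically dominant $\cX^b\to\cY^b$ along the open immersion $\cU^b\hookrightarrow\cY^b$) already gives that each $f^b$ is an isomorphism, and passing to the inductive limit gives (2) directly — at which point your Nakayama computation is no longer needed. In other words, once the missing thickening lemma is supplied, your route collapses to the paper's proof; as written, the proposal omits exactly the one step that carries the content. A secondary caution: your direct proof of (1)$\Rightarrow$(2) at the formal level invokes ``proper monomorphism is a closed immersion'' and the sheaf $f_*\cO_{\cV}$ for morphisms of formal algebraic stacks, where these notions need more care; the paper avoids this by deducing (1)$\Rightarrow$(3) by base change and only taking inductive limits of isomorphisms at the end.
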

   \begin{proof}
     The key point is that 
     Lemma~\ref{lem:cartesian diagrams} implies that the diagram
     $$\xymatrix{\cX^a\times_{\cY^a}\cU^a \ar[r] \ar[d] & \cU^a \ar[d] \\
       \cX\times_{\cY} \cU \ar[r] & \cU }
     $$
     is $2$-Cartesian, for any $a \geq 1$,
     and similarly, that if $b \geq a \geq 1,$ then the diagram
     $$\xymatrix{\cX^a \times_{\cY^a}\cU^a\ar[r] \ar[d] & \cU^a \ar[d] \\
       \cX^b\times_{\cY^b} \cU^b \ar[r] & \cU^b }
     $$
     is $2$-Cartesian.
     Since the vertical arrows of this latter diagram
     are finite order thickenings,
     we find (by applying the analogue
     of~\cite[\href{http://stacks.math.columbia.edu/tag/09ZZ}{Tag 09ZZ}]{stacks-project}
     for algebraic stacks, whose straightforward deduction
     from that result we leave to the reader)
     that the top horizontal arrow
     is a monomorphism
     if and only if the bottom horizontal arrow is.
     This shows the equivalence of~(3) and~(5).
     Since the morphism $\cX\times_{\cY} \cU \to \cU$
     is obtained as the inductive limit
     of the various morphisms $\cX^a\times_{\cY^a} \cU^a
     \to \cU^a,$
     we find that~(3) implies~(1) (by applying e.g.\
     \cite[Lem.~4.11~(1)]{Emertonformalstacks}, which
     shows that the inductive limit of monomorphisms
     is a monomorphism),
     and also that~(4) implies~(2) (the inductive
     limit of isomorphisms being again an isomorphism).

     Conversely, if~(1) holds, then the base-changed
                 morphism 
     $$\cX\times_{\cY} (\cU\times_{\cO} \cO/\varpi^a) \to \cU
     \times_{\cO} \cO/\varpi^a$$
     is a monomorphism. 
     The source of this morphism admits an alternative description
     as $\cX^a \times_{\cY} \cU,$ which the
     $2$-Cartesian diagram
     at the beginning of the proof allows us to identify
     with 
     $\cX^a\times_{\cY^a} \cU^a$. 
     Thus we obtain a monomorphism
     $$\cX^a\times_{\cY^a} \cU^a \hookrightarrow \cU\times_{\cO}
     \cO_/\varpi^a.$$

     Since this monomorphism factors through
     the closed immersion $\cU^a \hookrightarrow \cU
     \times_{\cO} \cO/\varpi^a,$
     we find that each of the morphisms of~(3) is a monomorphism;
     thus~(1) implies~(3).  Similarly, (2)~implies~(4),
     and also implies 
     that the closed immersion
     $\cU^a \hookrightarrow \cU\times_{\cO} \cO/\varpi^a$
     is an isomorphism, for each $a \geq 1$.

     Since clearly (4) implies~(6),
     while (6) implies~(5), to complete the proof
     of the proposition, it suffices to show that
     (5) implies~(6).
     Suppose then that
     $\cX^a\times_{\cY^a} \cU^a \to \cU^a$
     is a monomorphism.  Since $\cU^a \hookrightarrow \cY^a$
     is an open immersion, it is in particular flat.  Since
     $\cX^a \to \cY^a$ is scheme-theoretically dominant and
     quasi-compact (being proper), any flat base-change of this
     morphism is again scheme-theoretically dominant,
     as well as being proper.
     Thus we see that $\cX^a\times_{\cY^a} \cU^a \to \cU^a$ is a
     scheme-theoretically dominant proper monomorphism,
     i.e.\ a scheme-theoretically dominant closed immersion,
     i.e.\ an isomorphism, as required.
   \end{proof}

\bibliographystyle{amsalpha-custom}
\bibliography{dieudonnelattices}
\end{document}

%%% Local Variables:
%%% mode: latex
%%% TeX-master: t
%%% End: